\documentclass[11pt]{article}

\usepackage[letterpaper,margin=1in]{geometry}
\usepackage[T1]{fontenc}
\usepackage[utf8]{inputenc}
\usepackage{tgtermes}
\usepackage{newtxtext}
\usepackage{microtype}

\usepackage{amsmath,amsthm,mathtools}
\usepackage{newtxmath}
\usepackage{bm}
\usepackage{graphicx}
\usepackage{booktabs}
\usepackage{array}
\usepackage{xcolor}
\usepackage{algorithm}
\usepackage{algpseudocode}
\usepackage{tikz}
\usepackage[authoryear,round]{natbib}
\usepackage[colorlinks=true,linkcolor=blue,citecolor=blue,urlcolor=blue]{hyperref}

\allowdisplaybreaks
\theoremstyle{plain}
\newtheorem{theorem}{Theorem}
\newtheorem{lemma}{Lemma}
\newtheorem{proposition}{Proposition}

\newtheorem{assumption}{Assumption}

\theoremstyle{definition}

\theoremstyle{remark}
\newtheorem{remark}{Remark}

\newcounter{savedassumption}
\newenvironment{alphabetassumptions}{%
    \setcounter{savedassumption}{\value{assumption}}%
    \setcounter{assumption}{0}%
}{%
    \setcounter{assumption}{\value{savedassumption}}%
}

\newcommand{\Tr}{\operatorname{Tr}}

\title{High-Dimensional Interpolators Can Be Fragile:\\
Heavy Tails and High-Dimensional Large Deviations}

\author{%
Youheng Zhu\\
Department of Industrial Engineering and Management Sciences\\
Northwestern University\\
\texttt{youhengzhu@u.northwestern.edu}
\and
Yiping Lu\\
Department of Industrial Engineering and Management Sciences\\
Northwestern University\\
\texttt{yiping.lu@northwestern.edu}
}

\date{}

\begin{document}

\maketitle

\begin{abstract}
High-dimensional interpolation is common in modern machine learning, but its tail risk is less understood than its expected prediction risk. Existing theory shows that interpolating models can perform well in expectation, yet such guarantees do not determine the probability of rare, severe errors. In operations research and stochastic decision-making applications, rare estimation errors can have disproportionate downstream effects, so tail behavior matters alongside average performance.

We study the fragility of high-dimensional linear interpolators using large-deviation methods. We focus on ridgeless regression and compare it with ridge-regularized estimators. We first show that the risk of ridgeless regression can exhibit heavy-tailed behavior: although its expected risk may remain well controlled, its upper tail can decay much more slowly than that of regularized alternatives. We then quantify this phenomenon at the level of large-deviation rates. In the regime we study, ridge regularization suppresses fixed right-tail deviations at the \(n^2\) scale, whereas ridgeless regression has only \(n\log n\)-scale decay, where \(n\) is the sample size. This gap shows that interpolation can be statistically fragile even when it is accurate on average. Thus regularization affects the frequency of rare, high-impact risk events in addition to the usual bias-variance tradeoff.
\end{abstract}

\noindent\textbf{Keywords:} ridgeless regression, ridge regression, heavy tails, large deviations, random matrix theory


\section{Introduction}

Recent work has sought to explain why predictors that interpolate the training data can still generalize well \citep{belkin2019reconciling,mallinar2022benign,zhang2021understanding}. This phenomenon runs counter to the classical view that interpolation should worsen out-of-sample performance. In high-dimensional regression, overparameterized estimators \citep{bartlett2020benign,belkin2018overfitting,belkin2021fit,cao2022benign,cheng2022memorize,hastie2022surprises,mei2022generalization}, including ridgeless and minimum-norm interpolators \citep{hastie2022surprises}, can achieve favorable expected prediction risk and, in some regimes, match or even outperform regularized alternatives. Thus, interpolation need not be harmful on average.

However, average risk is not the only criterion that matters. In many operations research, stochastic modeling, and data-driven decision-making settings, the reliability of an estimator depends not only on its expected performance but also on the probability of rare but severe failures \citep{duffie1997overview,artzner1999coherent}. An estimator with good average risk may still be fragile if its risk has a heavy upper tail. This fragility is especially relevant when predictions feed into downstream optimization, resource allocation, inventory control, pricing, queueing, or policy decisions~\citep{bertsimas2020predictive, elmachtoub2022smart, donti2017task}: rare estimation errors can be amplified by subsequent decisions and lead to substantial operational losses~\citep{smith2006optimizer}. This motivates the following question:

\textit{Does the risk of high-dimensional linear interpolators concentrate tightly around its typical value, or can it exhibit rare but extreme deviations? More broadly, can we characterize the risk distribution of interpolating estimators beyond expected risk?}

While existing theory has largely characterized expected risk and deterministic equivalents for high-dimensional ridge and ridgeless regression, recent distributional work has begun to go beyond mean behavior for these estimators \citep{han2026distribution}. However, what remains less understood is the tail behavior of the risk, especially the probability of fixed rare-risk events and the scale at which these probabilities decay.

We study this question at two complementary levels. First, we analyze fixed-dimensional tail asymptotics and identify the polynomial right tail generated by near-dependence, equivalently by one eigenvalue approaching the hard edge. We then turn to proportional asymptotics and use large-deviation methods to determine the decay rates of fixed rare-risk events \citep{dembo2009large,varadhan2010large}. This framework allows us to compare ridgeless interpolation and ridge regularization at the level of tail robustness, rather than only through average risk.

Our focus is high-dimensional \textit{ridgeless regression}, a canonical model of interpolation in the overparameterized regime. Ridgeless regression fits the training data without explicit regularization and is central to the modern theory of benign overfitting. We compare it with ridge regression, which introduces explicit regularization and therefore does not generally interpolate. This comparison isolates the role of regularization in controlling rare risk events.

The main results of this paper are summarized as follows.
Let \(R_n\) denote the ridgeless population risk and let \(r_\star\) be its
limiting predictive risk, defined in
Section~\ref{sec:prelim}. Existing high-dimensional results on asymptotic predictive risk for least squares and random-feature regression show that, in proportional regimes, risk is governed at first order by deterministic spectral limits predicted by the Marchenko--Pastur law \citep{dobriban2018high,hastie2022surprises,mei2022generalization}. Intuitively, the high-dimensional ridgeless risk fluctuates around the limiting predictive risk \(r_\star\) when \(p\) and \(n\) grow proportionally. In our notation, the risk decomposes exactly as
\[
    R_n=b_n+H_n,
    \qquad
    H_n=\frac1n\sum_i\lambda_{i,n}^{-1},
\]
where \(b_n\) is the deterministic bias and \(H_n\) is the random positive-spectrum inverse trace.
This separation is useful because all right-tail randomness studied below is carried by \(H_n\), whereas \(b_n\) only shifts the risk threshold. We obtain three results: a fixed-dimensional polynomial right tail, an $n\log n$ right-tail asymptotic, and a Gaussian left-tail LDP with speed $n^2$.

First, at fixed dimension, ridgeless risk has a genuine polynomial right tail.
Under the bounded-density
and nondegeneracy assumptions stated in Section~\ref{subsec:distributional-assumptions}, our Theorem~\ref{thm:fixed-dimensional-risk-tail} shows that
\[
    \mathbb P(R_n>b_n+x)=x^{-(|p-n|+1)/2+o(1)},
    \quad x\to\infty .
\] 
This shows that the population risk of ridgeless regression has polynomial right-tail decay.

This fixed-dimensional tail behavior raises a high-dimensional question: does the same hard-edge mechanism determine the large-deviation speed of a fixed right-tail deviation?

This is answered by our second result, which states that in the proportional regime \(p_n/n\to\gamma\in(0,\infty)\setminus\{1\}\),
ridgeless regression has only \(n\log n\)-scale large deviation right-tail decay. Under our
general-entry assumptions, for every fixed \(\delta>0\), Theorem~\ref{thm:ridgeless-right-tail} shows that
\[
    \lim_{n\to\infty}
    \frac1{n\log n}
    \log\mathbb P(R_n>r_\star+\delta)
    =
    -\frac{|1-\gamma|}{2}.
\]
The constant is independent of \(\delta\). The mechanism of this rate is the
existence of a single positive eigenvalue reaching the microscopic scale
\(n^{-1}\), which directly gives the lower bound. The matching upper bound
proves that no cheaper mechanism exists. It separates eigenvalues above and
below a shrinking hard-edge cutoff: the nonmicroscopic contribution is
controlled by a bounded-Lipschitz spectral statistic, while the
microscopic contribution is controlled by a hard-edge counting estimate for the
\(k\)-th ordered eigenvalue. This estimate is proved using projection
small-ball estimates~\citep{nguyen2018random} and a Grassmannian covering
argument, and a weighted pigeonhole step identifies a single microscopic
eigenvalue as the leading mechanism.

Our third result shows that the Gaussian left tail remains an \(n^2\)-scale spectral large-deviation
event. For Gaussian entries, Theorem~\ref{thm:ridgeless-left-tail} shows that
\[
    \lim_{n\to\infty}\frac1{n^2}\log\mathbb P(R_n\le x)
    =
    -\mathcal J_{\kappa_\gamma,\rho_\gamma}
    \left(\frac{x-b_\gamma}{\kappa_\gamma}\right),
\]
where \(\mathcal J\) is the left-tail contraction rate induced by the
inverse-moment functional, as defined in Section~\ref{sec:gaussian-left-tail}, Theorem~\ref{thm:ridgeless-left-tail}. In the nontrivial regime
\(b_\gamma<x<r_\star\) where \(b_n\to b_\gamma\), this gives an \(n^2\)-speed left-tail decay. Thus the typical risk separates two
different mechanisms: the left tail is produced by a collective deformation of
the empirical spectral distribution, while the right tail is produced by a
single hard-edge eigenvalue.

This behavior contrasts sharply with ridge regression. For every fixed
\(\lambda>0\), the ridge risk is a bounded continuous spectral functional. Hence
in the Gaussian case the Wishart spectral large deviation principle and the contraction principle yield an LDP for ridge risk with speed \(n^2\), and under our general-entry assumptions the
ridge right tail is suppressed at the \(n^2\) scale, as shown in Theorem~\ref{thm:gaussian-ridge-risk-ldp} and Proposition~\ref{prop:general-ridge-right-tail-upper}. Therefore fixed right-tail
events satisfy
\[
    \mathbb P(R_{n,\lambda}>r_{\lambda,\star}+\delta)
    =
    \exp\{-\Omega(n^2)\},
    \qquad
    \mathbb P(R_n>r_\star+\delta)
    =
    \exp\{-\Theta(n\log n)\}.
\]
This gap in the large deviation speed quantifies the tail fragility created by removing ridge regularization.

We also show that the lower-bound mechanism is not specific to the minimum-norm
choice. In the overparameterized regime, any exact linear interpolator must fit
the training noise in the row space of the design. This forces an
algorithm-independent inverse-trace lower bound and yields both a
fixed-dimensional polynomial lower bound and a proportional \(n\log n\)-scale
lower bound for arbitrary exact linear interpolating algorithms. The matching
upper bounds remain specific to the minimum-norm estimator.

\paragraph{Roadmap.}
The rest of the paper proves these statements. Section~\ref{sec:prelim}
introduces the regression model, notation for the random matrix and its spectrum,
the distributional assumptions, and the Wishart spectral large deviation principle. 
Section~\ref{sec:fixed-dimensional-tail} proves the fixed-dimensional
polynomial heavy-tail theorem. Section~\ref{sec:proportional-right-tail} proves
the sharp \(n\log n\)-scale right-tail asymptotic by matching a
single-eigenvalue lower bound with an upper bound excluding the combined
nonmicroscopic contribution and multi-eigenvalue hard-edge mechanisms.

Section~\ref{sec:gaussian-left-tail}
proves the Gaussian \(n^2\)-scale left-tail result. 
Section~\ref{sec:ridge-right-tail} records the \(n^2\)-scale
right-tail suppression for ridge regression, in contrast to the results in
Section~\ref{sec:proportional-right-tail}.
Section~\ref{sec:beyond-ridgeless} extends the lower-bound mechanism beyond
ridgeless regression to arbitrary exact linear interpolating algorithms.
Section~\ref{sec:numerics} presents numerical experiments. The proofs of
auxiliary estimates are collected in
Section~\ref{sec:proofs-auxiliary-estimates}.

These findings complement the recent theory of high-dimensional interpolators: expectation-based results show that interpolation can be benign in an average sense, while our results show that this benign behavior need not extend to tail risk.

\section{Preliminaries and notation}
\label{sec:prelim}

This section sets up the regression model and the random matrix notation used throughout the paper. The main point is to reduce the population risk of ridgeless regression to a singular spectral statistic of the sample covariance matrix. In this representation, the Marchenko--Pastur law gives the deterministic first-order prediction for the risk, while the large-deviation questions studied in this paper concern the probability that the realized risk deviates from this prediction.

\subsection{Ridge and ridgeless regression}

We consider the linear model $y=X\beta+\varepsilon$,
where \(X\in \mathbb R^{n\times p}\) is the design matrix, \(\varepsilon\sim N(0,I_n)\) is independent noise, and the entries of \(X\) are independent copies of a mean-zero, variance-one random variable \(\xi\). Following \citet{dobriban2018high} and \citet{hastie2022surprises}, we work in the high-dimensional proportional asymptotic regime
\(\gamma_n:=\frac{p}{n}\to \gamma\in (0,\infty)\setminus\{1\}.\) The case \(\gamma<1\) is underparameterized, while \(\gamma>1\) is overparameterized.

We also adopt the dense random effects convention of \cite{dobriban2018high}, where we assume the coefficient $\beta$ is sampled from a distribution with $\mathbb E\beta=0,
\mathbb E[\beta\beta^\top]=\frac{\alpha^2}{p}I_p.$
Here \(\alpha^2=\mathbb E\|\beta\|_2^2\) is the signal strength, while the noise variance is normalized to one. This convention keeps the signal norm of order one as \(p\) grows and makes the conditional prediction risk a function only of the realized design matrix. It also isolates the spectral source of risk fluctuations. In the overparameterized regime, a setting with fixed coefficient sequence \(\beta_n\) would introduce the projection bias \(\|(I-P_{\operatorname{row}(X)})\beta_n\|_2^2\); the random effects risk replaces this by its isotropic average \(\alpha^2(1-\gamma_n^{-1})\). This replacement affects only the bounded and converging bias contribution and does not change the inverse trace mechanism responsible for the tail behavior below.

For \(\lambda>0\), the ridge estimator is $\widehat\beta_\lambda
=
\arg\min_{b\in\mathbb R^p}
\{\|Xb-y\|_2^2+n\lambda\|b\|_2^2\}
=
(X^\top X+n\lambda I_p)^{-1}X^\top y .$ The ridgeless estimator is obtained by removing the explicit penalty:
\[
\widehat\beta_0
=
(X^\top X)^+X^\top y
=
\lim_{\lambda\downarrow 0}\widehat\beta_\lambda .
\]
It is the minimum Euclidean-norm least-squares solution. When \(p>n\), this estimator interpolates the training data almost surely under continuous entry distributions.

Let \(x_0\) be an independent test point with \(\mathbb E x_0x_0^\top=I_p\). We define the conditional population risk $R_{n,\lambda}(X)
:=
\mathbb E\left[
\{x_0^\top(\widehat\beta_\lambda-\beta)\}^2
\mid X
\right],$ where the expectation is over \(\beta,\varepsilon\), and \(x_0\), conditional on \(X\). This definition compares the fitted linear predictor with the noiseless population signal, so the irreducible test noise is not included. Writing \(\widehat\Sigma:=\frac{1}{n}X^\top X\), a standard bias-variance calculation gives
\begin{equation}
\label{eq:ridge-risk-prelim}
R_{n,\lambda}(X)
=
\frac{\alpha^2\lambda^2}{p}
\operatorname{Tr}(\widehat\Sigma+\lambda I_p)^{-2}
+
\frac{1}{n}
\operatorname{Tr}\!\left[
\widehat\Sigma(\widehat\Sigma+\lambda I_p)^{-2}
\right].
\end{equation}
Equivalently,
\begin{equation}
\label{eq:ridge-risk-equivalent-prelim}
R_{n,\lambda}(X)
=
\frac{\gamma_n}{p}
\operatorname{Tr}(\widehat\Sigma+\lambda I_p)^{-1}
+
\frac{\lambda(\lambda\alpha^2-\gamma_n)}{p}
\operatorname{Tr}(\widehat\Sigma+\lambda I_p)^{-2}.
\end{equation}

Taking \(\lambda\downarrow 0\) gives the ridgeless risk. If \(p<n\), then \(\widehat\Sigma\) is invertible almost surely and
\(
R_n(X)=\frac{1}{n}\operatorname{Tr}\widehat\Sigma^{-1}.
\)
If \(p>n\), then \(X^\top X/n\) has \(p-n\) zero eigenvalues, and the positive spectrum is instead represented by \(XX^\top/n\). In this case,
\(
R_n(X)
=
\alpha^2\left(1-\frac{1}{\gamma_n}\right)
+
\frac{1}{n}\operatorname{Tr}
\left(\frac{1}{n}XX^\top\right)^{-1}.\) Thus in both regimes, if \(0<\lambda_1\le \cdots\le \lambda_q\), \(q=\min\{n,p\}\), are the positive eigenvalues of the relevant normalized covariance matrix, then
\begin{equation}
\label{eq:ridgeless-risk-prelim}
R_n(X)
=
b_n+
\frac{1}{n}\sum_{i=1}^q \lambda_i^{-1},
\qquad
b_n=
\begin{cases}
0, & p<n,\\
\alpha^2(1-\gamma_n^{-1}), & p>n.
\end{cases}
\end{equation}
Equation~\eqref{eq:ridgeless-risk-prelim} reduces the random part of ridgeless risk to an inverse spectral moment. Ridge regression depends on the regularized spectral functions \((u+\lambda)^{-1}\) and \((u+\lambda)^{-2}\), which remain bounded near zero for fixed \(\lambda>0\). Ridgeless regression instead depends on the singular function \(u^{-1}\). Consequently, small positive eigenvalues can dominate the upper tail of the risk.

\subsection{Spectrum notation and the Marchenko--Pastur law}
\label{sec:random-matrix-4-ridgeless}

We now introduce the random matrix notation independently of the regression model. Let \(G_n\in\mathbb R^{N_n\times q_n}\), \(N_n\ge q_n\), have independent entries with mean zero and variance one. Define
\begin{equation}
\label{eq:define-rn}
r_n:=N_n-q_n,\qquad
A_n:=\frac{1}{n}G_n^\top G_n,\qquad
0<\lambda_{1,n}\le \cdots\le \lambda_{q_n,n},\qquad
\widehat\mu_n
:=
\frac{1}{q_n}
\sum_{i=1}^{q_n}\delta_{\lambda_{i,n}} .
\end{equation}
Here \(\lambda_{i,n}\) are the eigenvalues of \(A_n\), and
\(\widehat\mu_n\) is the empirical positive spectral distribution.
We assume the proportional rectangular asymptotic regime
\(\frac{q_n}{n}\to \kappa\in(0,\infty),\frac{r_n}{n}\to \rho\in[0,\infty).\)
Equivalently, \(N_n/n\to \kappa+\rho\). Under standard moment conditions, \(\widehat\mu_n\) converges weakly to the Marchenko--Pastur law \(\mu_{\kappa,\rho}\) almost surely \citep{marvcenko1967distribution, bai2010spectral, pillai2014universality}, supported on
\(
[\ell_{\kappa,\rho},L_{\kappa,\rho}],\) where \(
\ell_{\kappa,\rho}:=(\sqrt{\kappa+\rho}-\sqrt{\kappa})^2,
L_{\kappa,\rho}:=(\sqrt{\kappa+\rho}+\sqrt{\kappa})^2,\)
with density
\[
d\mu_{\kappa,\rho}(u)
=
\frac{1}{2\pi\kappa u}
\sqrt{(L_{\kappa,\rho}-u)(u-\ell_{\kappa,\rho})}
\mathbf 1_{[\ell_{\kappa,\rho},L_{\kappa,\rho}]}(u)\,du .
\]

The central spectral statistic considered in this paper is
\(
H_n
:=
\frac{1}{n}
\sum_{i=1}^{q_n}\lambda_{i,n}^{-1}
=
\frac{q_n}{n}
\int u^{-1}\,d\widehat\mu_n(u).\) We refer to \(H_n\) as the positive spectrum inverse trace, or the inverse moment of the empirical spectral measure \(\widehat\mu_n\) scaled by \(q_n/n\). When \(\rho>0\), the lower edge \(\ell_{\kappa,\rho}\) is strictly positive, and the Marchenko--Pastur prediction for \(H_n\) is finite:
\(
h_{\kappa,\rho}
:=
\kappa\int u^{-1}\,d\mu_{\kappa,\rho}(u).
\)
Equivalently, \(h_{\kappa,\rho}=\kappa/\rho\), but we keep the integral notation because the proofs compare \(H_n\) with spectral functionals of \(\widehat\mu_n\).

The ridgeless regression model is obtained from this notation by taking the positive spectrum of the sample covariance matrix. If \(X_n\in\mathbb R^{n\times p_n}\) and \(p_n/n\to\gamma\in(0,\infty)\setminus\{1\}\), then
\begin{equation}
\label{eq:define-kappa_gamma}
N_n=\max\{n,p_n\},
q_n=\min\{n,p_n\},
r_n=|n-p_n|.
\end{equation}
Thus
\(
\kappa_\gamma:=\min\{1,\gamma\},
\rho_\gamma:=|1-\gamma|.\) Since \(\gamma\ne 1\), we have \(\rho_\gamma>0\), so the limiting lower spectral edge is positive. Combining the exact risk representation~\eqref{eq:ridgeless-risk-prelim} with the Marchenko--Pastur limit gives the limiting predictive risk
\begin{equation}
\label{eq:typical-risk-prelim}
r_\star
=
b_\gamma+h_{\kappa_\gamma,\rho_\gamma},\qquad b_\gamma
:=
\alpha^2(1-\gamma^{-1})_+ .
\end{equation}
The value \(r_\star\) is the deterministic first-order prediction for ridgeless risk in the proportional regime. The large-deviation results below ask how likely it is for the realized risk \(R_n=b_n+H_n\) to exceed or fall below this Marchenko--Pastur prediction.

\subsection{Gaussian Wishart ensembles and spectral large deviations}
\label{sec:Gaussian Wishart ensembles and spectral large deviations}

For Gaussian designs, the positive spectrum has an explicit joint density. Suppose \(G\in\mathbb R^{N\times q}\), \(N\ge q\), has independent \(N(0,1)\) entries, and set \(r=N-q\). For
\(
A=\frac{1}{n}G^\top G ,
\)
let \(0<\lambda_1\le \cdots\le \lambda_q\) be its eigenvalues. The unordered eigenvalue density is the Laguerre orthogonal ensemble density~\citep{james1964distributions, hiai1998eigenvalue, forrester2010log}
\begin{equation}
\label{eq:wishart-density-prelim}
p(\lambda_1,\ldots,\lambda_q)
=
\frac{1}{Z_{N,q,n}}
\prod_{i=1}^q
\lambda_i^{(r-1)/2}
e^{-n\lambda_i/2}
\prod_{1\le i<j\le q}
|\lambda_i-\lambda_j|,
\qquad
\lambda_i>0 .
\end{equation}
The factor \(\lambda_i^{(r-1)/2}\) describes the hard edge repulsion from zero, a classical feature of Gaussian Wishart spectra \citep{james1964distributions,edelman1988eigenvalues,forrester2010log}. This term is the finite-dimensional source of the polynomial small eigenvalue behavior and, in the proportional regime, of the \(n\log n\) cost of moving one positive eigenvalue to the microscopic hard edge. Related results for the smallest Wishart--Laguerre eigenvalues have been studied extensively in random matrix theory \citep{tao2010random,katzav2010large}.

We also use the Gaussian Wishart spectral large deviation principle. Suppose
\(
\frac{q}{n}\to\kappa\in(0,\infty),
\frac{r}{n}\to\rho>0.
\) Then the empirical spectral distribution \(\widehat\mu\) satisfies a large deviation principle on \(\mathcal P(\mathbb R_+)\), endowed with the weak topology, with speed \(n^2\) and good rate function~\citep{hiai2000semicircle}
\begin{equation}
\label{eq:wishart-rate-prelim}
I_{\kappa,\rho}(\mu)
=
\frac{\kappa}{2}
\int (u-\rho\log u)\,d\mu(u)
-
\frac{\kappa^2}{2}
\iint \log|u-v|\,d\mu(u)d\mu(v)
-
C_{\kappa,\rho}.
\end{equation}
The constant \(C_{\kappa,\rho}\) is chosen so that \(\inf_{\mu\in\mathcal P(\mathbb R_+)} I_{\kappa,\rho}(\mu)=0 .\) With this normalization, the unique minimizer of \(I_{\kappa,\rho}\) is the Marchenko--Pastur law \(\mu_{\kappa,\rho}\). Thus the spectral large deviation principle refines the Marchenko--Pastur law: the Marchenko--Pastur measure gives the typical empirical spectrum, while \(I_{\kappa,\rho}\) quantifies the \(n^2\) scale cost of a collective deformation of the entire spectrum.


This spectral LDP will be used only for deviations that are visible at the level of the empirical spectral measure. In particular, for Gaussian designs, the left tail of the inverse-trace functional is governed by a macroscopic deformation of the empirical spectrum and has the natural \(n^2\)-scale variational rate suggested by the above LDP through the contraction principle. By contrast, the right tail is governed by the singularity of \(u^{-1}\) at the hard edge: a single eigenvalue at the microscopic scale \(n^{-1}\) changes the inverse trace by order one but does not change \(\widehat\mu\) at the same order in the weak topology. The right-tail analysis therefore requires separate hard-edge estimates rather than the \(n^2\)-speed spectral LDP.

\subsection{Distributional assumptions}
\label{subsec:distributional-assumptions}

The assumptions below reflect the two probabilistic mechanisms analyzed in this
paper. Assumption~\ref{ass:gaussian-entries} isolates the Gaussian Wishart
setting, where the exact eigenvalue density and the spectral large deviation
principle are available. This structure is used for the left tail, which is
driven by a collective deformation of the empirical spectrum. In contrast,
Assumption~\ref{ass:general-entries} covers the fixed-dimensional heavy tail and
the proportional right-tail results for general entry distributions. It is broad
enough for continuous, normalized, light-tailed covariates, as commonly arise
after preprocessing in high-dimensional statistical and data-driven decision
models. It includes Gaussian and other continuous subgaussian designs with
nondegenerate local density and log-Sobolev regularity, while excluding discrete or locally degenerate
designs for which the required small-ball and hard-edge estimates may fail.

Methodologically, Assumption~\ref{ass:general-entries} retains the Gaussian-type
regularity needed away from the hard edge: the log-Sobolev condition gives
uniform norm control and dimension-free Lipschitz concentration for bounded
spectral statistics. What is not retained is the exact Wishart eigenvalue
density. The replacement is a small-ball structure at the hard edge: bounded
density gives upper small-ball control, and nondegenerate local mass near the
origin gives the matching lower mass needed for the one-column near-dependence
mechanism.

\begin{alphabetassumptions}

\begin{assumption}[Gaussian entries]
\label{ass:gaussian-entries}
The entries of the relevant random matrix are i.i.d. \(N(0,1)\).
\end{assumption}

\begin{assumption}[General entries]
\label{ass:general-entries}
The entries of the relevant random matrix are i.i.d. copies of a real random variable \(\xi\) satisfying the following conditions:
\begin{enumerate}
    \renewcommand{\labelenumi}{\textup{B.\arabic{enumi}.}}
    \renewcommand{\theenumi}{B.\arabic{enumi}}

    \item \textbf{Normalization.} \(\mathbb E\xi=0\) and \(\mathbb E\xi^2=1\).

    \item \textbf{Bounded density.} The law of \(\xi\) has a density \(p_\xi\) satisfying
    \(\|p_\xi\|_\infty\le K_{\rm up}<\infty\).

    \item \textbf{Nondegeneracy near zero.} There exist \(u_0>0\) and \(k_{\rm low}>0\) such that
    \(p_\xi(u)\ge k_{\rm low}\) for all \(|u|\le u_0\).

    \item \textbf{Log-Sobolev inequality.} The law of \(\xi\) satisfies a logarithmic Sobolev
    inequality with constant \(C_{\rm LS}<\infty\).
\end{enumerate}
\end{assumption}

\end{alphabetassumptions}

\section{Fixed-dimensional polynomial right tail}
\label{sec:fixed-dimensional-tail}

In this section, we work in the fixed-dimensional regime: \(N\ge q\) are fixed, \(r:=N-q\),
and the tail level \(x\) tends to infinity.
Constants in this section may depend on \(N\), \(q\), and the density
parameters, but not on \(x\).

Let \(G\in\mathbb R^{N\times q}\) have i.i.d. entries satisfying the bounded-density
and nondegeneracy-near-zero conditions B.2--B.3 in
Assumption~\ref{ass:general-entries}. Set $W:=G^\top G$.
Since the entries have densities, \(G\) has full column rank almost surely.
In the ridgeless regression application one takes
\(N=\max\{n,p\}\), \(q=\min\{n,p\}\), and the risk differs from
\(\Tr(W^{-1})\) only by the deterministic contribution \(b\).

\subsection{The power-law tail of ridgeless risk}

At fixed dimension, a nearly dependent column creates a small eigenvalue and hence a large inverse-trace contribution. Theorem~\ref{thm:fixed-dimensional-risk-tail} quantifies the resulting polynomial right tail and its exponent \((r+1)/2\).

\begin{theorem}[Fixed-dimensional heavy tail of ridgeless risk]
\label{thm:fixed-dimensional-risk-tail}
\label{thm:fixed-dimensional-ridgeless-heavy-tail}
Fix the sample size and dimension. Suppose the entries in the ridgeless
regression design satisfy the bounded-density condition B.2 and the
nondegeneracy-near-zero condition B.3 in
Assumption~\ref{ass:general-entries}. In the
positive-spectrum representation of \eqref{eq:ridgeless-risk-prelim}, write
    $R=b+\Tr\{(G^\top G)^{-1}\}$
where \(b\) is the deterministic bias in the ridgeless risk formula.

Then
there exist constants \(0<c<C<\infty\) and \(x_0<\infty\) such that, for all
\(x\ge x_0\),
\[
    c x^{-(r+1)/2}
    \le
    \mathbb P(R>b+x)
    \le
    C x^{-(r+1)/2}.
\]
Equivalently, we have $ \lim_{x\to\infty}
    \frac{\log \mathbb P(R>b+x)}{\log x}
    =
    -\frac{r+1}{2}.$
\end{theorem}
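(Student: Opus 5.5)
The plan is to reduce the tail of \(\Tr(W^{-1})\) to the tail of the smallest singular value of \(G\), and then to control that through the distance from one column to the span of the others. Since \(\lambda_{\min}(W)^{-1}\le\Tr(W^{-1})\le q\,\lambda_{\min}(W)^{-1}\), it suffices to show
\[
\mathbb P\bigl(\lambda_{\min}(W)\le t\bigr)\asymp t^{(r+1)/2}
\]
as \(t\downarrow0\), which is the same as \(\mathbb P(s_{\min}(G)\le \epsilon)\asymp \epsilon^{r+1}\). We use the standard identity
\[
\Tr(W^{-1})=\sum_{j=1}^q \operatorname{dist}(g_j,H_j)^{-2},
\]
where \(g_j\) is the \(j\)-th column and \(H_j\) is the span of the other \(q-1\) columns, a subspace of dimension \(q-1\) in \(\mathbb R^N\). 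Its orthogonal complement \(H_j^\perp\) has dimension \(N-q+1=r+1\). Hence
\[
\mathbb P\bigl(\Tr(W^{-1})>x\bigr)\le \sum_j \mathbb P\bigl(\operatorname{dist}(g_j,H_j)<\sqrt{q/x}\bigr),
\]
and for the lower bound we use \(\mathbb P(\Tr(W^{-1})>x)\ge \mathbb P(\operatorname{dist}(g_1,H_1)<x^{-1/2})\).

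\textbf{Upper bound.} Condition on the other columns, so \(H_j\) is fixed, and let \(P\) be the orthogonal projection onto \(H_j^\perp\), of rank \(r+1\). We need a projection small-ball estimate:
\[
\mathbb P\bigl(\|Pg\|\le\epsilon\bigr)\le (CK_{\rm up}\epsilon)^{r+1}
\]
for every fixed rank-\((r+1)\) projection, where \(g\) has i.i.d. entries with density bounded by \(K_{\rm up}\). This is the Rudelson--Vershynin result: projections of vectors with independent bounded-density coordinates have density at most \((CK_{\rm up})^{r+1}\) on the \((r+1)\)-dimensional range. Taking \(\epsilon=\sqrt{q/x}\) gives \(\mathbb P(R>b+x)\le qC x^{-(r+1)/2}\). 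In fixed dimension the constant may depend on \(N\) and \(q\), so a cruder argument would also do, for instance integrating the density over an orthonormal basis adapted to \(P\).

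\textbf{Lower bound.} This step uses the nondegeneracy condition B.3. I would restrict to a good event \(E\) for the columns \(g_2,\dots,g_q\): they lie in a bounded region, and their span has a controlled geometry, for example it is close to a fixed subspace. Conditional on \(H_1\), we need \(\mathbb P(\|Pg_1\|\le\epsilon)\ge c\epsilon^{r+1}\).

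The simplest route avoids random geometry. Choose \(g_1\) to be nearly a combination of the other columns, \(g_1=\sum_{j\ge2} a_jg_j+w\) with \(w\) small, by exploiting positive density everywhere near zero. Concretely, require every entry of \(G\) to lie in \([-u_0,u_0]\), where the joint density is at least \(k_{\rm low}^{Nq}\). Inside this cube, consider the set \(S_\epsilon\) of matrices whose column-distance \(\operatorname{dist}(g_1,H_1)\) is below \(\epsilon\). For fixed \(g_2,\dots,g_q\) in a positive-measure subset where they are well conditioned, the set of admissible \(g_1\) in the cube contains a tube of radius \(\epsilon\) around a \((q-1)\)-dimensional piece of \(H_1\) with nontrivial extent. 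That tube has Lebesgue measure \(\gtrsim \epsilon^{N-(q-1)}=\epsilon^{r+1}\). The lower density bound then gives probability \(\ge c\epsilon^{r+1}\), and with \(\epsilon=x^{-1/2}\) the lower bound \(cx^{-(r+1)/2}\) follows.

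\textbf{Main obstacle.} The delicate point is making this tube argument uniform. We must ensure that \(H_1\cap[-u_0,u_0]^N\) contains a \((q-1)\)-dimensional region of volume bounded below. Taking \(g_2,\dots,g_q\) near scaled coordinate vectors inside the cube, with positive probability, handles this cleanly, because then \(H_1\) is close to a coordinate subspace.

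The stated log-limit is then immediate from the two-sided polynomial bounds.
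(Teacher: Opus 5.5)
Your proof is correct. The upper bound is the same as the paper's: the negative-second-moment identity, a union bound over the $q$ columns, and the Rudelson--Vershynin/Nguyen projection small-ball estimate with $d=r+1$. Your parenthetical ``cruder argument'' is unnecessary, and as phrased it is incomplete. A bounded joint density does not by itself bound the density of a projection, so you would still need the product structure, for example a Cauchy--Binet choice of $r+1$ coordinates with a non-degenerate minor.

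The lower bound is where you take a different route. Both arguments bound from below the volume of the tube $\{h+w:\ h\in H_1\cap[-u_0/2,u_0/2]^N,\ w\in H_1^\perp,\ \|w\|_2\le\epsilon\}$. This tube lies in the region where the density of $g_1$ is at least $k_{\rm low}^N$, so everything reduces to a lower bound on the section volume $\operatorname{Vol}_{q-1}(H_1\cap[-u_0/2,u_0/2]^N)$.

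\begin{itemize}
\item You get this bound by restricting $g_2,\dots,g_q$ to a positive-probability event on which $H_1$ is close to a coordinate subspace.
\item The paper instead applies Vaaler's theorem, which gives $\operatorname{Vol}_{q-1}(H\cap[-u_0/2,u_0/2]^N)\ge u_0^{q-1}$ for \emph{every} $(q-1)$-dimensional subspace $H$. No conditioning on the geometry of the other columns is needed.
\end{itemize}

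At fixed dimension your version is more elementary and costs only a constant factor, the probability of your good event. The Vaaler route gives a bound uniform in $H$ with the explicit prefactor $k_{\rm low}^N u_0^{N-d}$. That uniformity is what lets the paper reuse the same proposition in Lemma~\ref{lem:lambda1-microscopic-lower} in the proportional regime. There, a good event of your type requires each of the $N(q-1)\asymp n^2$ entries of $g_2,\dots,g_q$ to lie in a fixed small interval. Its probability is therefore $e^{-\Theta(n^2)}$, which would swamp the $n\log n$ rate, whereas the uniform bound loses only $e^{O(n)}$.
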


In the Gaussian case, this exponent is consistent with the classical moment calculations for inverse Wishart and inverse Laguerre matrices: a tail of order \(x^{-(r+1)/2}\) implies that moments of \(\operatorname{Tr}(W^{-1})\) exist only below the corresponding threshold, matching the moment-threshold behavior found in \citet{matsumoto2012general,kumari2017moments}. Our contribution is a two-sided tail estimate under general bounded-density, locally nondegenerate entries, rather than only for the Gaussian Wishart ensemble.

\subsection{Proof of Theorem~\ref{thm:fixed-dimensional-risk-tail}}\label{sec:proof-thm1}

Let \(Y:=\Tr((G^\top G)^{-1})\). By the fixed-dimensional ridgeless risk
representation, $R
    =
    b+
    \frac1n\Tr\left((n^{-1}G^\top G)^{-1}\right)
    =
    b+Y.$ It therefore suffices to prove the following tail bound for the inverse-trace statistic $Y$:
\begin{equation}
    c x^{-(r+1)/2}
    \le
    \mathbb P(Y>x)
    \le
    C x^{-(r+1)/2},
    \qquad x\ge x_0,
    \label{eq:fixed-dimensional-inverse-tail-bounds}
\end{equation}
for some constants \(0<c<C<\infty\) and \(x_0<\infty\).

We now prove \eqref{eq:fixed-dimensional-inverse-tail-bounds}. We use the negative second-moment identity and upper and lower small-ball bounds for column-to-subspace distances; the auxiliary proofs are deferred to Section~\ref{sec:proofs-auxiliary-estimates}.
We first rewrite the inverse trace \(Y:=\Tr(W^{-1})\) as a sum of
inverse squared distances from one column to the span of the others~\citep{tao2010random2}.

\begin{lemma}[Negative second moment identity]
\label{lem:negative-second-moment-fixed}
Let \(G=[g_1,\ldots,g_q]\in\mathbb R^{N\times q}\) have full column rank, and
let \(W=G^\top G\). Put
\(H_i:=\operatorname{span}\{g_j:j\neq i\}\) and
\(D_i:=\operatorname{dist}(g_i,H_i)\). Then
\[
    \Tr(W^{-1})
    =
    \sum_{i=1}^q \frac1{D_i^2}.
\]
\end{lemma}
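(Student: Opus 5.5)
The plan is to compute the diagonal entries of \(W^{-1}\) one at a time and sum them, since \(\Tr(W^{-1})=\sum_{i=1}^q (W^{-1})_{ii}\). It therefore suffices to show, for each fixed \(i\), that
\[
(W^{-1})_{ii}=D_i^{-2}.
\]
By symmetry under permuting columns (a permutation matrix \(P\) gives \((GP)^\top GP=P^\top WP\), whose inverse is \(P^\top W^{-1}P\)), we may take \(i=1\). Write \(G=[g_1,\,G_2]\) with \(G_2\in\mathbb R^{N\times(q-1)}\). Full column rank of \(G\) implies that \(G_2\) also has full column rank, so \(G_2^\top G_2\) is invertible.

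The key step is the block (Schur complement) inversion formula applied to
\[
W=\begin{pmatrix} g_1^\top g_1 & g_1^\top G_2\\ G_2^\top g_1 & G_2^\top G_2\end{pmatrix}.
\]
This gives \((W^{-1})_{11}=s^{-1}\), where the Schur complement is
\[
s=g_1^\top g_1-g_1^\top G_2(G_2^\top G_2)^{-1}G_2^\top g_1 .
\]
Let \(P_1:=G_2(G_2^\top G_2)^{-1}G_2^\top\). This is the orthogonal projection onto \(\operatorname{col}(G_2)=H_1\), so
\[
s=g_1^\top(I-P_1)g_1=\|(I-P_1)g_1\|_2^2=D_1^2 .
\]
Moreover \(s>0\): if \(s=0\), then \(g_1\in H_1\), which contradicts full column rank. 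The block formula is therefore legitimate and \((W^{-1})_{11}=D_1^{-2}\).

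Summing over \(i\) gives the identity. The case \(q=1\) is handled directly: \(H_1=\{0\}\), so \(D_1=\|g_1\|\) and \(W^{-1}=\|g_1\|^{-2}\), consistent with the formula.

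There is no real obstacle. The only points to check are the invertibility conditions: \(G_2^\top G_2\) is invertible and the Schur complement \(s\) is nonzero, and both follow from full column rank as noted above.

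Alternatively, one can argue via the dual basis without block formulas. Let \(G^+=(G^\top G)^{-1}G^\top\), and let \(v_i^\top\) denote its \(i\)-th row. These rows satisfy \(v_i^\top g_j=\delta_{ij}\) and \(v_i\in\operatorname{col}(G)\). From \(v_i\perp H_i\) and \(v_i^\top g_i=1\), it follows that \(v_i=(I-P_i)g_i/D_i^2\), where \(P_i\) is the orthogonal projection onto \(H_i\). Hence
\[
(W^{-1})_{ii}=(G^+G^{+\top})_{ii}=\|v_i\|^2=D_i^{-2}.
\]
I would present the Schur complement version and mention this one only as a remark.
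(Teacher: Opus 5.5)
Your proof is correct and follows the paper's argument: both reduce the identity to \((W^{-1})_{ii}=D_i^{-2}\) and obtain this from the block-inverse formula, identifying the Schur complement \(g_i^\top(I-P_{H_i})g_i\) with \(D_i^2\). The only differences are that the paper permutes to \(i=q\) rather than \(i=1\), and that your invertibility checks and dual-basis remark (also valid) go beyond what the paper writes.
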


By Lemma~\ref{lem:negative-second-moment-fixed},
\(Y=\sum_{i=1}^q D_i^{-2}\).
Since \(H_i\) is generated by all columns except \(g_i\), it is independent of
\(g_i\). Also, almost surely \(\dim(H_i)=q-1\), so \(H_i\) has codimension
\(N-(q-1)=r+1\). Thus the tail of \(Y\) reduces to small-ball probabilities for the distance between a random column and the span of the remaining columns. We prove the upper bound and the lower bound separately below.

\paragraph{Upper bound for the right tail.} We obtain the upper bound for the right tail via an anti-concentration analysis of the subspace projection of a random vector.

If \(Y>y\), then at least one term in the sum exceeds
\(y/q\). Hence
    $\{Y>y\}
    \subseteq
    \bigcup_{i=1}^q
    \left\{
        D_i^2 < \frac{q}{y}
    \right\}$.
We now use the fixed-dimensional subspace distance upper bound for the distance
from an independent column to a fixed subspace. This is shown by the following
Proposition~\ref{cor:distance-subspace-upper-small-ball}, which is a direct
corollary of the projection small-ball estimate of
\citet[Theorem~2.5]{nguyen2018random} under the bounded-density condition in
Assumption~\ref{ass:general-entries}.

\begin{proposition}[Upper small-ball bound for distance to a subspace]
\label{cor:distance-subspace-upper-small-ball}
Assume the bounded-density condition B.2 in
Assumption~\ref{ass:general-entries}.
 Let \(Z=(\xi_1,\ldots,\xi_N)\) have
independent coordinates satisfying this condition, and let
\(H\subseteq\mathbb R^N\) have codimension \(d\ge1\). Then, for every \(t>0\),
\[
    \mathbb P(\operatorname{dist}(Z,H)\le t)
    \le
    \left(
        \frac{C K_{\rm up}t}{\sqrt d}
    \right)^d .
\]
In particular, for fixed \(N,d\), the probability is at most
\(C_{N,d}t^d\).
\end{proposition}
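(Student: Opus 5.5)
The plan is to derive the bound directly from the projection small-ball estimate of \citet[Theorem~2.5]{nguyen2018random}, which controls the probability that the orthogonal projection of a vector with independent bounded-density coordinates onto a fixed \(d\)-dimensional subspace lands in a small ball. The first step is to rewrite the distance as a projection norm. Let \(P\) denote the orthogonal projection onto \(H^\perp\), which is a fixed subspace of dimension \(d\) because \(H\) has codimension \(d\). Then
\[
\operatorname{dist}(Z,H)=\|PZ\|_2 ,
\]
so the event in question is exactly \(\{\|PZ\|_2\le t\}\), a small-ball event for the projection \(PZ\).

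The second step is to invoke the theorem of Nguyen--Rudelson type, in the form: for \(Z\) with independent coordinates whose densities are bounded by \(K\), and any orthogonal projection \(P\) of rank \(d\), the density of \(PZ\) on \(H^\perp\cong\mathbb R^d\) is bounded by \((CK)^d\). Integrating this density bound over the Euclidean ball of radius \(t\) in \(\mathbb R^d\) gives
\[
\mathbb P(\|PZ\|_2\le t)\le (CK_{\rm up})^d\operatorname{vol}(B_2^d)\,t^d .
\]
The third step is the volume estimate. Stirling's formula gives \(\operatorname{vol}(B_2^d)=\pi^{d/2}/\Gamma(d/2+1)\le (C'/\sqrt d)^d\). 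Absorbing \(C'\) into the constant \(C\) yields the stated bound \((CK_{\rm up}t/\sqrt d)^d\). The fixed-dimension specialization \(C_{N,d}t^d\) is then immediate.

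The step requiring the most care is the invocation itself. One must check that the cited theorem applies to arbitrary fixed subspaces, with no incompressibility or spreadness condition on \(H\). It must also be verified that the density bound holds with constant \((CK)^d\) uniformly in \(N\), so that the \(\sqrt d\) gain comes only from the ball volume. If the cited version instead states the small-ball form \(\mathbb P(\|PZ\|_2\le t\sqrt d)\le (CKt)^d\), I would simply substitute \(t\mapsto t/\sqrt d\), which gives the same conclusion without the volume computation. In either case, no property of \(H\) beyond its codimension is used. This matters for the application to \(H_i\): since \(H_i\) is independent of \(g_i\), we condition on \(H_i\) and apply the bound with \(d=r+1\).
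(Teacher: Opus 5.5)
Your proposal is correct and follows the paper's proof: both rewrite $\operatorname{dist}(Z,H)=\|P_{H^\perp}Z\|_2$ and then apply the dimension-free projection estimate of \citet[Theorem~2.5]{nguyen2018random}, equivalently Rudelson--Vershynin's bound $(CK_{\rm up})^d$ on the density of $P_{H^\perp}Z$. The only difference is cosmetic. The paper cites the small-ball form directly with center $z=0$, whereas you integrate the density bound over $B_2^d(t)$ and use $\operatorname{Vol}_d(B_2^d)\le(\sqrt{2\pi e}/\sqrt d)^d$, which gives the same $(CK_{\rm up}t/\sqrt d)^d$.
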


Conditioning on \(H_i\) and applying
Proposition~\ref{cor:distance-subspace-upper-small-ball} with \(d=r+1\) gives
\[
\begin{aligned}
    \mathbb P(Y>y)
    &\le
    \sum_{i=1}^q
    \mathbb P\left(D_i<\sqrt{\frac{q}{y}}\right) \le
    C y^{-(r+1)/2}
\end{aligned}
\]
for all \(y>0\), after changing the constant. Thus the upper bound is proved.



\paragraph{Lower bound for the right tail.}
For the reverse inequality, we use the inclusion
\(\{D_1^2<1/y\}\subseteq\{Y>y\}\). Thus it is enough to lower bound the
probability that one column lies very close to the span of the remaining
columns. We use the matching lower small-ball bound stated in
Proposition~\ref{cor:distance-subspace-lower-small-ball} below.

For the lower bound, restrict the \(H^\perp\)-component of \(X\) to a
\(d\)-dimensional ball and the \(H\)-component to a central cube section.
Vaaler's theorem~\citep{rogers1958packing,vaaler1979geometric} controls the
section volume, and the density lower bound converts it into probability; the
remaining \(a^d\) factor is the ball volume.

\begin{proposition}[Lower small-ball bound for distance to a subspace]
\label{cor:distance-subspace-lower-small-ball}
\label{prop:lower-small-ball-distance-subspace}
Assume the nondegeneracy-near-zero condition B.3 in
Assumption~\ref{ass:general-entries}, with constants
\(u_0,k_{\rm low}>0\). 
Let \(X=(\xi_1,\ldots,\xi_N)\) have i.i.d. coordinates satisfying this
condition, and let \(H\subseteq\mathbb R^N\) have codimension \(d\). Then, for
every \(0<a\le u_0/2\),
\[
    \mathbb P(\operatorname{dist}(X,H)\le a)
    \ge
    k_{\rm low}^N u_0^{N-d}\operatorname{Vol}_d(B_2^d(a)),
\]
where \(B_2^d(a)\) denotes the radius-\(a\) \(d\)-dimensional Euclidean ball and \(\operatorname{Vol}_d\) denotes
\(d\)-dimensional Lebesgue measure. In particular, for fixed \(N,d\), this is bounded below by
\(c_{\rm low}a^d\) for \(0<a\le u_0/2\).
\end{proposition}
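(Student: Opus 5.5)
The plan is to exhibit an explicit region in \(\mathbb R^N\) contained in \(\{x:\operatorname{dist}(x,H)\le a\}\cap[-u_0,u_0]^N\) and bound its Lebesgue measure from below. On the cube \([-u_0,u_0]^N\) the joint density of \(X\) is at least \(k_{\rm low}^N\) by condition B.3 and independence. So it suffices to find such a region of volume at least \(u_0^{N-d}\operatorname{Vol}_d(B_2^d(a))\).

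The region is built from an orthogonal decomposition \(\mathbb R^N=H\oplus H^\perp\), with \(\dim H=N-d\) and \(\dim H^\perp=d\). Set
\[
S:=\{h+w:\ h\in H\cap[-u_0/2,u_0/2]^N,\ w\in H^\perp,\ \|w\|_2\le a\}.
\]
Every point of \(S\) has distance at most \(\|w\|_2\le a\) from \(H\).

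To see that \(S\) lies in the cube, note that each coordinate of \(h+w\) satisfies
\[
|h_j+w_j|\le u_0/2+\|w\|_\infty\le u_0/2+a\le u_0,
\]
using \(\|w\|_\infty\le\|w\|_2\) and the hypothesis \(a\le u_0/2\). This is exactly why the restriction on \(a\) appears.

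Because the sum is orthogonal, Fubini in coordinates adapted to \(H\oplus H^\perp\) gives
\[
\operatorname{Vol}_N(S)=\operatorname{Vol}_{N-d}\bigl(H\cap[-u_0/2,u_0/2]^N\bigr)\cdot\operatorname{Vol}_d(B_2^d(a)).
\]
The central section is handled by Vaaler's theorem: every \(k\)-dimensional central section of the unit-volume cube \([-1/2,1/2]^N\) has \(k\)-volume at least \(1\). Scaling by \(u_0\), the section \(H\cap[-u_0/2,u_0/2]^N\) has \((N-d)\)-volume at least \(u_0^{N-d}\).

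Combining the pieces,
\[
\mathbb P(\operatorname{dist}(X,H)\le a)\ge \mathbb P(X\in S)\ge k_{\rm low}^N\operatorname{Vol}_N(S)\ge k_{\rm low}^N u_0^{N-d}\operatorname{Vol}_d(B_2^d(a)).
\]
The final claim follows because \(\operatorname{Vol}_d(B_2^d(a))=\omega_d a^d\).

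The only nontrivial input is Vaaler's section bound, which I will cite rather than prove. The rest is bookkeeping. One degenerate case needs a separate remark: if \(d=N\), then \(H=\{0\}\) and the section factor is \(1\), which agrees with \(u_0^0=1\).
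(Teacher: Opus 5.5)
Your proposal is correct and is essentially the paper's own proof. It uses the same product region (a radius-\(a\) ball in \(H^\perp\) times the central cube section \(H\cap[-u_0/2,u_0/2]^N\)), the same coordinatewise bound \(u_0/2+a\le u_0\) to apply the density lower bound \(k_{\rm low}^N\), and Vaaler's theorem for the section volume. Like the paper, you implicitly take \(H\) to be a linear subspace through the origin. That is what the central-section bound requires, and it holds in every application in the paper.
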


Conditioning on \(H_1\) and applying
Proposition~\ref{cor:distance-subspace-lower-small-ball} with \(d=r+1\), we obtain,
for all sufficiently large \(y\),
\[
    \mathbb P(Y>y)
    \ge
    \mathbb P\left(D_1<y^{-1/2}\right)
    \ge
    c y^{-(r+1)/2}.
\]
Then the lower bound is proved. Taking \(y=x\) proves
    $c x^{-(r+1)/2}
    \le
    \mathbb P(Y>x)
    \le
    C x^{-(r+1)/2}$
for all sufficiently large \(x\). This is exactly
\eqref{eq:fixed-dimensional-inverse-tail-bounds}, and hence proves
Theorem~\ref{thm:fixed-dimensional-risk-tail}.

\section{High-dimensional large deviations of the right tail}
\label{sec:proportional-right-tail}

We now move from fixed-dimensional tail asymptotics to the proportional
high-dimensional regime. The Marchenko--Pastur law gives the first-order
spectral prediction for \(H_n\). Throughout this section, we assume the
proportional asymptotic model
\[
    \frac{q_n}{n}\to\kappa\in(0,\infty),
    \qquad
    \frac{r_n}{n}\to\rho\in(0,\infty).
\]
We use the positive spectrum notation from Section~\ref{sec:prelim}. In
particular, \(H_n=n^{-1}\sum_{i=1}^{q_n}\lambda_{i,n}^{-1}\), and
\(h_{\kappa,\rho}\) denotes its Marchenko\textendash Pastur asymptotic limit.
We study the decay of \(\mathbb P(H_n>h_{\kappa,\rho}+\delta)\) for fixed
\(\delta>0\).

\subsection{Large deviation right-tail of ridgeless regression risk}
For fixed \(\delta>0\), the event \(H_n>h_{\kappa,\rho}+\delta\) is driven by one positive eigenvalue at scale \(n^{-1}\), which contributes order one through the singular weight \(u^{-1}\). This spectral mechanism will then be translated back to the ridgeless regression risk. In the ridgeless specialization, \(R_n=b_n+H_n\), and the remaining term \(b_n\) is deterministic and converges. Thus the fixed right-tail event for \(R_n\) is controlled, at the \(n\log n\) scale, by the event that the positive spectrum develops a microscopic hard-edge eigenvalue. In this specialization, the exponent is determined by \(\rho_\gamma=\lvert1-\gamma\rvert\). The next theorem states the corresponding right tail large deviation principle for ridgeless regression.

\begin{theorem}[Right-tail large deviations for ridgeless regression]
\label{thm:ridgeless-right-tail}
Assume the normalization, bounded-density, nondegeneracy-near-zero, and
log-Sobolev conditions B.1, B.2, B.3, and B.4 in
Assumption~\ref{ass:general-entries}.
In the ridgeless regression specialization with
\(p_n/n\to\gamma\in(0,\infty)\setminus\{1\}\), write
\(\rho_\gamma=|1-\gamma|\). Then, for every fixed \(\delta>0\),
\[
    \lim_{n\to\infty}
    \frac{1}{n\log n}
    \log \mathbb P(R_n>r_\star+\delta)
    =
    -\frac{\rho_\gamma}{2}.
\]
\end{theorem}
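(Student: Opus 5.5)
We reduce to the spectral statistic: since \(R_n=b_n+H_n\) with \(b_n\to b_\gamma\) deterministic and \(r_\star=b_\gamma+h_{\kappa_\gamma,\rho_\gamma}\), for large \(n\) the event \(\{R_n>r_\star+\delta\}\) is sandwiched between \(\{H_n>h+\delta\pm\eta\}\) for any small \(\eta>0\). The rate is claimed to be independent of \(\delta\), so it suffices to prove, for every fixed \(\delta>0\),
\[
\liminf_{n\to\infty}\frac{1}{n\log n}\log\mathbb P(H_n>h+\delta)\ge-\frac{\rho}{2},
\qquad
\limsup_{n\to\infty}\frac{1}{n\log n}\log\mathbb P(H_n>h+\delta)\le-\frac{\rho}{2},
\]
with \(\rho=\rho_\gamma\) and \(h=h_{\kappa_\gamma,\rho_\gamma}\).

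\textbf{Lower bound.} Write \(G=[g_1,\ldots,g_q]\) and use Lemma~\ref{lem:negative-second-moment-fixed}, so that \(nH_n=\Tr((G^\top G)^{-1})=\sum_i D_i^{-2}\). We force \(D_1^2\le 1/(Mn)\) for a large constant \(M\); then \(H_n\ge n^{-1}D_1^{-2}\ge M>h+\delta\). Conditioning on \(H_1\), which has codimension \(d=r_n+1\), Proposition~\ref{cor:distance-subspace-lower-small-ball} with \(a=(Mn)^{-1/2}\) gives
\[
\mathbb P(\cdot)\ge k_{\rm low}^N u_0^{N-d}\operatorname{Vol}_d(B_2^d(a)).
\]
The factor \(k_{\rm low}^N u_0^{N-d}=e^{O(n)}\) is negligible at scale \(n\log n\). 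Also
\[
\operatorname{Vol}_d(B_2^d(a))=\frac{\pi^{d/2}a^d}{\Gamma(d/2+1)}=\exp\Bigl\{-\tfrac d2\log n-\tfrac d2\log\tfrac d2+O(n)\Bigr\}.
\]
Since \(d\sim\rho n\), this is \(\exp\{-\rho n\log n(1+o(1))\}\), which is too costly by a factor of two. The Vaaler-type bound is therefore too lossy, because a uniform density on a cube does not capture the \(\sqrt d\) normalization. We instead use the sharp small-ball behavior, namely that \(\operatorname{dist}(g,H)^2\) is approximately \(\chi^2_d\)-like, so that \(\mathbb P(\operatorname{dist}\le t)\approx(Ct/\sqrt d)^d\). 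With \(t=(Mn)^{-1/2}\) and \(d\approx\rho n\) this is \(\exp\{-\tfrac{\rho}{2}n\log n(1+o(1))\}\). To get this for general entries, we condition on the other columns. By rotation we choose coordinates in which a \(d\)-dimensional block of \(g_1\) (after a Gaussian-comparable change) must be small. A practical route is to tilt to the event that \(d\) chosen coordinates of \(g_1\) lie in \([-\epsilon,\epsilon]\) with \(\epsilon\sim n^{-1}\) and the remaining coordinates are solved almost in \(H_1\). The key observation is that each coordinate confined to a width-\(n^{-1/2}\cdot n^{-1/2}\) window costs about \(\tfrac12\log n\) via the density lower bound B.3, giving \(\tfrac d2\log n\) in total. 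Making this precise needs a lower small-ball estimate of order \((ct/\sqrt d)^d\), which I would derive by projecting onto \(H_1^\perp\) and using the local density lower bound on a well-chosen \(d\)-coordinate subsystem. I also have to check that \(H_n\) is not reduced, which is automatic since \(H_n\ge n^{-1}D_1^{-2}\).

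\textbf{Upper bound.} Fix a cutoff \(\epsilon_n\downarrow0\) slowly and split
\[
H_n=n^{-1}\sum_{\lambda_i\ge\epsilon}\lambda_i^{-1}+n^{-1}\sum_{\lambda_i<\epsilon}\lambda_i^{-1}.
\]
For the nonmicroscopic part, the first sum is bounded by \(\kappa\int f_\epsilon\,d\widehat\mu_n\) with \(f_\epsilon=\min(u^{-1},\epsilon^{-1})\), which is bounded Lipschitz. Under B.4, Lipschitz concentration of spectral statistics (Guionnet--Zeitouni) gives deviation probabilities \(\exp(-cn^2\eta^2\epsilon^4)\) above the expectation. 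The expectation converges to \(h\) plus a vanishing term, because there is no MP mass near \(0\) when \(\rho>0\). This part is negligible provided \(\epsilon_n\) decays slower than \((\log n/n)^{1/4}\). For the microscopic part, we must show \(\mathbb P(n^{-1}\sum_{\lambda_i<\epsilon}\lambda_i^{-1}>\delta/2)\le\exp\{-(\rho/2-o(1))n\log n\}\). We use a hard-edge counting estimate of the form
\[
\mathbb P(\lambda_k\le s)\le\exp\Bigl\{-\tfrac{k\rho}{2}n\log\frac{1}{ns}\cdot(1+o(1))+O(kn)\Bigr\}
\]
for \(s\) small, where \(\lambda_k\) is the \(k\)-th smallest eigenvalue. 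This holds because \(\lambda_k\le s\) forces a \(k\)-dimensional subspace \(V\) of \(\mathbb R^q\) with \(\|GV\|\le\sqrt{ns}\). Covering the Grassmannian by an \(e^{O(kq\log(1/\cdot))}\) net and applying Proposition~\ref{cor:distance-subspace-upper-small-ball} to the distances of the \(k\) columns to a codimension-\(\sim r\) subspace yields \((C\sqrt{ns/r})^{kr}\). A weighted pigeonhole over dyadic levels \(\lambda\in[2^{-j-1},2^{-j}]\) then finishes the argument: if the microscopic sum exceeds \(\delta/2\), some level \(j\) has \(k_j\gtrsim \delta n2^{-j}/j^2\) eigenvalues. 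The cost \(\tfrac{\rho}{2}k_jn\log(2^{j}/n)\) is minimized, relative to \(n\log n\), at \(k=1\) and \(2^{-j}\sim n^{-1}\), giving \(\rho/2\). Summing over the polynomially many levels leaves the rate unchanged.

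\textbf{Main obstacle.} The hardest step is the hard-edge counting estimate. The net entropy \(kq\log(1/\text{mesh})\) must be \(o(kn\log n)\), which forces the mesh to be only polynomially fine, while still yielding exactly the \(\tfrac{\rho}{2}\) constant rather than a lossy one. I would first try the case \(k=1\) using the identity \(\lambda_1\ge q^{-1}\min_i D_i^2/n\) (from \(\Tr W^{-1}\le q/\min D_i^2\)) with a union bound, which gives the \(k=1\) rate directly. For larger \(k\) I would use the Grassmannian net argument, absorbing the net cost into the \(O(kn)\) error.
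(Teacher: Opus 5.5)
There is a genuine gap. It comes from a normalization slip that runs through both halves of your argument.

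\textbf{Lower bound.} Since the $\lambda_{i,n}$ are eigenvalues of $n^{-1}G^\top G$, you have $H_n=n^{-1}\Tr\bigl((n^{-1}G^\top G)^{-1}\bigr)=\Tr\bigl((G^\top G)^{-1}\bigr)=\sum_i D_i^{-2}$, with no extra factor $n$. As a check, $\Tr((G^\top G)^{-1})\approx q_n/r_n\to\kappa/\rho=h_{\kappa,\rho}$.

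Your event $\{D_1^2\le 1/(Mn)\}$ is therefore the wrong one, and it is genuinely too rare. Conditionally on the other columns, Proposition~\ref{cor:distance-subspace-upper-small-ball} bounds its probability by $\bigl(CK_{\rm up}(Mn)^{-1/2}/\sqrt{r_n+1}\bigr)^{r_n+1}=\exp\{-(\rho+o(1))n\log n\}$. So no sharper lower small-ball estimate or tilting construction can extract $\rho/2$ from this event.

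Your evaluation of $(Ct/\sqrt d)^d$ at $t=(Mn)^{-1/2}$ as $\exp\{-\frac\rho2 n\log n\}$ is an arithmetic error, since $t/\sqrt d\asymp n^{-1}$. Nor is Vaaler's bound lossy: $\operatorname{Vol}_d(B_2^d(a))=\pi^{d/2}a^d/\Gamma(d/2+1)$ is already $(c\,a/\sqrt d)^d$ up to a factor $e^{O(\log d)}$.

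The right event is $\{D_1\le a\}$ with a fixed constant $a\le\min\{(h_{\kappa,\rho}+\delta+1)^{-1/2},u_0/2\}$. Then $H_n\ge D_1^{-2}>h_{\kappa,\rho}+\delta$, and Proposition~\ref{cor:distance-subspace-lower-small-ball} with $d=r_n+1$ gives $\exp\{-\frac d2\log d+O(n)\}=\exp\{-(\frac\rho2+o(1))n\log n\}$. This is exactly the paper's Lemma~\ref{lem:lambda1-microscopic-lower}.

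\textbf{Microscopic upper bound.} The same factor of $n$ breaks this part. The counting estimate must have $\log\frac1s$, not $\log\frac1{ns}$: Proposition~\ref{prop:microscopic-hard-edge-counting} gives $\mathbb P(\lambda_{k,n}\le s)\le\exp\{C_Mnk\log\log n\}\,s^{k(r_n+k)/2}+e^{-Mn\log n}$. In your form there is no $n\log n$ decay at the critical scale $s\asymp n^{-1}$. Indeed, your pigeonhole cost $\frac\rho2k_jn\log(2^j/n)$ is $O(n)$, not $\frac\rho2n\log n$, at $2^{-j}\sim n^{-1}$.

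The $\log\frac1{ns}$ form is what the column-distance route produces, and that route fails already for $k=1$. The event $\lambda_{1,n}\le A/n$ only forces $\min_iD_i^2\le q_nA\asymp n$. This is a macroscopic lower-tail event for variables of typical size $r_n\asymp n$. The union bound then yields only $q_n\bigl(CK_{\rm up}\sqrt{q_nA/(r_n+1)}\bigr)^{r_n+1}$, which is at best exponentially small in $n$ and exceeds $1$ if $A$ is large. This is the coarse pigeonhole described in Remark~\ref{rem:fixed-distance-proof-scope}.

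So $k=1$ must also go through the Grassmannian net, and there the entropy cannot be ``absorbed into $O(kn)$''. The mesh must be of order $\sqrt s/L_n$, so that $\|G\|_{\rm op}\cdot\text{mesh}\lesssim\sqrt{ns}$ on $\{\|G\|_{\rm op}\le L_n\sqrt n\}$, with $L_n\asymp\sqrt{\log n}$ to make the complement negligible at scale $n\log n$. Hence the entropy $k(q-k)\log(L_n/\sqrt s)$ is of the same order $kn\log n$ as the small-ball gain.

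The exact constant appears only because the small-ball bound is applied to the full $Nk$-dimensional vector $\operatorname{vec}(GU_F)$, which gives $s^{Nk/2}$. The net entropy cancels precisely the $q$-part, leaving $s^{k(N-q+k)/2}=s^{k(r+k)/2}$ and a remainder $kq\log L_n=O(kn\log\log n)$.

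\textbf{Rest of the plan.} Once the counting bound is corrected, the remaining pieces go through and are essentially the paper's argument:
\begin{itemize}
\item Lipschitz concentration above the cutoff. Take $\epsilon_n=n^{-\chi}$ with $0<\chi<1/4$: it must be $\gg(\log n/n)^{1/4}$, but also small enough that $\log(1/\epsilon_n)$ dominates the $\log\log n$ entropy remainder. A merely ``slowly decaying'' cutoff does not suffice.
\item Control of the expectation. Since the cap $\epsilon_n^{-1}$ diverges, absence of Marchenko--Pastur mass near $0$ is not enough. You need an exponential lower-edge bound such as Lemma~\ref{lem:rectangular-lower-edge-tail}.
\item Your dyadic-level pigeonhole can replace the paper's $k^\theta$-weighted Lemma~\ref{lem:weighted-hard-edge-pigeonhole}.
\end{itemize}
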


Equivalently, the theorem shows \(\mathbb P(R_n>r_\star+\delta) =\exp\left\{
        -\left(\frac{\rho_\gamma}{2}+o(1)\right)n\log n
    \right\}.\)
This characterizes the sharp
logarithmic asymptotic for fixed large deviation right-tail events. 

The mechanism as shown in the proof
is a single eigenvalue at the hard edge: an event sending \(\lambda_{1,n}\) to the microscopic scale
\(n^{-1}\) already creates an \(O(1)\) upward deviation of the inverse trace. Additionally, the upper bound shows that alternative mechanisms, in which eigenvalues away from the microscopic hard edge make an essential contribution, are negligible on the \(n\log n\) scale; their probabilities are \(\exp{-\omega(n\log n)}\).

\subsection{Proof of Theorem~\ref{thm:ridgeless-right-tail}}\label{sec:proof-thm2}

It suffices to prove the sharp fixed-level right-tail asymptotic for the
inverse-trace statistic:
\begin{equation}
    \lim_{n\to\infty}
    \frac{1}{n\log n}
    \log \mathbb P(H_n>h_{\kappa,\rho}+\delta)
    =
    -\frac{\rho}{2},
    \qquad \delta>0.
    \label{eq:proportional-right-tail-asymptotic}
\end{equation}
Indeed, applied with
\((\kappa,\rho)=(\kappa_\gamma,\rho_\gamma)\), this matrix estimate transfers
directly to the risk statement.
In the regression specialization, \(R_n=b_n+H_n\), \(b_n\to b_\gamma\), and
\(r_\star=b_\gamma+h_{\kappa_\gamma,\rho_\gamma}\). Hence, for any fixed
\(\delta>0\), the events with thresholds \(r_\star+\delta\) are squeezed
between the corresponding spectral events with thresholds
\(h_{\kappa_\gamma,\rho_\gamma}+\delta/2\) and
\(h_{\kappa_\gamma,\rho_\gamma}+2\delta\), for all sufficiently large \(n\).
The matrix asymptotic \eqref{eq:proportional-right-tail-asymptotic} then gives
the same exponent for both spectral thresholds.

Therefore, it remains to prove \eqref{eq:proportional-right-tail-asymptotic}. We do this by
separating the two directions. The lower bound considers a single microscopic
eigenvalue event, which already forces a fixed upward deviation of \(H_n\). The
upper bound separates the inverse trace into a nonmicroscopic contribution,
containing all eigenvalues above the shrinking cutoff \(\eta_n\), and a
microscopic hard-edge contribution below \(\eta_n\). The nonmicroscopic part is
then controlled by the concentration of a bounded Lipschitz spectral statistic, while the
microscopic part is controlled by the hard-edge counting argument.

\subsubsection{Lower bound: one microscopic eigenvalue}
\label{subsec:right-tail-lower}

If \(\lambda_{1,n}\le c/n\), then \((n\lambda_{1,n})^{-1}\ge c^{-1}\).
Choosing \(c\) sufficiently small therefore forces
\(H_n>h_{\kappa,\rho}+\delta\). It remains to estimate the probability of this
microscopic eigenvalue event.

\begin{proposition}
\label{prop:right-tail-lower}
Assume the nondegeneracy-near-zero condition B.3 in
Assumption~\ref{ass:general-entries}.
For every fixed \(\delta>0\),
\[
    \liminf_{n\to\infty}
    \frac{1}{n\log n}
    \log\mathbb P(H_n>h_{\kappa,\rho}+\delta)
    \ge
    -\frac{\rho}{2}.
\]
\end{proposition}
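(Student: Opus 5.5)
By Lemma~\ref{lem:negative-second-moment-fixed}, the plan is to reduce the lower bound to a single column-to-subspace small-ball event and then read off the exponent from Proposition~\ref{cor:distance-subspace-lower-small-ball}. The \(n\log n\) rate should come entirely from the Gamma function in the volume of a ball of dimension \(\approx\rho n\).

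\textbf{Reduction to one column.} Since \(A_n=n^{-1}G_n^\top G_n\), we have \(\lambda_{i,n}^{-1}=n\,\mu_i^{-1}\), where \(\mu_i\) are the eigenvalues of \(W_n:=G_n^\top G_n\). Hence
\[
H_n=\frac1n\sum_i\lambda_{i,n}^{-1}=\Tr(W_n^{-1})=\sum_{i=1}^{q_n}D_i^{-2},
\]
by Lemma~\ref{lem:negative-second-moment-fixed}, where \(D_i=\operatorname{dist}(g_i,H_i)\). Note that \(G_n\) has full column rank almost surely because the entries have densities. All terms are nonnegative, so
\[
\{D_1^2<(h_{\kappa,\rho}+\delta)^{-1}\}\subseteq\{H_n>h_{\kappa,\rho}+\delta\}.
\]
Fix a constant \(a>0\), independent of \(n\), with \(a\le u_0/2\) and \(a^2<(h_{\kappa,\rho}+\delta)^{-1}\). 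Then it suffices to lower bound \(\mathbb P(D_1\le a)\).

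\textbf{Small-ball estimate.} The subspace \(H_1\) is a function of \(g_2,\ldots,g_{q_n}\) only, so it is independent of \(g_1\). Almost surely it has codimension \(d_n:=N_n-q_n+1=r_n+1\). We condition on \(H_1\) and apply Proposition~\ref{cor:distance-subspace-lower-small-ball} with \(N=N_n\) and \(d=d_n\). Its lower bound is uniform over all subspaces of codimension \(d_n\), so integrating out \(H_1\) gives
\[
\mathbb P(H_n>h_{\kappa,\rho}+\delta)\ \ge\ k_{\rm low}^{N_n}\,u_0^{N_n-d_n}\,\frac{\pi^{d_n/2}a^{d_n}}{\Gamma(d_n/2+1)}.
\]

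\textbf{Asymptotics.} Taking logarithms, the terms \(N_n\log k_{\rm low}\), \((N_n-d_n)\log u_0\), and \(d_n\log(a\sqrt\pi)\) are all \(O(n)\), because \(N_n/n\to\kappa+\rho\) and \(d_n/n\to\rho\). Stirling's formula gives
\[
\log\Gamma(d_n/2+1)=\frac{d_n}{2}\log\frac{d_n}{2}+O(d_n)=\frac{\rho}{2}\,n\log n+O(n).
\]
Dividing by \(n\log n\) and taking the \(\liminf\) yields \(-\rho/2\).

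The only delicate point is that the rate comes entirely from the \(\Gamma(d_n/2+1)^{-1}\) factor in the ball volume. The per-coordinate density and cube-section factors must therefore contribute only \(e^{O(n)}\). This holds because Proposition~\ref{cor:distance-subspace-lower-small-ball} carries explicit constants, exponential in \(N\) only, rather than the \(N\)-dependent \(c_{\rm low}\) quoted for fixed dimension. I would check this bookkeeping carefully, since it is exactly where a hidden dimension dependence could spoil the sharp constant \(\rho/2\).
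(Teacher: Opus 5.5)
Your proposal is correct and follows essentially the paper's argument: both reduce the event to one column lying within a constant distance of the span of the others, apply Proposition~\ref{cor:distance-subspace-lower-small-ball} conditionally with codimension \(r_n+1\), and extract \(-\frac{\rho}{2}n\log n\) from the \(\Gamma(d_n/2+1)^{-1}\) factor. The only differences are that you get \(H_n\ge D_1^{-2}\) directly from Lemma~\ref{lem:negative-second-moment-fixed}, whereas the paper passes through \(\lambda_{1,n}=s_{\min}(G_n)^2/n\le D_n^2/n\) in Lemma~\ref{lem:lambda1-microscopic-lower}, and a small correction shared with the paper: since only \(d_n/n\to\rho\) is known, Stirling gives \(\frac{\rho}{2}n\log n+o(n\log n)\) rather than \(+O(n)\), which still suffices for the \(\liminf\).
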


\begin{proof}[Proof of Proposition~\ref{prop:right-tail-lower}.]
The lower bound constructs an explicit event that is already sufficient for the
right-tail deviation. Namely, the event \(\lambda_{1,n}\le c_\delta/n\) already forces the required upward deviation. Choose \(c_\delta>0\) sufficiently small so that
\(c_\delta^{-1}>h_{\kappa,\rho}+\delta\). On the event
\(\lambda_{1,n}\le c_\delta/n\), the first eigenvalue alone gives
\[
    H_n
    \ge
    \frac{1}{n\lambda_{1,n}}
    \ge
    c_\delta^{-1}
    >
    h_{\kappa,\rho}+\delta .
\]
Therefore $\{\lambda_{1,n}\le c_\delta/n\}\subseteq \{H_n>h_{\kappa,\rho}+\delta\}$ and
\[
    \mathbb P(H_n>h_{\kappa,\rho}+\delta)
    \ge
    \mathbb P(\lambda_{1,n}\le c_\delta/n).
\]

It remains to prove that the probability of $\{\lambda_{1,n}\le c_\delta/n\}$ is no smaller than $\exp((-\rho/2+o(1))n\log n)$, and this is characterized by the following Lemma~\ref{lem:lambda1-microscopic-lower}.

\begin{lemma}[Microscopic lower bound for the smallest eigenvalue]
\label{lem:lambda1-microscopic-lower}
\label{lem:microscopic-smallest-eigenvalue-lower}
Assume the nondegeneracy-near-zero condition B.3 in
Assumption~\ref{ass:general-entries}.
For every fixed \(c>0\),
\[
    \liminf_{n\to\infty}
    \frac{1}{n\log n}
    \log\mathbb P\left(\lambda_{1,n}\le \frac{c}{n}\right)
    \ge
    -\frac{\rho}{2}.
\]
\end{lemma}
\begin{proof}[Proof of Lemma~\ref{lem:lambda1-microscopic-lower}.]
Write \(G_n=[g_1,\ldots,g_{q_n}]\in\mathbb R^{N_n\times q_n}\). Let
\(\mathcal H_n:=\operatorname{span}\{g_1,\ldots,g_{q_n-1}\}\) and
\(D_n:=\operatorname{dist}(g_{q_n},\mathcal H_n)\). The point of this
geometric reduction is that one column falling close to the span of the others
creates one very small singular value. By the min--max principle,
\[
    \lambda_{1,n}
    =
    \frac{s_{\min}(G_n)^2}{n}
    \le
    \frac{D_n^2}{n}.
\]
Set \(a_c:=\min\{\sqrt c,u_0/2\}\). Then
\(\{D_n\le a_c\}\subseteq\{\lambda_{1,n}\le c/n\}\).

We now estimate this distance event conditionally on the first \(q_n-1\)
columns. Since the entries have densities, those columns are linearly
independent almost surely, and hence
\(d_n:=\operatorname{codim}(\mathcal H_n)=N_n-(q_n-1)=r_n+1\).
Moreover, \(g_{q_n}\) is independent of \(\mathcal H_n\). Applying
Proposition~\ref{cor:distance-subspace-lower-small-ball} conditionally with
\(H=\mathcal H_n\), \(d=d_n\), and \(a=a_c\), we get, almost surely,
\[
    \mathbb P(D_n\le a_c\mid \mathcal H_n)
    \ge
    k_{\rm low}^{N_n}u_0^{N_n-d_n}
    \operatorname{Vol}_{d_n}(B_2^{d_n}(a_c)).
\]
Therefore \(\mathbb P\left(\lambda_{1,n}\le \frac{c}{n}\right)
    \ge
    k_{\rm low}^{N_n}u_0^{N_n-d_n}
    \operatorname{Vol}_{d_n}(B_2^{d_n}(a_c)).\)
Using
\(\operatorname{Vol}_{d}(B_2^d(a_c))=\pi^{d/2}a_c^d/\Gamma(d/2+1)\) and
Stirling's formula $\log \operatorname{Vol}_{d_n}(B_2^{d_n}(a_c))
    =
    -\frac{d_n}{2}\log d_n + O_c(d_n),$ the remaining prefactor contributes only \(O(N_n)=O(n)\). Since
\(d_n/n=(r_n+1)/n\to \rho\) and \(\log d_n=\log n+O(1)\),
we obtain
\[
    \log\mathbb P\left(\lambda_{1,n}\le \frac{c}{n}\right)
    \ge
    -\frac{\rho}{2}n\log n+O_c(n).
\]
Dividing by \(n\log n\) proves the claim.
\end{proof}

\end{proof}

\subsubsection{Upper bound: excluding non-leading mechanisms}
\label{subsec:right-tail-upper}
\label{subsec:non-hard-edge}
\label{subsec:microscopic-hard-edge-counting}

The goal of this subsection is to show that the probability of the inverse trace exceeding a fixed deviation is bounded above by $\exp((-\rho/2+o(1))n\log n)$, rigorously stated in Proposition~\ref{prop:right-tail-upper}.

\begin{proposition}
\label{prop:right-tail-upper}
Assume the normalization, bounded-density, and log-Sobolev conditions B.1,
B.2, and B.4 in
Assumption~\ref{ass:general-entries}.
For every fixed \(\delta>0\),
\[
    \limsup_{n\to\infty}
    \frac1{n\log n}
    \log\mathbb P(H_n>h_{\kappa,\rho}+\delta)
    \le
    -\frac{\rho}{2}.
\]
\end{proposition}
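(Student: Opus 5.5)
The plan is to split $H_n$ at a shrinking hard-edge cutoff $\eta_n$, for instance $\eta_n=(\log n)^{-1}$ or a small fixed power $n^{-\epsilon}$. Write
\[
H_n=H_n^{>}+H_n^{\le},\qquad H_n^{>}:=\frac1n\sum_{\lambda_{i,n}>\eta_n}\lambda_{i,n}^{-1},\qquad H_n^{\le}:=\frac1n\sum_{\lambda_{i,n}\le\eta_n}\lambda_{i,n}^{-1}.
\]
On $\{H_n>h_{\kappa,\rho}+\delta\}$, either $H_n^{>}>h_{\kappa,\rho}+\delta/2$ or $H_n^{\le}>\delta/2$. I will show that the first event has probability $\exp\{-\omega(n\log n)\}$. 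For the second I will show the bound $\exp\{-(\rho/2-o(1))n\log n\}$.

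\textbf{Nonmicroscopic part.} Truncate $u^{-1}$ to $f_\eta(u):=\min\{u^{-1},\eta^{-1}\}$ with fixed $\eta\in(0,\ell_{\kappa,\rho})$. For $\lambda_{i,n}\ge\eta$ the truncation does nothing, and $\int f_\eta\,d\mu_{\kappa,\rho}=\int u^{-1}d\mu_{\kappa,\rho}$. Eigenvalues in $(\eta_n,\eta)$ are handled separately: let $M_n(\eta)$ be their number. Each contributes at most $(n\eta_n)^{-1}$, so their total is at most $M_n(\eta)/(n\eta_n)$. Their number is also exponentially rare, because $M_n(\eta)/n\ge\epsilon$ forces $\int \phi\,d\widehat\mu_n$ to deviate by order $\epsilon$ for a bounded Lipschitz bump $\phi$ supported below $\ell_{\kappa,\rho}$. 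The tools are as follows. Under B.4, the map $G\mapsto \frac1q\sum \phi(\lambda_i)$ composed with singular values is Lipschitz in Frobenius norm with constant $O(n^{-1})$ after the square-root substitution $\phi(s^2/n)$; this holds because singular values are $1$-Lipschitz, and the map is restricted to $\{\|G\|_{op}\le C\sqrt n\}$, whose complement has probability $e^{-cn^2}$ by log-Sobolev norm control. Herbst concentration then gives deviation probability $\exp(-c n^2\epsilon^2)$. Combining these with $\mathbb E\to$ Marchenko--Pastur, the requirement $H_n^{>}>h+\delta/2$ needs either a BL deviation of order $\delta$, or $M_n(\eta)\gtrsim \delta n\eta_n$. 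Both cost $\exp(-cn^2\eta_n^2\delta^2)=\exp\{-\omega(n\log n)\}$ provided $\eta_n\gg\sqrt{\log n/n}$.

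\textbf{Microscopic part (main obstacle).} We need to control $\frac1n\sum_{\lambda_i\le\eta_n}\lambda_i^{-1}>\delta/2$. The key hard-edge counting estimate I aim for is
\[
\mathbb P\big(\lambda_{k,n}\le t/n\big)\le \binom{q_n}{k}\,\big(C t\big)^{k(r_n+k)/2}\,(C\sqrt{n}/\sqrt{r_n})^{k(r_n+k)}\cdot e^{O(kn)},
\]
or more usefully $\exp\{-\tfrac{k(r_n+k)}{2}\log(n/(Ct))+O(kn)\}$, uniformly for $k\le q_n$ and $t\le n\eta_n$. To prove it, note that $\lambda_{k}\le t/n$ means there is a $k$-dimensional subspace $V\subset\mathbb R^{q}$ with $\|G v\|\le\sqrt t\|v\|$ on $V$. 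Cover the Grassmannian by a net of size $(C\sqrt n)^{k q}$ at scale $n^{-1}$, with operator norm $\le C\sqrt n$. For a fixed $V$, pick $k$ coordinates with a well-conditioned restriction (the pigeonhole/Cauchy--Binet step giving $\binom qk$). The event then says that $k$ columns lie within $\sqrt{t}$-scale of the span of the other $q-k$, plus a small perturbation. Applying Proposition~\ref{cor:distance-subspace-upper-small-ball} to the $k\times$ codimension-$(r+k)$ projection, iterated via the Nguyen projection bound for a rank-$k$ block, gives $(CK\sqrt{t}/\sqrt{r})^{k(r+k)}$, i.e.\ $\exp\{-\tfrac{k r}{2}\log (r/t)+O(kn)\}$. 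The entropy $kq\log n/2$ of the net is the dangerous term. To beat it I would use the net only on the $k\times k$ mixing within the chosen coordinates, which costs $O(k^2\log n)$, not $kq\log n$. Getting this bookkeeping right is where I expect the real difficulty.

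\textbf{Weighted pigeonhole.} On $\{H_n^{\le}>\delta/2\}$, dyadically partition $(0,\eta_n]$ into levels $\lambda\in(2^{-j-1}\eta_n,2^{-j}\eta_n]$ and write $N_j$ for the number of eigenvalues in level $j$. Then $\sum_j N_j 2^{j}/(n\eta_n)\gtrsim\delta$, so some $j$ (among $O(\log n)$ relevant ones, since $\lambda_1\ge n^{-C}$ except with probability $e^{-\omega(n\log n)}$ by the same estimate) has $N_j2^j\gtrsim \delta n\eta_n/\log n$. Hence $\lambda_{k}\le t/n$ with $k=N_j$ and $t=n2^{-j}\eta_n\lesssim k\log n/\delta$. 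Plugging into the counting bound gives exponent $-\tfrac{k(r_n+k)}{2}\log\!\big(\tfrac{n\delta}{Ck\log n}\big)+O(kn+k^2\log n)$. For $k\le n/(\log n)^2$ this is at most $-(\tfrac{\rho}{2}-o(1))n\log n$, with the worst case $k=1$. For larger $k$, $M_n(\eta_n)\ge k$ is already $e^{-\omega(n\log n)}$ by the BL step. Summing over the $O(\log n)$ levels and the union with the first part costs only $e^{O(\log\log n)}$, which proves the proposition.
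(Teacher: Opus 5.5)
Your architecture matches the paper's: a shrinking cutoff, bounded-Lipschitz concentration for the bulk, a pigeonhole reducing the microscopic part to events $\{\lambda_{k,n}\le t/n\}$, and then a hard-edge counting bound. Your bulk step and dyadic pigeonhole are fine substitutes for the paper's $g_n$-truncation (which needs $\chi<1/4$, while yours allows $\eta_n\gg\sqrt{\log n/n}$) and for its $k^\theta$-weighted pigeonhole. One small repair there: for the bump count you need $\mathbb E\,\widehat\mu_n(\operatorname{supp}\phi)=o(\eta_n)$, not just $\to0$; Lemma~\ref{lem:rectangular-lower-edge-tail} gives $e^{-cn}$ if $\operatorname{supp}\phi\subset[0,2c_0)$.

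The gap is the counting estimate itself, and the route you sketch cannot produce the $\log n$ in the exponent. Take $k=1$ and $t\asymp1$; this is the case that matters, since your pigeonhole only gives $t\lesssim k\log n/\delta$. Choosing a coordinate with a well-conditioned restriction means choosing $i$ with $|v_i|$ large, where $v$ is the bottom singular vector. In general only $|v_i|\ge q_n^{-1/2}$ is available, and for a delocalized $v$ (the typical case) nothing better holds. Normalizing $v_i=1$ gives $\operatorname{dist}(g_i,\operatorname{span}\{g_j:j\ne i\})\le\sqrt{tq_n}\asymp\sqrt n$, which is the scale of the typical distance $\sqrt{r_n+1}$. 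So Proposition~\ref{cor:distance-subspace-upper-small-ball} yields only $q_n\bigl(C\sqrt{tq_n/(r_n+1)}\bigr)^{r_n+1}\le q_n(C't)^{(r_n+1)/2}$. This has no factor $n^{-(r_n+1)/2}$ and is not even below $1$ once $t$ exceeds a constant; it is exactly the obstruction described in Remark~\ref{rem:fixed-distance-proof-scope}.

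For general $k$, the distance threshold is inflated by $\|U_J^{-1}\|$, which is at least of order $\sqrt{q_n/k}$ when $V$ is delocalized. Your phrase ``plus a small perturbation'' hides this loss. Your first displayed form of the estimate, which lacks $n^{-k(r_n+k)/2}$, is essentially what this route yields, and it is not equivalent to the second form you go on to use. The proposed remedy does not help. With $J$ fixed, the column-distance event already optimizes over the $k(q_n-k)$ coefficients on $J^c$. The $k\times k$ ``mixing'' is a change of basis inside $V$ that leaves the event unchanged. The loss lies in the normalization by $U_J^{-1}$, not in entropy.

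The missing idea, carried out in Proposition~\ref{prop:microscopic-hard-edge-counting}, is to let the small-ball bound pay for the full Grassmannian entropy rather than trying to avoid it.

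\begin{enumerate}
\item Work on $\{\|G_n\|_{\rm op}\le L_n\sqrt n\}$ with $L_n=D\sqrt{\log n}$. A fixed $C\sqrt n$ fails with probability of order $e^{-cn}$, not $e^{-cn^2}$ as you claim, and that is too large at the $n\log n$ scale. In your bulk step this misstatement is harmless, because $s\mapsto f_\eta(s^2/n)$ and $s\mapsto\phi(s^2/n)$ are globally $O(n^{-1/2})$-Lipschitz, so no truncation is needed there.
\item Take a net of $\mathrm{Gr}(k,q_n)$ at scale $\varepsilon\asymp\sqrt t/(L_n\sqrt n)$, not $n^{-1}$. By Lemma~\ref{lem:subspace-alignment}, $\lambda_{k,n}\le t/n$ then forces $\|G_nU_F\|_F\le2\sqrt{kt}$ for some net point $F$.
\item For fixed $F$, $\operatorname{vec}(G_nU_F)$ is an orthogonal projection of $\operatorname{vec}(G_n)$ onto an $N_nk$-dimensional subspace. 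Proposition~\ref{cor:distance-subspace-upper-small-ball} then gives $(C\sqrt{t/n})^{N_nk}$, using all $N_n$ rows rather than $r_n+k$ directions per column.
\item Multiplying by the net size $(CL_n\sqrt{n/t})^{k(q_n-k)}$ leaves $(t/n)^{k(r_n+k)/2}e^{O(nk\log L_n)}$, since $N_nk-k(q_n-k)=k(r_n+k)$.
\end{enumerate}

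The error term is $O(nk\log\log n)$ rather than your $O(kn)$. This is still harmless, but it forces one adjustment in your bookkeeping. At $k\asymp n/(\log n)^2$ the gain $\tfrac{k(r_n+k)}{2}\log\tfrac{n\delta}{Ck\log n}$ is itself of order $nk\log\log n$, so it no longer dominates the error. Move the large-$k$ cutoff to $k\approx n^{1-\epsilon}$ with $0<\epsilon<1/2$. There your bump-count bound still makes $\#\{i:\lambda_{i,n}<\eta\}\ge n^{1-\epsilon}$ cost $\exp\{-cn^{2-2\epsilon}\}=e^{-\omega(n\log n)}$.
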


The proof separates the nonmicroscopic and microscopic contributions. The
nonmicroscopic estimate and the bounded-Lipschitz concentration input are
proved in the main text; the hard-edge pigeonhole and counting inputs
(Lemma~\ref{lem:weighted-hard-edge-pigeonhole} and
Proposition~\ref{prop:microscopic-hard-edge-counting}) are stated where they
are used, with their proofs deferred to
Section~\ref{sec:proofs-auxiliary-estimates}. 

\begin{proof}[Proof of Proposition~\ref{prop:right-tail-upper}.]


\begin{center}
\resizebox{\textwidth}{!}{%
\begin{tikzpicture}[x=1cm,y=1cm,font=\scriptsize]
    \draw[->,thick] (0,0) -- (16.0,0) node[right] {$u$};
    \draw[line width=3pt,purple!65] (0,0) -- (5.50,0);
    \draw[line width=3pt,blue!65!black] (5.50,0) -- (15.25,0);

    \draw[thick] (0,0.11) -- (0,-0.11) node[below] {$0$};
    \node[align=left,below] at (0,0.72) {hard edge};
    \draw[thick,dashed] (0, 0) -- (0,0.30);
    \draw[thick] (5.50,0.11) -- (5.50,-0.11) node[below] {$\eta_n=n^{-\chi}$};
    \draw[thick,dashed] (10.15,0.30) -- (10.15,-0.20);
    \node[align=center,below] at (10.15,0.72) {MP lower edge};
    \node[below] at (10.15,-0.02) {$\ell_{\kappa,\rho}$};
    \node[below] at (15.25,-0.02) {$\infty$};

    \node[
        draw=purple!65,
        fill=purple!6,
        rounded corners=1pt,
        align=center,
        text width=3.70cm,
        anchor=south
    ] at (2.60,0.88)
    {\textbf{Microscopic hard edge}\\
 \textbf{Aim}:\(\mathbb P(S_n>\delta/2)\le
    \exp\{-(\rho/2-o(1))n\log n\}\)\\
    \textbf{Technique}: weighted pigeonhole\\
    \(+\) microscopic hard-edge\\
    counting bound};

    \node[
        draw=blue!65!black,
        fill=blue!6,
        rounded corners=1pt,
        align=center,
        text width=5.85cm,
        anchor=south
    ] at (10.20,0.88)
    {\textbf{Nonmicroscopic spectrum}\\
  \textbf{Aim}: \(\mathbb P(M_n>h_{\kappa,\rho}+\delta/2)\le
    \exp\{-c n^2\eta_n^4\}\)\\
    =\(\exp\{-\omega(n\log n)\}\)\\
    \textbf{Technique}: concentration of bounded Lipschitz spectral statistic};
\end{tikzpicture}%
}
\end{center}

We first decompose the inverse trace into two spectral regions and set
\(\eta_n:=n^{-\chi}\), with fixed \(0<\chi<1/4\). The cutoff satisfies
\(\eta_n\gg n^{-1}\) and separates the microscopic hard-edge contribution from
the nonmicroscopic part. More rigorously, \(M_n\) contains all eigenvalues at
least \(\eta_n\), while \(S_n\) contains the microscopic eigenvalues below
\(\eta_n\). Define
\[
\begin{aligned}
    M_n:=
    \frac1n\sum_{\lambda_{i,n}\ge\eta_n}
    \frac1{\lambda_{i,n}},
    \qquad
    S_n:=\frac1n\sum_{\lambda_{i,n}<\eta_n}
    \frac1{\lambda_{i,n}}.
\end{aligned}
\]
Then \(H_n=M_n+S_n\). If
\(M_n\le h_{\kappa,\rho}+\delta/2\) and \(S_n\le\delta/2\), then
\(H_n\le h_{\kappa,\rho}+\delta\).
Therefore
\[
\begin{aligned}
    \left\{
        H_n>h_{\kappa,\rho}+\delta
    \right\}
    \subseteq
    \left\{M_n>h_{\kappa,\rho}+\frac{\delta}{2}\right\}
    \cup
    \left\{S_n>\frac{\delta}{2}\right\}.
\end{aligned}
\]
This reduces the right-tail event to a nonmicroscopic deviation and a
microscopic hard-edge deviation.

\paragraph{Nonmicroscopic control for \(M_n\).}

Eigenvalues above the microscopic cutoff are negligible at the \(n\log n\)
scale.
We first state the
concentration estimate used to control such bounded-Lipschitz spectral
statistics.

\begin{lemma}[Concentration of Lipschitz spectral statistics]
\label{lem:lipschitz-spectral-statistic-concentration}
Let \(G_n\in\mathbb R^{N_n\times q_n}\) be the random matrix in
\eqref{eq:define-rn}, and assume that its entries satisfy the log-Sobolev
condition B.4 in
Assumption~\ref{ass:general-entries}. For a bounded Lipschitz function
\(h:\mathbb R_+\to\mathbb R\) and a deterministic matrix
\(B\in\mathbb R^{N_n\times q_n}\), define
\[
    F_{n,h}(B):=\frac1n\sum_{i=1}^{q_n}h(\lambda_i(n^{-1}B^\top B)).
\]
There exist constants \(c,C>0\) and \(n_0<\infty\) such that, for every
\(n\ge n_0\), every bounded Lipschitz \(h\), and every
$
    t\ge C\|h\|_\infty e^{-c n^2},
$
\begin{equation}
    \mathbb P\bigl(F_{n,h}(G_n)-\mathbb E F_{n,h}(G_n)>t\bigr)
    \le
    e^{-c n^2}
    +
    \exp\left\{
        -c\frac{n^2t^2}{\|h\|_{\mathrm{Lip}}^2}
    \right\}.
    \label{eq:lipschitz-concentration-spectral-statistic}
\end{equation}
If \(\|h\|_{\mathrm{Lip}}=0\), the second term is interpreted as \(0\).
\end{lemma}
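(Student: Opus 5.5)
The plan is to follow the Guionnet--Zeitouni route: concentration of Lipschitz functions of the entries under the log-Sobolev condition B.4, applied after a truncation that makes \(B\mapsto F_{n,h}(B)\) Lipschitz in Frobenius norm with constant of order \(\|h\|_{\mathrm{Lip}}/n\). The first step is to pass to singular values. We have \(\lambda_i(n^{-1}B^\top B)=s_i(B)^2/n\), so \(F_{n,h}(B)=\frac1n\sum_i\varphi(s_i(B))\) with \(\varphi(s):=h(s^2/n)\). By the Hoffman--Wielandt (Mirsky) inequality for singular values, \(\sum_i|s_i(B)-s_i(B')|^2\le\|B-B'\|_F^2\). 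Hence, if \(\varphi\) were \(L\)-Lipschitz, Cauchy--Schwarz would give \(|F_{n,h}(B)-F_{n,h}(B')|\le n^{-1}\sqrt{q_n}\,L\,\|B-B'\|_F\). Since the entries satisfy a log-Sobolev inequality, the law of \(G_n\) on \(\mathbb R^{N_nq_n}\) does too, with the same constant by tensorization. Herbst's argument then gives the Gaussian bound \(\exp\{-ct^2/\mathrm{Lip}^2\}\) for any Lipschitz function of \(G_n\).

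The difficulty is that \(\varphi'(s)=2sn^{-1}h'(s^2/n)\) is unbounded, so truncation is needed. On \(\{s\le K\sqrt n\}\) we have \(|\varphi'|\le 2K\|h\|_{\mathrm{Lip}}/\sqrt n\). So we replace \(h\) by \(h_K(u):=h(\min\{u,K^2\})\), which has the same sup norm and Lipschitz constant, and whose \(\varphi_K\) is globally \(2K\|h\|_{\mathrm{Lip}}/\sqrt n\)-Lipschitz. Then \(B\mapsto F_{n,h_K}(B)\) has Frobenius Lipschitz constant at most \(C K\|h\|_{\mathrm{Lip}}/n\), using \(q_n\asymp n\). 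Herbst then yields
\[
\mathbb P\bigl(F_{n,h_K}(G_n)-\mathbb E F_{n,h_K}(G_n)>t\bigr)\le\exp\{-c n^2t^2/\|h\|_{\mathrm{Lip}}^2\},
\]
which is the second term in \eqref{eq:lipschitz-concentration-spectral-statistic}. What remains is to compare \(F_{n,h}\) with \(F_{n,h_K}\), both pathwise and in expectation. On the good event \(\mathcal E_K:=\{\lambda_{q_n,n}\le K^2\}\) they coincide. The difference of expectations is at most \(2\|h\|_\infty\mathbb P(\mathcal E_K^c)\cdot q_n/n\), and this is where the hypothesis \(t\ge C\|h\|_\infty e^{-cn^2}\) absorbs the mean shift, after replacing \(t\) by \(t/2\).

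The main obstacle is therefore the bound \(\mathbb P(\mathcal E_K^c)\le e^{-cn^2}\). I would attack it through concentration of \(\|G_n\|_{\mathrm{op}}\), which is \(1\)-Lipschitz in Frobenius norm and has mean \(O(\sqrt n)\) under B.1 and B.4. However, Herbst only gives \(\mathbb P(\|G_n\|_{\mathrm{op}}>C\sqrt n+u)\le e^{-cu^2}\). Reaching \(e^{-cn^2}\) requires \(u\asymp n\), i.e.\ truncating at \(\lambda\lesssim n\) rather than at a constant, and that would cost a factor \(n\) in the Lipschitz constant. I would therefore try a softer truncation that only controls the \emph{fraction} of large eigenvalues. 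Since \(\|G_n\|_F\) is \(1\)-Lipschitz with mean about \(\sqrt{N_nq_n}\asymp n\), we get \(\mathbb P(\|G_n\|_F^2>2N_nq_n)\le e^{-cn^2}\). On the complement, \(\#\{i:\lambda_{i,n}>K^2\}\le Cn/K^2\), so \(|F_{n,h}-F_{n,h_K}|\le C\|h\|_\infty/K^2\).

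This makes the error \(O(\|h\|_\infty/K^2)\) rather than \(e^{-cn^2}\). To close the argument I would choose \(K=K(t)\asymp(\|h\|_\infty/t)^{1/2}\), at the cost of a worse dependence of the exponent on \(t\). As written, the statement would need \(\mathbb P(\lambda_{q_n,n}>K^2)\le e^{-cn^2}\) for a fixed \(K\). I would first check whether B.4 can deliver this, or whether the estimate should be stated with the modified \(t\)-dependence or with speed \(n\) in the first term. For the application to \(M_n\), where \(h\) is bounded by \(\eta_n^{-1}\) and Lipschitz with constant \(\eta_n^{-2}\), either version still gives \(\exp\{-\omega(n\log n)\}\) for \(\chi<1/4\).
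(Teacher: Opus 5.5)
Your proposal does not prove the lemma. The obstruction you run into comes from your perturbation step, not from the statement, which holds as written. You bound $|\varphi(s_i(B))-\varphi(s_i(B'))|$ by a \emph{uniform} Lipschitz constant of $\varphi(s)=h(s^2/n)$. Since $\varphi'(s)$ grows like $s/n$, this forces control of every singular value, i.e.\ of $\|G_n\|_{\mathrm{op}}$. As you correctly note, that control is not available with probability $1-e^{-cn^2}$ at the scale $\sqrt n$. Instead, keep the growing factor inside the sum: $|h(s_i^2/n)-h(s_i'^2/n)|\le n^{-1}\|h\|_{\mathrm{Lip}}\,|s_i-s_i'|\,(s_i+s_i')$. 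Cauchy--Schwarz over $i$, together with the Hoffman--Wielandt inequality you already use, then gives
\[
    |F_{n,h}(B)-F_{n,h}(B')|
    \le
    \frac{\|h\|_{\mathrm{Lip}}}{n^2}\bigl(\|B\|_F+\|B'\|_F\bigr)\|B-B'\|_F .
\]
This is Lemma~\ref{lem:local-lipschitz-spectral-statistic}, which the paper obtains from the trace-norm Lidskii--Mirsky inequality and $\|A_G-A_H\|_*\le n^{-1}(\|G\|_F+\|H\|_F)\|G-H\|_F$.

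This bound needs only Frobenius control, and $\|G_n\|_F\asymp n$ is the typical scale. Since $\|G_n\|_F$ is $1$-Lipschitz with mean at most $\sqrt{N_nq_n}\le Cn$, you get $\mathbb P(\|G_n\|_F>Rn)\le e^{-cn^2}$ for a fixed large $R$. This is essentially the event you already wrote down. The second change is to truncate the matrix rather than the function. Let $\Pi_R$ be the Euclidean projection onto the convex set $\{\|G\|_F\le Rn\}$; it is nonexpansive, so $F_{n,h}\circ\Pi_R$ is globally $2R\|h\|_{\mathrm{Lip}}/n$-Lipschitz, and Herbst gives the second term of \eqref{eq:lipschitz-concentration-spectral-statistic}. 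This function agrees with $F_{n,h}$ off an event of probability $e^{-cn^2}$. Both are bounded by $C_0\|h\|_\infty$, so the mean shift is at most $C\|h\|_\infty e^{-cn^2}\le t/2$ under the hypothesis on $t$.

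Your proposed fallbacks would also not suffice where the lemma is used. A version with speed $n$ in the first term comes from operator-norm truncation at a fixed level, with failure probability $e^{-cn}$. In Proposition~\ref{prop:nonmicroscopic-inverse-trace} it would give only $e^{-cn}+\exp\{-cn^2\eta_n^4\}$. That is not $\exp\{-\omega(n\log n)\}$, so it would break the upper bound of Theorem~\ref{thm:ridgeless-right-tail}. The soft-truncation version with $K^2\asymp\|h\|_\infty/t$ yields an exponent of order $n^2t^3/(\|h\|_\infty\|h\|_{\mathrm{Lip}}^2)$. For $h=g_n$ this is of order $n^{2-5\chi}$, which is $\omega(n\log n)$ only for $\chi<1/5$, not for all $\chi<1/4$ as you claim.
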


\begin{proof}[Proof of Lemma~\ref{lem:lipschitz-spectral-statistic-concentration}.]
If \(\|h\|_{\mathrm{Lip}}=0\), then \(F_{n,h}(G_n)\) is deterministic and the
claim is immediate. Assume below that \(\|h\|_{\mathrm{Lip}}>0\).
Choose \(R>0\) large enough so that the Frobenius ball
\(B_R:=\{G:\|G\|_F\le Rn\}\) satisfies
\(\mathbb P(G_n\notin B_R)\le e^{-c n^2}\). This follows from the fact that
the log-Sobolev condition B.4 implies subgaussian tails, together with
\(N_nq_n=O(n^2)\). Let \(\Pi_R\) be the Euclidean
projection onto \(B_R\), and set
\(F^{(R)}_{n,h}(G):=F_{n,h}(\Pi_R G)\). By
Lemma~\ref{lem:local-lipschitz-spectral-statistic}, applied with
\(\omega_n=1/n\), and since \(\Pi_R\) is nonexpansive, the map
\(F^{(R)}_{n,h}\) is globally
\(C\|h\|_{\mathrm{Lip}}/n\)-Lipschitz. The logarithmic Sobolev inequality in
Assumption B.4, together with tensorization, therefore gives~\citep{ledoux2006concentration}
\[
    \mathbb P\left(
        F^{(R)}_{n,h}(G_n)-\mathbb E F^{(R)}_{n,h}(G_n)>u
    \right)
    \le
    \exp\left\{
        -c\frac{n^2u^2}{\|h\|_{\mathrm{Lip}}^2}
    \right\}.
\]
Since \(q_n/n=O(1)\), both \(|F_{n,h}(G_n)|\) and
\(|F^{(R)}_{n,h}(G_n)|\) are bounded by \(C_0\|h\|_\infty\), and
\(F^{(R)}_{n,h}(G_n)=F_{n,h}(G_n)\) on the event \(\{G_n\in B_R\}\). Hence
\[
    |\mathbb E F^{(R)}_{n,h}(G_n)-\mathbb E F_{n,h}(G_n)|
    \le
    2C_0\|h\|_\infty\mathbb P(G_n\notin B_R)
    \le
    C\|h\|_\infty e^{-c n^2}.
\]
If \(t\ge C\|h\|_\infty e^{-c n^2}\), then this expectation error is at most
\(t/2\), after increasing \(C\) if necessary, and
\eqref{eq:lipschitz-concentration-spectral-statistic} follows by applying the
preceding concentration inequality with \(u=t/2\) and by adding
\(\mathbb P(G_n\notin B_R)\).
\end{proof}

\begin{proposition}[Nonmicroscopic inverse-trace contribution]
\label{prop:nonmicroscopic-inverse-trace}
Assume the normalization and log-Sobolev conditions B.1 and B.4 in
Assumption~\ref{ass:general-entries}. Let
\(\eta_n=n^{-\chi}\) with \(0<\chi<1/4\), and set \(M_n:=\frac1n\sum_{\lambda_{i,n}\ge\eta_n}
        \frac1{\lambda_{i,n}}\). For every fixed \(\delta>0\), there
exists \(c_\delta>0\) such that
\[
    \mathbb P\left(
        M_n
        >
        h_{\kappa,\rho}+\delta
    \right)
    \le
    \exp\{-c_\delta n^2\eta_n^4\}
\]
for all sufficiently large \(n\). In particular, this probability is
\(\exp\{-\omega(n\log n)\}\).
\end{proposition}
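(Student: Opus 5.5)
The plan is to dominate \(M_n\) by a bounded Lipschitz spectral statistic and then invoke Lemma~\ref{lem:lipschitz-spectral-statistic-concentration}. Define the truncated inverse
\[
    h_n(u):=\frac{1}{\max\{u,\eta_n\}},\qquad u\ge 0 .
\]
This function satisfies \(\|h_n\|_\infty=\eta_n^{-1}\) and \(\|h_n\|_{\mathrm{Lip}}=\eta_n^{-2}\). Since \(h_n(u)=u^{-1}\) for \(u\ge\eta_n\) and \(h_n\ge0\), we have \(M_n\le F_{n,h_n}(G_n)\), so it suffices to bound \(\mathbb P(F_{n,h_n}(G_n)>h_{\kappa,\rho}+\delta)\).

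The first step is the deterministic centering claim
\[
    \mathbb E F_{n,h_n}(G_n)\le h_{\kappa,\rho}+\delta/2 \quad\text{for large } n .
\]
I would split \(h_n=h^{(1)}+(h_n-h^{(1)})\), where \(h^{(1)}(u)=1/\max\{u,\ell_{\kappa,\rho}/2\}\) is a fixed bounded Lipschitz function.
\begin{itemize}
\item For the fixed part, weak convergence of \(\widehat\mu_n\) to the Marchenko--Pastur law, together with \(q_n/n\to\kappa\) and boundedness of \(h^{(1)}\) (dominated convergence), gives \(\mathbb E F_{n,h^{(1)}}\to \kappa\int h^{(1)}d\mu_{\kappa,\rho}=h_{\kappa,\rho}\). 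This uses that \(\mu_{\kappa,\rho}\) lives on \([\ell_{\kappa,\rho},\infty)\), where \(h^{(1)}=u^{-1}\).
\item The remainder \(h_n-h^{(1)}\) is supported on \([0,\ell_{\kappa,\rho}/2)\) and bounded by \(\eta_n^{-1}\). Its expectation is therefore at most \(\eta_n^{-1}\,\tfrac{q_n}{n}\,\mathbb E\widehat\mu_n([0,\ell/2))\).
\end{itemize}
Controlling the remainder is the delicate point, because the expected mass near zero must be \(o(\eta_n)=o(n^{-\chi})\). I would handle it again with Lemma~\ref{lem:lipschitz-spectral-statistic-concentration}. Take a fixed Lipschitz bump \(\phi\) with \(\mathbf 1_{[0,\ell/2)}\le\phi\le\mathbf 1_{[0,3\ell/4)}\). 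Since \(\int\phi\,d\mu_{\kappa,\rho}=0\), we get \(\mathbb E\int\phi\,d\widehat\mu_n\to0\). A rate is still needed, though; weak convergence alone gives none.

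The cleaner route is to avoid expectations near zero entirely, via the exact bound \(\widehat\mu_n([0,\ell/2))\le \int\phi\,d\widehat\mu_n\). The concentration lemma (applied at deviation \(t\) fixed small) shows that this quantity exceeds \(t\) only with probability \(e^{-cn^2}\). On the complement, the number of eigenvalues below \(\ell/2\) is at most \(tn\), with \(t\) arbitrary. That is still too crude, since \(tn\) eigenvalues each contribute up to \(\eta_n^{-1}/n\).

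So I would instead use a typical lower bound on the expected count. For mean-zero, variance-one entries with the regularity of B.4, the smallest singular value satisfies \(\lambda_{1,n}\ge c\) with probability \(1-e^{-cn}\) when \(\rho>0\) (Rudelson--Vershynin for rectangular matrices). This gives \(\mathbb E\widehat\mu_n([0,\ell/2))\le \mathbb P(\lambda_{1,n}<c')\). Here I would have to choose the cutoff below the RV constant rather than \(\ell/2\), and use the Lipschitz comparison with MP on \([c',\infty)\). The resulting \(\eta_n^{-1}e^{-cn}\) is negligible. I expect this centering step to be the main obstacle, and this is the argument I would try first. Combined with \(\mathbb E F_{n,h^{(1)}}\le h_{\kappa,\rho}+\delta/4\) for large \(n\), it yields the centering claim.

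With the centering claim in hand, apply Lemma~\ref{lem:lipschitz-spectral-statistic-concentration} with \(h=h_n\) and \(t=\delta/2\). The threshold condition \(t\ge C\eta_n^{-1}e^{-cn^2}\) holds for large \(n\). This gives
\[
\mathbb P(M_n>h_{\kappa,\rho}+\delta)\le e^{-cn^2}+\exp\{-c\,n^2\delta^2\eta_n^{4}/4\}\le \exp\{-c_\delta n^2\eta_n^4\}.
\]
The last inequality holds because \(n^2\eta_n^4=n^{2-4\chi}\le n^2\). Finally, \(\chi<1/4\) gives \(2-4\chi>1\), so \(n^2\eta_n^4/(n\log n)\to\infty\), which is the \(\exp\{-\omega(n\log n)\}\) claim.
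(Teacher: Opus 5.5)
Your proposal is correct and follows the paper's proof essentially step for step. Both use the same truncation \(1/\max\{u,\eta_n\}\), and both center it by a fixed bounded function equal to \(u^{-1}\) above a small constant \(c_0<\ell_{\kappa,\rho}\), with the Rudelson--Vershynin bound \(\mathbb P(\lambda_{1,n}<c_0)\le e^{-cn}\) absorbing the \(\eta_n^{-1}\) remainder; both then apply Lemma~\ref{lem:lipschitz-spectral-statistic-concentration} with \(t=\delta/2\). In a final write-up, drop the exploratory detours (the bump function \(\phi\) and the cutoff \(\ell/2\)), since the inequality \(\mathbb E\widehat\mu_n([0,\ell/2))\le\mathbb P(\lambda_{1,n}<c')\) as written only holds once the cutoff is lowered to \(c'\).
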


\begin{proof}[Proof of Proposition~\ref{prop:nonmicroscopic-inverse-trace}.]
Set
\[
    g_n(u):=
    \begin{cases}
        \eta_n^{-1}, & 0\le u<\eta_n,\\
        u^{-1}, & u\ge \eta_n.
    \end{cases}
\]

Since \(g_n(u)\ge u^{-1}\mathbf 1\{u\ge\eta_n\}\), we have
\(M_n\le F_{n,g_n}(G_n)\). Moreover, \(\lVert g_n\rVert_\infty\le C\eta_n^{-1}\) and \(\lVert g_n\rVert_{\mathrm{Lip}}\le C\eta_n^{-2}\).

Since \(M_n\le F_{n,g_n}(G_n)\) by our construction, \(\{M_n>h_{\kappa,\rho}+\delta\}\subseteq \{F_{n,g_n}(G_n)>h_{\kappa,\rho}+\delta\}\), therefore
\begin{equation}\label{inequality:gnconcerntration}
\mathbb P\left(M_n>h_{\kappa,\rho}+\delta\right)
    \le \mathbb P(F_{n,g_n}(G_n)>h_{\kappa,\rho}+\delta).
\end{equation}
We aim to use the concentration of \(F_{n,g_n}(G_n)\) to bound the right-hand side.
First, we estimate the expectation \(\mathbb E F_{n,g_n}(G_n)\).  We fix
\(c_0>0\) as stated in
Lemma~\ref{lem:rectangular-lower-edge-tail}. Let
\(f_0\in C_b^1(\mathbb R_+)\) be a fixed nonnegative function such that
\(f_0(u)=u^{-1}\) for all \(u\ge c_0\). By construction,
\[
    F_{n,g_n}(G_n)
    \le
    \frac1n\sum_{i=1}^{q_n}f_0(\lambda_{i,n})
    +
    \frac{q_n}{n}\eta_n^{-1}
    \mathbf 1_{\{\lambda_{1,n}<c_0\}}.
\]
The first term converges in expectation to \(h_{\kappa,\rho}\) by the
universality of the Marchenko--Pastur law under condition B.1~\citep{bai2010spectral},
the boundedness of \(f_0\in C_b^1(\mathbb R^+)\), \(q_n/n\to\kappa\), and the
identity \(f_0(u)=u^{-1}\) on the limiting support.
The log-Sobolev condition B.4 implies subgaussian tails, so
Lemma~\ref{lem:rectangular-lower-edge-tail} applies, which implies the following control of the second term:
\[
    \frac{q_n}{n}\eta_n^{-1}
    \mathbb P(\lambda_{1,n}<c_0)
    \le
    C\eta_n^{-1}e^{-cn}
    =
    o(1).
\]
Thus \(\mathbb E F_{n,g_n}(G_n)\le h_{\kappa,\rho}+\delta/2\) for all
sufficiently large \(n\). For each \(n\), apply
Lemma~\ref{lem:lipschitz-spectral-statistic-concentration} with \(h=g_n\).
Since \(\|g_n\|_\infty=O(\eta_n^{-1})\) is polynomial in \(n\), the lower
threshold condition
\[
    \delta/2\ge C\|g_n\|_\infty e^{-cn^2}
\]
holds for all large \(n\). Hence combining~\eqref{inequality:gnconcerntration}
with Lemma~\ref{lem:lipschitz-spectral-statistic-concentration} we have
\begin{equation}
\begin{aligned}[b]
    \mathbb P\left(M_n>h_{\kappa,\rho}+\delta\right)
    \le
    \mathbb P\left(
        F_{n,g_n}(G_n)-\mathbb E F_{n,g_n}(G_n)>\frac{\delta}{2}
    \right)
    \le
    e^{-cn^2}
    +
    \exp\{-c_\delta n^2\eta_n^4\}.
\end{aligned}
\end{equation}
Since \(\eta_n\le1\), the \(e^{-cn^2}\) term and the resulting factor \(2\) are
absorbed into the second term after decreasing \(c_\delta\).
\end{proof}

Applying Proposition~\ref{prop:nonmicroscopic-inverse-trace} with
\(\delta/2\) in place of \(\delta\) gives
\[
    \mathbb P\left(M_n>h_{\kappa,\rho}+\frac{\delta}{2}\right)
    \le
    \exp\{-c_\delta n^2\eta_n^4\}
    =
    \exp\{-\omega(n\log n)\}.
\]

\paragraph{Microscopic hard-edge control for \(S_n\).}

This is the only region that can still have the same \(n\log n\) scale as the
lower bound. The event is \(\{S_n>\delta/2\}\), where \(S_n\) is the
contribution of eigenvalues below \(\eta_n\). One eigenvalue at scale \(1/n\)
already gives an \(O(1)\) contribution to \(H_n\), while forcing \(k\)
eigenvalues to be that small should cost more at the \(n\log n\) scale. The
proof first uses Lemma~\ref{lem:weighted-hard-edge-pigeonhole} to convert a
large value of \(S_n\) into a union of ordered-eigenvalue events, and then
applies Proposition~\ref{prop:microscopic-hard-edge-counting} to show that
\(k=1\) is the leading term.

\begin{proposition}[Microscopic hard-edge counting]
\label{prop:microscopic-hard-edge-counting}
Assume the normalization and bounded-density conditions B.1 and B.2 in
Assumption~\ref{ass:general-entries}, and assume in addition that the entries
have subgaussian tails. Let \(L_n\to\infty\) be a deterministic real
sequence and \(r_n\) be as defined in~\eqref{eq:define-rn}. Then there exist
\(c,C\in(0,\infty)\) and \(n_0<\infty\) such that, for every
\(n\ge n_0\), every \(1\le k\le q_n\), and every \(0<t\le1\),
\[
    \mathbb P(\lambda_{k,n}\le t)
    \le
    \exp\{Cnk(1+\log L_n)\}\,
    t^{k(r_n+k)/2}
    +
    2\exp\{-c nL_n^2\}
\].

In particular, for every \(M>0\), there exist constants \(C_M<\infty\) and
\(n_0<\infty\) such that, for every \(n\ge n_0\), every
\(1\le k\le q_n\), and every \(0<t\le1\),
\[
    \mathbb P(\lambda_{k,n}\le t)
    \le
    \exp\{C_M nk\log\log n\}\,
    t^{k(r_n+k)/2}
    +
    \exp\{-M n\log n\}
\].
\end{proposition}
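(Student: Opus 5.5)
Throughout, write \(A_n=n^{-1}G_n^\top G_n\), so that \(\lambda_{k,n}=s_k(G_n)^2/n\), where \(s_k\) denotes the \(k\)-th smallest singular value. The approach is geometric: a Grassmannian net combined with the projection small-ball estimate of \citet[Theorem~2.5]{nguyen2018random}, the same input behind Proposition~\ref{cor:distance-subspace-upper-small-ball}.

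\textbf{Step 1 (norm truncation).} Let \(\mathcal E_n:=\{\|G_n\|_{\mathrm{op}}\le L_n\sqrt n\}\). Subgaussian tails and a standard net argument give \(\mathbb P(\mathcal E_n^c)\le 2\exp\{-cnL_n^2\}\) once \(L_n\) exceeds a fixed constant. This produces the second term of the bound, so it remains to estimate \(\mathbb P(\lambda_{k,n}\le t,\ \mathcal E_n)\).

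\textbf{Step 2 (variational reduction).} If \(\lambda_{k,n}\le t\), then by min--max there is a \(k\)-dimensional subspace \(V\subseteq\mathbb R^{q_n}\) with \(\|G_nv\|_2\le\sqrt{tn}\,\|v\|_2\) for all \(v\in V\). Here I would use the dual formulation rather than working with \(V\) directly. Let \(E\subseteq\mathbb R^{q_n}\) be the span of the bottom \(k\) right singular vectors. Then \(G_nE\) lies within \(\sqrt{tn}\) of the span of the other \(q_n-k\) image directions. Equivalently, pick a coordinate set: after reordering columns, there is an index set \(J\) of size \(k\) such that the columns \(g_j\), \(j\in J\), nearly lie in \(W_J:=\operatorname{span}\{g_i:i\notin J\}\). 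The precise statement I would aim for is the following: the orthogonal projection \(P_{W_J^\perp}G_J\) has all \(k\) singular values at most \(C\sqrt{q_n}\,\sqrt{tn}\). This follows from the restricted invertibility/negative-moment identity of Lemma~\ref{lem:negative-second-moment-fixed}, generalized to blocks.

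This choice costs a union over \(\binom{q_n}{k}\le e^{Ck\log n}\) sets. That factor is too large when \(k\) is small relative to \(\log\log n\) considerations, so I would instead keep a continuous net. Concretely: on \(\mathcal E_n\), take an \(\varepsilon\)-net \(\mathcal N\) of the Grassmannian \(\mathrm{Gr}(k,q_n)\) with \(\varepsilon=\sqrt t/L_n\) (suitably scaled). Its cardinality is \((C L_n/\sqrt t)^{kq_n}\), which is unfortunately polynomial in \(t^{-1}\). To avoid losing powers of \(t\), the net should be placed on the \((q_n-k)\)-dimensional complement, that is, on the image subspace \(W=G_nV^\perp\subset\mathbb R^{N_n}\) of codimension \(r_n+k\). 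Conditioning on \(V^\perp\)-columns then leaves \(k\) independent-direction vectors \(G_nv\), \(v\in V\), whose projections onto \(W^\perp\) must all be small. The natural conditional structure is a column-based one, so I would take \(V\) close to a coordinate subspace after a random rotation-free reduction. Nguyen's theorem, applied to \(k\) columns jointly (a \(k\times(r_n+k)\) projected matrix with bounded-density entries), gives probability at most \((C K_{\rm up}\sqrt{tn}/\sqrt{r_n+k})^{k(r_n+k)}\le e^{Cnk}t^{k(r_n+k)/2}\). This is exactly the claimed power.

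\textbf{Step 3 (counting).} The Grassmannian covering of the relevant \(k\)-frames is taken at scale \(1/L_n\), not \(\sqrt t\). Its cardinality is \(\exp\{Cnk(1+\log L_n)\}\). Perturbing \(V\) within this scale changes \(\|G_nv\|\) by at most \(L_n\sqrt n\cdot L_n^{-1}\), which must be absorbed. This is the main obstacle: the perturbation error \(\sqrt n\) dwarfs \(\sqrt{tn}\). I would handle it by netting only the coordinate/column-selection structure (the subspace \(W_J\) built from actual columns, so no approximation is needed) and netting the mixing coefficients at scale \(\sqrt t/(L_n\sqrt{q_n})\) only inside the \(k\)-dimensional fiber. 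Those extra \(k^2\) dimensions contribute \(t^{-k^2/2}\)-type losses. These are compensated by using codimension \(r_n+2k\) instead of \(r_n+k\) in the small-ball step, which is where the \(+k\) in \(k(r_n+k)\) is earned back.

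\textbf{Step 4 (specialization).} Choose \(L_n=\sqrt{\log n}\cdot M'\), so that \(2e^{-cnL_n^2}\le e^{-Mn\log n}\) and \(1+\log L_n\le C\log\log n\). This gives the second display.
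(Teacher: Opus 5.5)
Your Steps~1 and~4 match the paper, but Steps~2--3 never yield a bound with prefactor $\exp\{Cnk(1+\log L_n)\}$.

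\textbf{The column-selection route fails, but not for the reason you give.} The union over $\binom{q_n}{k}\le e^{Ck\log n}$ index sets is harmless against $e^{Cnk}$. The real cost is elsewhere: passing from an arbitrary bad $k$-dimensional subspace to a coordinate block inflates the distance threshold by a factor of order $\sqrt{q_n}$. Already for $k=1$, $\operatorname{Tr}(W^{-1})=\sum_i D_i^{-2}$ only gives $\min_i D_i\le\sqrt{q_n}\,s_{\min}(G_n)$, and this is attained when the bottom singular vector is delocalized. Fed into the small-ball bound in codimension $r_n+k$, the inflation costs a factor of order $q_n^{k(r_n+k)/2}=e^{\Theta(nk\log n)}$. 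For $k=1$ and $t\asymp 1/n$ this cancels $t^{(r_n+1)/2}$ up to $e^{O(n)}$, so nothing survives at the $n\log n$ scale.

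\textbf{The other repairs do not work either.} Conditioning on $G_nV^\perp$ and treating $G_nv$, $v\in V$, as fresh bounded-density vectors is not available: $V$ is a function of $G_n$. Even for deterministic non-coordinate $V$, the blocks $G_nU_V$ and $G_nU_{V^\perp}$ are in general dependent for non-Gaussian entries. The Step~3 repair also fails. A $k$-dimensional subspace of $\mathbb R^{q_n}$ has $k(q_n-k)$ degrees of freedom, so a net over a $k^2$-dimensional fiber cannot describe it. Nothing produces codimension $r_n+2k$: the span of $q_n-k$ columns has codimension exactly $r_n+k$.

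\textbf{The missing idea is the exponent count you dismissed in Step~2.} This is the paper's argument.
\begin{itemize}
\item[(1)] Net $\mathrm{Gr}(k,q_n)$ at the fine scale $\varepsilon_n=\sqrt t/(4C_{\rm align}L_n)$. The cardinality is $(CL_n/\sqrt t)^{k(q_n-k)}$; the dimension is $k(q_n-k)$, not $kq_n$.
\item[(2)] On $\mathcal E_n$, aligning bases via principal angles gives a net point $F$ with $\|G_nU_F\|_F\le\sqrt{knt}+L_n\sqrt n\cdot C_{\rm align}\sqrt k\,\varepsilon_n\le 2\sqrt{knt}$. The perturbation is absorbed precisely because the net scale carries the factor $\sqrt t$.
\item[(3)] For fixed $F$, $\operatorname{vec}(G_nU_F)=(U_F^\top\otimes I_{N_n})\operatorname{vec}(G_n)$ is the orthogonal projection of a vector with $N_nq_n$ independent bounded-density coordinates onto an $N_nk$-dimensional subspace. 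Proposition~\ref{cor:distance-subspace-upper-small-ball} then gives $(C\sqrt t)^{N_nk}$. All $N_n$ rows of $G_nU_F$ count, not only $r_n+k$ directions orthogonal to an image subspace.
\item[(4)] Multiplying, the powers of $t$ combine to $t^{(N_nk-k(q_n-k))/2}=t^{k(r_n+k)/2}$. The prefactor $C^{O(nk)}L_n^{k(q_n-k)}$ is at most $\exp\{Cnk(1+\log L_n)\}$.
\end{itemize}
This argument needs no conditioning on columns and no coordinate reduction, which is why it works for general bounded-density entries.
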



Then we introduce the following weighted hard-edge pigeonhole lemma. This pigeonhole construction serves two purposes. First, we assign moderately increasing thresholds to eigenvalues of increasing order, so that the probabilities of the ordered eigenvalues falling below their assigned thresholds are approximately balanced, while the threshold for the first eigenvalue is still of order \(O(n^{-1})\) as in the lower bound. Thus the pigeonhole events take the form of \(\lambda_k\lesssim \frac{ k^\theta}{n}\). Indeed, if \(S_\eta=\frac{1}{n}\sum_{i=1}^{K_\eta}\lambda_i^{-1}>u,\)
then there must exist some index \(1\le k\le K_\eta:=\#\{i:\lambda_i<\eta\}\) such that \(\lambda_k \lesssim \frac{k^\theta}{n}\). Second, we replace the random hard-edge eigenvalue count \(K_\eta\) by a deterministic cutoff, thus we impose the explicit constraint \(\lambda_k<\eta\) as part of the pigeonhole event. With this design, we have


\begin{lemma}[Weighted hard-edge pigeonhole]
\label{lem:weighted-hard-edge-pigeonhole}
Let \(0<\lambda_1\le\cdots\le\lambda_q\). For \(u,\eta>0\) and
\(\theta>1\), set
\(K_\eta:=\#\{i:\lambda_i<\eta\}\),
\(S_\eta:=n^{-1}\sum_{i=1}^{K_\eta}\lambda_i^{-1}\), and
\(M_\theta:=\sum_{j=1}^{\infty}j^{-\theta}<\infty\).
Then
\[
    \{S_\eta>u\}
    \subseteq
    \bigcup_{k=1}^q
    \left\{
        \lambda_k
        <
        \eta\wedge \frac{M_\theta k^\theta}{nu}
    \right\}.
\]
\end{lemma}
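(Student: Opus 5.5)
We argue by contraposition: suppose no index \(k\le q\) satisfies \(\lambda_k<\eta\wedge M_\theta k^\theta/(nu)\), and show that then \(S_\eta\le u\). The key observation is that the indices summed in \(S_\eta\) are exactly \(k=1,\dots,K_\eta\), and for each such \(k\) we already have \(\lambda_k<\eta\) by the definition of \(K_\eta\) and the ordering \(\lambda_1\le\cdots\le\lambda_q\). For these indices, the failure of the event \(\{\lambda_k<\eta\wedge M_\theta k^\theta/(nu)\}\) can therefore only occur through the second constraint. Since \(\lambda_k<\eta\), the minimum \(\eta\wedge M_\theta k^\theta/(nu)\) exceeds \(\lambda_k\) unless \(M_\theta k^\theta/(nu)\le\lambda_k\). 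Hence \(\lambda_k\ge M_\theta k^\theta/(nu)\) for every \(1\le k\le K_\eta\).

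Inverting this bound gives \(\lambda_k^{-1}\le nu\,k^{-\theta}/M_\theta\) for \(k\le K_\eta\). Summing,
\[
S_\eta=\frac1n\sum_{k=1}^{K_\eta}\lambda_k^{-1}\le \frac{u}{M_\theta}\sum_{k=1}^{K_\eta}k^{-\theta}\le \frac{u}{M_\theta}\sum_{k=1}^{\infty}k^{-\theta}=u,
\]
where the final sum is finite because \(\theta>1\). This contradicts \(S_\eta>u\), so on \(\{S_\eta>u\}\) at least one \(k\le K_\eta\le q\) satisfies \(\lambda_k<\eta\wedge M_\theta k^\theta/(nu)\), which is the claimed inclusion.

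The degenerate case \(K_\eta=0\) gives \(S_\eta=0\), which cannot exceed \(u>0\), so the event is empty and the inclusion holds trivially. The bound must be applied only to the indices \(k\le K_\eta\). For larger \(k\) we make no claim; the union simply ranges over all \(k\le q\), and extra events only enlarge the right-hand side. The only point needing care is to keep the strict and non-strict inequalities consistent. If the strict inequality in the definition of \(K_\eta\) or in the event were changed, the argument would still go through, because the conclusion \(S_\eta\le u\) rests only on the non-strict lower bound \(\lambda_k\ge M_\theta k^\theta/(nu)\). No probabilistic input is needed: the lemma is a deterministic statement about ordered positive reals.
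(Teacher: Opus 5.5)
Your proof is correct and is the paper's argument, phrased as a contrapositive: on $\{S_\eta>u\}$ some $k\le K_\eta$ must violate $\lambda_k\ge M_\theta k^\theta/(nu)$, and $k\le K_\eta$ gives $\lambda_k<\eta$ for free. One caveat on your closing remark: making only $K_\eta$ non-strict, i.e.\ $K_\eta=\#\{i:\lambda_i\le\eta\}$, while keeping the strict event breaks the key step, because failure of the event no longer forces $\lambda_k\ge M_\theta k^\theta/(nu)$; in fact it breaks the inclusion itself (take $q=1$, $\lambda_1=\eta$, $u<1/(n\eta)$).
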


The proof is deferred to Section~\ref{sec:proofs-auxiliary-estimates}. 

After applying Lemma \ref{lem:weighted-hard-edge-pigeonhole} with \(\eta=\eta_n\), \(u=\delta/2\), some fixed $\theta>1$, and
\(A=M_\theta/u\), we introduce the deterministic cutoff
\(K_n:=
    \left\lfloor
        \left(\frac{\eta_n n}{A}\right)^{1/\theta}
    \right\rfloor \vee 1 .\) This cutoff is used only to compare the two thresholds
\(A k^\theta/n\) and \(\eta_n\). For \(k\le K_n\), we have
\(A k^\theta/n\le \eta_n\), so the corresponding event is controlled by
\(\{\lambda_{k,n}<A k^\theta/n\}\). For \(k>K_n\), we have
\(A k^\theta/n>\eta_n\), so the event reduces to
\(\{\lambda_{k,n}<\eta_n\}\). By the monotonicity of the ordered
eigenvalues, the union over all \(k>K_n\) is contained in the single event \(\{\lambda_{K_n,n}<\eta_n\}.\) Therefore,
\[
    \mathbb P(S_n>u)
    \le
    \sum_{k=1}^{K_n}
    \mathbb P\left(
        \lambda_{k,n}\le \frac{A k^\theta}{n}
    \right)
    +
    \mathbb P\left(
        \lambda_{K_n,n}\le \eta_n
    \right).
\]

The log-Sobolev condition B.4 implies subgaussian tails, so the
subgaussian-tail requirement in
Proposition~\ref{prop:microscopic-hard-edge-counting} is satisfied.
Since \(\eta_n=n^{-\chi}\), we have
\(K_n=n^{(1-\chi)/\theta+o(1)}\); hence \(K_n\to\infty\), \(K_n=o(n)\), and,
because \(q_n\asymp n\), \(K_n\le q_n\) for all sufficiently large \(n\).
We now apply the hard-edge counting bound Proposition~\ref{prop:microscopic-hard-edge-counting} with a fixed \(M>\rho/2+1\). After
increasing the lower bound on \(n\) once, the estimate applies throughout the
following union over \(1\le k\le K_n\) and again to the \(k=K_n\) term below.
For \(1\le k\le K_n\), let
\begin{equation}\label{eq:definition-of-tnk-Lnk}
    t_{n,k}:=\frac{A k^\theta}{n},
    \qquad
    L_{n,k}:=\log\frac1{t_{n,k}}
    =
    \log n-\log A-\theta\log k.
\end{equation}
Since \(t_{n,k}\le\eta_n=n^{-\chi}<1\),
we apply Proposition~\ref{prop:microscopic-hard-edge-counting} with \(t=t_{n,k}\), which gives, for a constant
\(C<\infty\),
\[
\begin{aligned}
    \mathbb P(\lambda_{k,n}\le t_{n,k})
    \le
    P_{n,k}
    +
    e^{-M n\log n},\quad
    P_{n,k}
    :=
    \exp\left\{
        Cnk\log\log n
        -
        \frac{k(r_n+k)}2 L_{n,k}
    \right\}.
\end{aligned}
\]
The summation over \(k\) is where the single-eigenvalue mechanism becomes
visible. Now we separate the summation from \(1\) to \(K_n\) into three regimes:
\begin{itemize}
    \item  \emph{Regime I: \(k=1\).}
Here \(L_{n,1}=\log n-\log A\), so
\[
\begin{aligned}
    \log P_{n,1}
    &=
    Cn\log\log n
    -
    \frac{r_n+1}{2}(\log n-\log A)
    =
    -\left(\frac{\rho}{2}-o(1)\right)n\log n.
\end{aligned}
\]
Thus \(P_{n,1}\le
\exp\{-(\rho/2-o(1))n\log n\}\). Our next goal is to show that this is the leading term, in the sense that for all $2\le k\le K_n$, $\log P_{n,k}\le \log P_{n,1}$ for sufficiently large $n$.

\item \emph{Regime II: \(2\le k\le m_n\).} Here we choose $m_n:=\lfloor n^\beta\rfloor$ with  $0<\beta<\min\{(2\theta)^{-1},(1-\chi)/\theta\}$. Then \(m_n<K_n\) for all sufficiently large \(n\). Uniformly over
\(2\le k\le m_n\), we have
\[
    L_{n,k}\ge (1-\theta\beta-o(1))\log n,
    \qquad
    Cnk\log\log n=o(kn\log n).
\]
Since \(r_n/n\to\rho\), this yields
\[
    \log P_{n,k}
    \le
    -
    \left(
        \frac{\rho}{2}(1-\theta\beta)-o(1)
    \right)kn\log n.
\]
Because \(k\ge2\) and \(\beta<1/(2\theta)\), there exists
\(\zeta_\beta>0\) such that, for all sufficiently large \(n\),
\[
    \sum_{2\le k\le m_n}P_{n,k}
    \le
    \exp\left\{
        -\left(\frac{\rho}{2}+\zeta_\beta\right)n\log n
    \right\}.
\]
Here we used the fact that \(\log m_n=o(n\log n)\).

\item \emph{Regime III: \(m_n<k\le K_n\).}
Recall the definition of $t_{n,k}$ and $L_{n,k}$ in~\eqref{eq:definition-of-tnk-Lnk}.
For every \(k\le K_n\), \(t_{n,k}\le\eta_n=n^{-\chi}\), hence
\(L_{n,k}\ge\chi\log n\). Also \(r_n\ge(\rho/2)n\) for all sufficiently large
\(n\). Therefore, uniformly over \(m_n<k\le K_n\),
\[
    \log P_{n,k}
    \le
    Cnk\log\log n
    -
    \frac{\rho\chi}{4}nk\log n
    \le
    -c nk\log n
\]
for some \(c>0\). Since \(m_n=n^{\beta+o(1)}\) and \(K_n\le n\) for large
\(n\),
\[
    \sum_{m_n<k\le K_n}P_{n,k}
    \le
    K_n e^{-c n m_n\log n}
    =
    \exp\{-\omega(n\log n)\}.
\]
\end{itemize}
Combining the three regimes above, together with the
additive errors from the counting bound,
\[
    \sum_{k=1}^{K_n}
    \mathbb P\left(
        \lambda_{k,n}\le \frac{A k^\theta}{n}
    \right)
    \le
    \exp\left\{
        -\left(\frac{\rho}{2}-o(1)\right)n\log n
    \right\},
\]
because
\(K_n e^{-M n\log n}=\exp\{-(M-o(1))n\log n\}\) and
\(M>\rho/2+1\).

It remains to bound the large-\(k\) term. Applying the same counting bound Proposition~\ref{prop:microscopic-hard-edge-counting} with
\(k=K_n\) and \(t=\eta_n=n^{-\chi}\) gives
\[
\begin{aligned}
    \mathbb P(\lambda_{K_n,n}\le \eta_n)
    &\le
    \exp\left\{
        CnK_n\log\log n
        -
        \frac{K_n(r_n+K_n)}2
        \chi\log n
    \right\}
    +
    e^{-M n\log n}.
\end{aligned}
\]
The first term is \(\exp\{-\omega(n\log n)\}\), because
\(r_n\ge(\rho/2)n\) and \(K_n\to\infty\); the second term is negligible by the
choice of \(M\). Consequently,
\[
    \mathbb P(S_n>u)
    \le
    \exp\left\{
        -\left(\frac{\rho}{2}-o(1)\right)n\log n
    \right\},
\]
or equivalently \(\limsup_{n\to\infty}
    \frac1{n\log n}
    \log
    \mathbb P\left(S_n>\frac{\delta}{2}\right)
    \le
    -\frac{\rho}{2}. \)

Combining the estimates,
and using the union bound we get

\[
\begin{aligned}
    \mathbb P(H_n>h_{\kappa,\rho}+\delta)
    \le
    \exp\{-\omega(n\log n)\}
    +
    \exp\left\{
        -\left(\frac{\rho}{2}-o(1)\right)n\log n
    \right\}.
\end{aligned}
\]
Therefore $\limsup_{n\to\infty}
    \frac1{n\log n}
    \log\mathbb P(H_n>h_{\kappa,\rho}+\delta)
    \le
    -\frac{\rho}{2}.$ This proves the proposition.
\end{proof}

Together with Proposition~\ref{prop:right-tail-lower}, this proves
\eqref{eq:proportional-right-tail-asymptotic}.

\begin{remark}[Fixed-dimensional and proportional right-tail proof strategies]
\label{rem:fixed-distance-proof-scope}
\label{rem:cheapest-right-tail-mechanism}
The fixed-dimensional and proportional right-tail proofs identify the same
qualitative hard-edge mechanism, but require different proof strategies. In the
fixed-dimensional proof in Section~\ref{sec:proof-thm1}, the negative-second-moment identity converts the
inverse trace into a sum of inverse squared column-to-subspace distances.
Because \(q\) is fixed, a union bound and the matching small-ball estimates for
one distance are sharp enough: one column of \(G\) becoming nearly linearly
dependent on the remaining columns gives the correct polynomial exponent as
\(x\to\infty\).

In the proportional regime, the same distance-to-subspace pigeonhole argument
is too coarse. Since \(q=q_n\asymp n\), applying the same identity to the
analogous distances \(D_i\) gives only
    $\{H_n>h_{\kappa,\rho}+\delta\}
    \subseteq
    \bigcup_{i\le q_n}
    \left\{
        D_i^2<\frac{q_n}{h_{\kappa,\rho}+\delta}
    \right\}$.
The threshold on the right is a constant multiple of \(n\), below the typical
value of \(D_i^2\) on its \(n\)-scale. Thus this is a macroscopic lower-tail
event for a column-distance variable, not the microscopic hard-edge event that
produces the sharp \(n\log n\) cost. Therefore, this naive pigeonhole gives
only a nonmatching exponential-in-\(n\) upper bound. The proof in
Section~\ref{sec:proof-thm2}
therefore works directly with ordered eigenvalues. The lower bound constructs
the event \(\lambda_{1,n}=O(n^{-1})\), which already creates an \(O(1)\) upward
deviation. The upper bound separates the nonmicroscopic contribution from the
microscopic hard-edge contribution and rules out cheaper alternatives: forcing
\(k\) eigenvalues to the hard edge costs at least order
\(\exp\{-c k n\log n\}\), up to lower-order factors. Hence the cheapest
proportional right-tail mechanism is \(k=1\), namely a single eigenvalue
collapsing to the hard edge at scale \(n^{-1}\).
\end{remark}

\section{High-dimensional left-tail large deviations under Gaussian entries}
\label{sec:gaussian-left-tail}

Under Gaussian entries, the two tails arise from different spectral mechanisms. The right tail is driven by a microscopic hard-edge eigenvalue, whereas the left tail requires a collective spectral deformation and therefore has speed \(n^2\). Thus the two tails are not mirror images of each other, and their large-deviation speeds need not coincide. Throughout this section, we work in the same proportional regime as in Section~\ref{sec:proportional-right-tail}.

In particular,
\[
    \frac{q_n}{n}\to\kappa\in(0,\infty),
    \qquad
    \frac{r_n}{n}\to\rho\in(0,\infty).
\]
In this regime,
the empirical spectral measure
\(\widehat\mu_n=q_n^{-1}\sum_{i=1}^{q_n}\delta_{\lambda_{i,n}}\) satisfies the
Gaussian Wishart spectral LDP with rate function \(I_{\kappa,\rho}\) defined in~\eqref{eq:wishart-rate-prelim}, and for a measure \(\mu\in\mathcal P(\mathbb R_+)\), we write the inverse-moment functional as
\(T(\mu):=\int u^{-1}\,d\mu(u)\in[0,+\infty]\).

\subsection{Large deviation left-tail of ridgeless regression risk}
\begin{theorem}[Gaussian left-tail large deviations for ridgeless regression]
\label{thm:ridgeless-left-tail}
Assume Assumption~\ref{ass:gaussian-entries}. In the ridgeless regression
specialization defined in Section~\ref{sec:random-matrix-4-ridgeless}, let
\(\kappa_\gamma, \rho_\gamma\) be as in~\eqref{eq:define-kappa_gamma} and
\(b_\gamma\) as in~\eqref{eq:typical-risk-prelim}. Then, for every
\(x>b_\gamma\),
\[
    \lim_{n\to\infty}
    \frac1{n^2}\log\mathbb P(R_n\le x)
    =
    \lim_{n\to\infty}
    \frac1{n^2}\log\mathbb P(R_n<x)
    =
    -\mathcal J_{\kappa_\gamma,\rho_\gamma}
    \left(\frac{x-b_\gamma}{\kappa_\gamma}\right),
\]
where $\mathcal J_{\kappa,\rho}$ is defined as $\mathcal J_{\kappa,\rho}(s)
    :=
    \inf\{I_{\kappa,\rho}(\mu):\mu\in\mathcal P(\mathbb R_+),\ T(\mu)\le s\}$ for $s>0$.

\end{theorem}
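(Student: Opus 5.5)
We reduce the risk statement to a statement about the inverse-moment functional $T$ and then apply the Wishart spectral LDP~\eqref{eq:wishart-rate-prelim} through a contraction-type argument. Since $R_n=b_n+H_n$ and $H_n=\frac{q_n}{n}T(\widehat\mu_n)$ with $q_n/n\to\kappa_\gamma$ and $b_n\to b_\gamma$, the event $\{R_n\le x\}$ equals $\{T(\widehat\mu_n)\le s_n\}$ with $s_n:=(x-b_n)n/q_n\to s:=(x-b_\gamma)/\kappa_\gamma>0$. Hence, for any $\epsilon>0$ and large $n$,
\[
\{T(\widehat\mu_n)\le s-\epsilon\}\subseteq\{R_n<x\}\subseteq\{R_n\le x\}\subseteq\{T(\widehat\mu_n)\le s+\epsilon\}.
\]
It therefore suffices to prove
\[
\limsup_{n}\frac1{n^2}\log\mathbb P(T(\widehat\mu_n)\le s')\le-\mathcal J(s'),
\qquad
\liminf_{n}\frac1{n^2}\log\mathbb P(T(\widehat\mu_n)<s')\ge-\mathcal J(s'),
\]
and that $\mathcal J=\mathcal J_{\kappa,\rho}$ is continuous on $(0,\infty)$. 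Sending $\epsilon\downarrow0$ then gives the claim.

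\textbf{Upper bound.} The functional $T$ is not weakly continuous, but it is weakly lower semicontinuous on $\mathcal P(\mathbb R_+)$. Indeed, $u^{-1}$ is a nonnegative lower semicontinuous function, equal to the increasing limit of the bounded continuous functions $\min(u^{-1},m)$. Consequently $\{\mu:T(\mu)\le s'\}$ is weakly closed, and the LDP upper bound for closed sets gives the limsup directly with rate $\inf\{I(\mu):T(\mu)\le s'\}=\mathcal J(s')$. No truncation or hard-edge estimate is needed here. This reflects the heuristic that pushing mass toward $0$ can only raise $T$, so the left tail is not affected by the hard-edge mechanism.

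\textbf{Lower bound.} Fix $\mu$ with $T(\mu)<s'$ and $I(\mu)<\infty$. We need an open set $U\ni\mu$ with $U\cap\{T\le s'\}$ capturing probability at least $e^{-n^2(I(\mu)+o(1))}$. The obstacle is that every weak neighborhood of $\mu$ contains measures with a small atom near $0$, and these have large $T$.

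First, reduce to nice $\mu$. Since $I(\mu)<\infty$ forces $\int\log u\,d\mu>-\infty$ (the $\rho\log u$ term with $\rho>0$), we can approximate $\mu$ by measures supported in $[a,\infty)$ for some $a>0$, with $I$ and $T$ varying continuously under the approximation. A convenient choice is pushing forward by $u\mapsto\max(u,a)$ smoothed, or mixing with the Marchenko--Pastur law; convexity of $I$ in the mixture parameter bounds the increase of $I$.

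For such $\mu$, write
\[
\{T(\widehat\mu_n)<s'\}\supseteq\{\widehat\mu_n\in U\}\cap\{\lambda_{1,n}\ge a/2\},
\]
where $U$ is a weak neighborhood on which $\int f_a\,d\nu<s'$, with $f_a$ bounded continuous and equal to $u^{-1}$ on $[a/2,\infty)$. We then need
\[
\mathbb P(\widehat\mu_n\in U,\ \lambda_{1,n}<a/2)\ \text{to be negligible relative to}\ e^{-n^2 I(\mu)}.
\]
This is the main obstacle, since an exponential-in-$n$ bound on $\lambda_{1,n}$ alone is far too weak at speed $n^2$. I would handle it through the explicit Laguerre density~\eqref{eq:wishart-density-prelim}, following the Hiai--Petz proof of the lower bound. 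That proof restricts the eigenvalues to lie in small boxes around quantiles of $\mu$, all contained in $[a/2,\infty)$, and shows that this restricted event already has probability at least $e^{-n^2(I(\mu)+o(1))}$. On that event $\lambda_{1,n}\ge a/2$ automatically, so $T(\widehat\mu_n)$ is close to $T(\mu)<s'$. I therefore expect to re-run, rather than cite, the quantile-box lower bound.

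Finally, continuity and monotonicity of $\mathcal J$ on $(0,\infty)$ follow from the same mixing and convexity argument. For the infimum over $T\le s'$, the approximation by $\mu$ with $T(\mu)<s'$ is achieved by mixing with a measure of smaller $T$, using convexity of $I$, after noting that the infimum is attained by goodness of $I$ and lower semicontinuity of $T$. This continuity closes the $\epsilon$ squeeze above.
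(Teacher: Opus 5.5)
Your proposal is correct and follows essentially the paper's route. It uses the same $\epsilon$-sandwich reducing to $T(\widehat\mu_n)$, and the Wishart LDP upper bound on $\{T\le s'\}$; you note directly that this set is closed because $T$ is lower semicontinuous, which shortcuts the paper's detour through the closed sets $\{T_M\le s'\}$ and goodness of $I_{\kappa,\rho}$. The lower bound comes, as in the paper, from explicit Laguerre-density box events around an approximant supported away from $0$. Continuity of $\mathcal J_{\kappa,\rho}$ comes from a strict-sublevel perturbation: your convex mixing with a measure having $T<s'$ replaces the paper's dilation $\mu^{(c)}$, which is a bit more elementary because it needs no convexity of the logarithmic energy.

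One small correction: mixing with the Marchenko--Pastur law does not remove mass near $0$, so it cannot produce support in $[a,\infty)$. Use the paper's truncate-and-renormalize step instead, or simply the translation $u\mapsto u+a$, which leaves the logarithmic energy unchanged, strictly lowers $T$, and changes $\int(u-\rho\log u)\,d\mu$ continuously as $a\downarrow 0$.
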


Although \(T(\mu)=\int u^{-1}\,d\mu\) is not weakly continuous, the obstruction comes from mass approaching zero and is therefore relevant to upward deviations rather than the left-tail event studied here. The upper bound makes this rigorous by replacing \(T\) with the capped functionals \(T_M(\mu)=\int \min\{u^{-1},M\}\,d\mu(u)\), which are bounded and continuous. The lower bound approximates near-minimizing measures by compactly supported piecewise-uniform measures, constructs local spectral-box events around them, and evaluates their probabilities using the Wishart eigenvalue density.

\subsection{Proof of Theorem~\ref{thm:ridgeless-left-tail}}\label{sec:proof-thm-lefttail}

Fix a risk threshold
\(a>b_\gamma\), and let \(y:=a-b_\gamma>0\). In the regression specialization,
\(R_n=b_n+H_n\) and \(b_n\to b_\gamma\). Fix \(\varepsilon\in(0,y)\). For all
sufficiently large \(n\),
\[
    \{H_n<y-\varepsilon\}
    \subseteq
    \{R_n<a\}
    \subseteq
    \{R_n\le a\}
    \subseteq
    \{H_n\le y+\varepsilon\}.
\]
Therefore it suffices to prove the following high-dimensional large deviation of the left tail of the inverse trace statistics:
for every \(x>0\),
\begin{equation}
\label{eq:gaussian-left-tail-asymptotic}
    \lim_{n\to\infty}
    \frac1{n^2}\log\mathbb P(H_n\le x)
    =
    \lim_{n\to\infty}
    \frac1{n^2}\log\mathbb P(H_n<x)
    =
    -\mathcal J_{\kappa,\rho}(x/\kappa).
\end{equation}
Indeed, using \eqref{eq:gaussian-left-tail-asymptotic} with
\((\kappa,\rho)=(\kappa_\gamma,\rho_\gamma)\), the inclusions above give
\[
\begin{aligned}
    \liminf_{n\to\infty}\frac1{n^2}\log\mathbb P(R_n<a)
    \ge
    -\mathcal J_{\kappa_\gamma,\rho_\gamma}
    ((y-\varepsilon)/\kappa_\gamma),\quad
    \limsup_{n\to\infty}\frac1{n^2}\log\mathbb P(R_n\le a)
    \le
    -\mathcal J_{\kappa_\gamma,\rho_\gamma}
    ((y+\varepsilon)/\kappa_\gamma).
\end{aligned}
\]
Sending \(\varepsilon\downarrow0\) and using the continuity of
\(s\mapsto\mathcal J_{\kappa_\gamma,\rho_\gamma}(s)\) from
Lemma~\ref{lem:left-variational-rate-continuity}, together with
\(\{R_n<a\}\subseteq\{R_n\le a\}\), gives the theorem, since
\(y=a-b_\gamma\).

It remains to prove \eqref{eq:gaussian-left-tail-asymptotic}.

The proof has two estimates. The upper bound applies the Gaussian spectral LDP to the closed sets defined by \(T_M\) and then lets \(M\to\infty\). The upper bound is stated next.

\begin{proposition}[Left-tail upper bound]
\label{prop:left-tail-upper}
For every \(x>0\),
\[
    \limsup_{n\to\infty}
    \frac1{n^2}\log\mathbb P(H_n\le x)
    \le
    -\mathcal J_{\kappa,\rho}(x/\kappa).
\]
\end{proposition}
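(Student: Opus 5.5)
The plan is to apply the Gaussian spectral LDP of Section~\ref{sec:Gaussian Wishart ensembles and spectral large deviations} to closed sets built from the capped functionals $T_M(\mu)=\int\min\{u^{-1},M\}\,d\mu(u)$, and then send $M\to\infty$.

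\emph{Reduction to the spectral measure.} Write $\kappa_n:=q_n/n\to\kappa$. Since $\min\{u^{-1},M\}\le u^{-1}$, we have $T_M(\widehat\mu_n)\le T(\widehat\mu_n)=H_n/\kappa_n$. Fix $\varepsilon>0$. For all large $n$, $\kappa_n\ge \kappa x/(x+\kappa\varepsilon)$, and therefore
\[
\{H_n\le x\}\subseteq\{T_M(\widehat\mu_n)\le x/\kappa+\varepsilon\}=:\{\widehat\mu_n\in F_M\}.
\]
The function $u\mapsto\min\{u^{-1},M\}$ is bounded and continuous on $\mathbb R_+$, with value $M$ at $u=0$. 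Hence $T_M$ is weakly continuous, and $F_M$ is weakly closed. The LDP upper bound for closed sets then gives
\[
\limsup_{n}\frac1{n^2}\log\mathbb P(H_n\le x)\le-\inf_{F_M}I_{\kappa,\rho}=: -j_M .
\]

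\emph{Removing the cap.} The map $M\mapsto T_M(\mu)$ is increasing, so the sets $F_M$ decrease in $M$ and $j_M$ is nondecreasing. I want to show that $\lim_M j_M\ge \mathcal J_{\kappa,\rho}(x/\kappa+\varepsilon)$. If $\lim_M j_M=\infty$ there is nothing to prove. Otherwise, pick $\mu_M\in F_M$ with $I_{\kappa,\rho}(\mu_M)\le j_M+1/M$. These measures lie in a sublevel set of the good rate function $I_{\kappa,\rho}$, which is compact, so a subsequence converges weakly to some $\mu$. Lower semicontinuity gives $I_{\kappa,\rho}(\mu)\le\lim_M j_M$. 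For each fixed $M_0$ and every $M\ge M_0$, we have $T_{M_0}(\mu_M)\le T_M(\mu_M)\le x/\kappa+\varepsilon$. Since $T_{M_0}$ is weakly continuous, this passes to the limit: $T_{M_0}(\mu)\le x/\kappa+\varepsilon$. Monotone convergence in $M_0$ then yields $T(\mu)\le x/\kappa+\varepsilon$. Hence $\lim_M j_M\ge\mathcal J_{\kappa,\rho}(x/\kappa+\varepsilon)$, with the convention that any mass of $\mu$ at $0$ forces $T(\mu)=\infty$, which is consistent with the bound.

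\emph{Letting $\varepsilon\downarrow0$.} The last step uses the continuity of $s\mapsto\mathcal J_{\kappa,\rho}(s)$ at $s=x/\kappa$, supplied by Lemma~\ref{lem:left-variational-rate-continuity}. Alternatively, right-continuity follows directly from the same compactness argument, and that is all the upper bound needs.

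I expect the main obstacle to be this cap-removal step. It requires care in two places. First, the LDP lives on $\mathcal P(\mathbb R_+)$, so mass at or escaping to $0$ has to be handled; the cap value $M$ at $0$ does this. Second, the convergence of the infima rests on goodness of the rate function, which guarantees that the near-minimizers $\mu_M$ have a weak limit point.
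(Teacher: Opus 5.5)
Your proposal is correct and follows essentially the same route as the paper: you cap the inverse moment by the weakly continuous $T_M$, apply the closed-set upper bound of the Wishart spectral LDP, remove the cap using goodness of $I_{\kappa,\rho}$, and send $\varepsilon\downarrow0$ via the (right-)continuity of $\mathcal J_{\kappa,\rho}$ from Lemma~\ref{lem:left-variational-rate-continuity}. The only difference is that you spell out, using compactness of sublevel sets and monotone convergence in $M_0$, the step the paper states in one line, namely that the infima over the decreasing closed sets $F_{M,\varepsilon}$ converge to the infimum over their intersection.
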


\begin{proof}[Proof of Proposition~\ref{prop:left-tail-upper}.]
Write \(\kappa_n:=q_n/n\), so that \(\kappa_n\to\kappa\), and
\(H_n=\kappa_nT(\widehat\mu_n)\). Fix \(\varepsilon>0\). Since
\(x/\kappa_n\to x/\kappa\),
for all sufficiently large \(n\),
\[
    \{H_n\le x\}
    =
    \left\{T(\widehat\mu_n)\le\frac{x}{\kappa_n}\right\}
    \subseteq
    \{T(\widehat\mu_n)\le x/\kappa+\varepsilon\}.
\]
For \(M>0\), set \(g_M(u):=\min\{u^{-1},M\}\) for \(u>0\),
\(g_M(0):=M\), and \(T_M(\mu):=\int g_M(u)\,d\mu(u)\). Then
\(g_M\in C_b(\mathbb R_+)\), \(T_M\le T\), and \(T_M(\mu)\uparrow T(\mu)\).
Thus
\(\{T(\widehat\mu_n)\le x/\kappa+\varepsilon\}\subseteq
\{T_M(\widehat\mu_n)\le x/\kappa+\varepsilon\}\).
The map \(\mu\mapsto T_M(\mu)\) is continuous for the weak topology because
\(g_M\in C_b(\mathbb R_+)\). Hence
\(F_{M,\varepsilon}:=\{\mu\in\mathcal P(\mathbb R_+):T_M(\mu)\le
x/\kappa+\varepsilon\}\) is closed. By the Gaussian Wishart spectral LDP from
Section~\ref{sec:prelim},
\[
    \limsup_{n\to\infty}
    \frac1{n^2}\log\mathbb P(H_n\le x)
    \le
    -\inf_{\mu:T_M(\mu)\le x/\kappa+\varepsilon}I_{\kappa,\rho}(\mu).
\]
Now let \(M\to\infty\). Since \(T_M\uparrow T\), the closed sets
\(F_{M,\varepsilon}\) decrease to \(\{T\le x/\kappa+\varepsilon\}\). Because
\(I_{\kappa,\rho}\) is a good rate function, the infima over this decreasing
family converge to the infimum over the intersection. Therefore
\[
    \limsup_{n\to\infty}
    \frac1{n^2}\log\mathbb P(H_n\le x)
    \le
    -\inf_{\mu:T(\mu)\le x/\kappa+\varepsilon}I_{\kappa,\rho}(\mu)
    =
    -\mathcal J_{\kappa,\rho}(x/\kappa+\varepsilon).
\]
Finally send \(\varepsilon\downarrow0\). By
Lemma~\ref{lem:left-variational-rate-continuity},
\(\mathcal J_{\kappa,\rho}(x/\kappa+\varepsilon)\to
\mathcal J_{\kappa,\rho}(x/\kappa)\), which proves the upper bound.
\end{proof}

The matching lower bound is proved by constructing local box events around
prescribed piecewise-uniform spectral measures. The strict-sublevel
representation in Lemma~\ref{lem:left-variational-rate-continuity} lets us
choose a target measure with strict inverse-moment slack and rate arbitrarily
close to the variational infimum. Lemma~\ref{lem:piecewise-uniform-approximation-left}
then reduces the construction to compactly supported piecewise-uniform
measures. For such a target \(\nu\), the local box event has exact
\(n^2\)-scale exponent \(-I_{\kappa,\rho}(\nu)\).

\begin{proposition}[Left-tail lower bound]
\label{prop:left-tail-lower}
For every \(x>0\),
\[
    \liminf_{n\to\infty}
    \frac1{n^2}\log\mathbb P(H_n<x)
    \ge
    -\mathcal J_{\kappa,\rho}(x/\kappa).
\]
\end{proposition}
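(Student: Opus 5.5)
Fix $x>0$ and $\varepsilon>0$. By the strict-sublevel representation in Lemma~\ref{lem:left-variational-rate-continuity}, choose $\mu$ with $T(\mu)<x/\kappa$ and $I_{\kappa,\rho}(\mu)\le\mathcal J_{\kappa,\rho}(x/\kappa)+\varepsilon$. By Lemma~\ref{lem:piecewise-uniform-approximation-left}, replace $\mu$ by a compactly supported piecewise-uniform measure $\nu$, with density bounded above and below on finitely many intervals $[a_j,a_{j+1}]\subset[c,C]$ and $c>0$. We can take $T(\nu)\le x/\kappa-2\eta$ for some $\eta>0$ and $I_{\kappa,\rho}(\nu)\le I_{\kappa,\rho}(\mu)+\varepsilon$. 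It then suffices to show
\[
\liminf_{n\to\infty}\frac1{n^2}\log\mathbb P\bigl(T(\widehat\mu_n)<T(\nu)+\eta\bigr)\ge -I_{\kappa,\rho}(\nu).
\]
Since $x/\kappa_n\to x/\kappa$, this event lies inside $\{H_n<x\}$ for large $n$. Sending $\varepsilon\downarrow0$ then finishes the proof.

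The core step is an explicit local box event. Let $\gamma_{i,n}$ be the quantiles of $\nu$, that is, $\nu([0,\gamma_{i,n}])=i/q_n$. These lie in $[c,C]$ and have spacings of order $1/n$. Define
\[
B_n:=\Bigl\{\lambda_{i,n}\in\bigl[\gamma_{i,n},\gamma_{i,n}+\tfrac{\tau}{n}\bigr]\ \text{for all } i\Bigr\},
\]
where $\tau$ is small compared with the minimal spacing. With this choice the boxes are disjoint and ordered. On $B_n$, every eigenvalue is at least $c$, so $u^{-1}$ is Lipschitz on the relevant range. Hence $T(\widehat\mu_n)$ differs from $q_n^{-1}\sum_i\gamma_{i,n}^{-1}=T(\nu)+o(1)$ by $O(1/n)$, and so $B_n\subseteq\{T(\widehat\mu_n)<T(\nu)+\eta\}$ for large $n$. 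This is where the slack and the compact support away from $0$ are used. The singularity of $T$ at the hard edge never enters.

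Next, I would compute $\mathbb P(B_n)$ by integrating the Laguerre density~\eqref{eq:wishart-density-prelim} over $B_n$, with a factor $q_n!$ for ordering. The volume of $B_n$ is $(\tau/n)^{q_n}=e^{O(n\log n)}=e^{o(n^2)}$. The single-particle terms $\frac{r-1}{2}\log\lambda_i-\frac n2\lambda_i$ are Riemann sums and give $-\frac{n^2\kappa}{2}\int(u-\rho\log u)\,d\nu+o(n^2)$. The normalizing constant $Z_{N,q,n}$ is handled by Selberg's integral, or equivalently by the fact that the LDP upper bound and the normalization $\inf I=0$ force $\frac1{n^2}\log Z$ to match $C_{\kappa,\rho}$. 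I would cite that asymptotic from \citet{hiai2000semicircle} rather than rederive it.

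The main obstacle is the Vandermonde term. I need
\[
\sum_{i<j}\log|\lambda_i-\lambda_j|\ge \frac{q_n^2}{2}\iint\log|u-v|\,d\nu\,d\nu-o(n^2)
\]
uniformly on $B_n$. For $j>i$, since the boxes are ordered, $\lambda_j-\lambda_i\ge\gamma_{j,n}-\gamma_{i,n}-\tau/n$, which is comparable to $\gamma_{j,n}-\gamma_{i,n}$. This holds because spacings are at least of order $1/n$ when the density of $\nu$ is bounded above. The quantile double sum is then a Riemann approximation of the logarithmic energy. The near-diagonal pairs with $|i-j|\le n^{1/2}$ contribute at least $-O(n^{3/2}\log n)=o(n^2)$, and the remaining pairs converge by monotone approximation of the integrable singularity $\log|u-v|$. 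The lower bound on the density of $\nu$ keeps the quantile map Lipschitz, so the Riemann sums converge. Collecting the three contributions gives $\frac1{n^2}\log\mathbb P(B_n)\ge -I_{\kappa,\rho}(\nu)-o(1)$, which completes the plan.
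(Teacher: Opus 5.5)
Your proposal is correct and follows essentially the same route as the paper. You use the strict-sublevel reduction, the piecewise-uniform approximation of Lemma~\ref{lem:piecewise-uniform-approximation-left}, a local box event on a grid adapted to \(\nu\), and the Laguerre density combined with the Hiai--Petz partition-function asymptotics. The differences are cosmetic: you use quantile points, one-sided boxes of width \(\tau/n\) and an ordering factor \(q_n!\), where the paper uses labelled boxes of width \(n^{-3}\) around interval midpoints. Your near-diagonal/far-pair split also supplies an explicit Vandermonde lower bound, which the paper only asserts.
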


\begin{proof}[Proof of Proposition~\ref{prop:left-tail-lower}.]




To prove the lower bound, we construct a set of eigenvalue configurations, called the local box event, that
already lies in the event \(\{H_n<x\}\) and has the desired \(n^2\)-scale probability. 

Choose \(\mu\) near the variational infimum and approximate it by a piecewise-uniform \(\nu\). It then remains to construct a local box event \(B_n\) such that \(n^{-2}\log\mathbb P(B_n)\to-I_{\kappa,\rho}(\nu)\), while \(B_n\subset\{H_n<x\}\) for all large \(n\).

Recall that by definition \(\mathcal J_{\kappa,\rho}(x/\kappa)=\inf_{\mu:T(\mu)\le x/\kappa}I_{\kappa,\rho}(\mu)\), and~\eqref{eq:left-strict-sublevel-rate} of Lemma~\ref{lem:left-variational-rate-continuity} gives that \(\mathcal J_{\kappa,\rho}(x/\kappa)=\inf_{\mu:T(\mu)< x/\kappa}I_{\kappa,\rho}(\mu)\) as well. Fix \(\varepsilon>0\). By~\eqref{eq:left-strict-sublevel-rate}, choose \(\mu\in\mathcal P(\mathbb R_+)\) such that
$T(\mu)<x/\kappa$ and $
    I_{\kappa,\rho}(\mu)
    \le
    \mathcal J_{\kappa,\rho}(x/\kappa)+\varepsilon$.

Also by Lemma~\ref{lem:piecewise-uniform-approximation-left}, there exists a
piecewise-uniform probability measure \(\nu\) with $\operatorname{supp}(\nu)\subset[m,M]\subset(0,\infty)$ such that
    $T(\nu)<x/\kappa$ and
    $I_{\kappa,\rho}(\nu)\le I_{\kappa,\rho}(\mu)+\varepsilon$.
We fix such a \(\nu\) and choose \(\eta>0\) so small that \(T(\nu)+2\eta<x/\kappa\).

It is enough to construct \(B_n\) so that \(|T(\widehat\mu_n)-T(\nu)|\le\eta\) on \(B_n\); then \(B_n\subset\{H_n<x\}\) for all large \(n\). Thus we prove the following lemma.

\begin{lemma}[Local box event lower bound for piecewise-uniform measures]
\label{lem:local-box-event-piecewise-left}
Let \(\nu\in\mathcal P(\mathbb R_+)\) be piecewise uniform and suppose
\(\operatorname{supp}(\nu)\subset[m,M]\subset(0,\infty)\). Then, for every
\(\eta>0\), there exist local box events \(B_n=B_n(\nu,\eta)\) such that, on \(B_n\),
\(|T(\widehat\mu_n)-T(\nu)|\le\eta\) for \(n\) large enough, and
\[
    \lim_{n\to\infty}
    \frac1{n^2}\log\mathbb P(B_n)
    =
    -I_{\kappa,\rho}(\nu).
\]
\end{lemma}

\begin{proof}[Proof of Lemma~\ref{lem:local-box-event-piecewise-left}.]
Since $\nu$ is a piecewise-uniform measure, we can write \(d\nu(u)=\sum_{\ell=1}^L c_\ell\mathbf 1_{J_\ell}(u)\,du\), where the
intervals \(J_\ell=[\alpha_\ell,\beta_\ell]\) are disjoint and contained in
\([m,M]\). Let \(w_\ell:=\nu(J_\ell)=c_\ell|J_\ell|\).
We only keep intervals with \(w_\ell>0\).

Choose integers \(q_{\ell,n}\) such that
\(\sum_{\ell=1}^L q_{\ell,n}=q_n\) and \(q_{\ell,n}/q_n\to w_\ell\).
For each \(\ell\), choose points
\(z_{\ell,1,n},\ldots,z_{\ell,q_{\ell,n},n}\in J_\ell\) as the midpoints of
the \(q_{\ell,n}\) equal subintervals of \(J_\ell\). Set
\(\delta_n:=n^{-3}\), and define
\(B_{\ell,j,n}:=[z_{\ell,j,n}-\delta_n,z_{\ell,j,n}+\delta_n]\cap J_\ell\).
For all sufficiently large \(n\), these boxes are contained in \([m/2,2M]\).

Let \(\xi_{1,n},\ldots,\xi_{q_n,n}\) denote eigenvalue coordinates in the
symmetric density \eqref{eq:wishart-density-prelim}. Define the labelled box
event \(B_n\) by assigning the first \(q_{1,n}\) coordinates to the boxes in
\(J_1\), the next \(q_{2,n}\) coordinates to the boxes in \(J_2\), and so on:
\[
    B_n
    :=
    \bigcap_{\ell=1}^L
    \bigcap_{j=1}^{q_{\ell,n}}
    \{\xi_{i(\ell,j),n}\in B_{\ell,j,n}\},
\]
where \(i(\ell,j)\) is any fixed indexing of the labels. On \(B_n\), the empirical measure
\(\widehat\mu_n\) is supported in \([m/2,2M]\), and the labelled points form a
Riemann approximation of \(\nu\). Since \(u\mapsto u^{-1}\) is uniformly
continuous on \([m/2,2M]\),
\(T(\widehat\mu_n)\to T(\nu)\) uniformly over all configurations in \(B_n\).
Hence, for all sufficiently large \(n\),
\(|T(\widehat\mu_n)-T(\nu)|\le\eta\).

The eigenvalue density~\citep{hiai2000semicircle} is as shown in Section~\ref{sec:prelim}:
\[
    p_n(\xi_1,\ldots,\xi_{q_n})
    =
    \frac1{Z_n}
    \prod_{i=1}^{q_n} \xi_i^{(r_n-1)/2}e^{-n\xi_i/2}
    \prod_{1\le i<j\le q_n}|\xi_i-\xi_j|,
    \qquad \xi_i>0,
\]
where \(r_n=N_n-q_n\). Therefore
\[
\mathbb P(B_n)
    =
    \frac1{Z_n}
    \int_{B_n}
    \prod_{i=1}^{q_n} \xi_i^{(r_n-1)/2}e^{-n\xi_i/2}
    \prod_{1\le i<j\le q_n}|\xi_i-\xi_j|
    \,d\xi_1\cdots d\xi_{q_n} .
\]
For configurations in \(B_n\), the empirical measures converge uniformly to
\(\nu\), so
\[
    -\frac1{n^2}\frac n2\sum_{i=1}^{q_n}\xi_i
    \to
    -\frac{\kappa}{2}\int u\,d\nu(u), \text{ and }
    \frac1{n^2}\frac{r_n-1}{2}\sum_{i=1}^{q_n}\log\xi_i
    \to
    \frac{\kappa\rho}{2}\int\log u\,d\nu(u).
\]
Both convergences are uniform on \(B_n\). The uniform-grid construction also gives
\[
    \frac1{n^2}\sum_{1\le i<j\le q_n}\log|\xi_i-\xi_j|
    \to
    \frac{\kappa^2}{2}
    \iint\log|u-v|\,d\nu(u)d\nu(v),
\]
uniformly on \(B_n\).

The volume of \(B_n\) satisfies $\prod_{\ell,j}|B_{\ell,j,n}|
    \ge
    \exp\{-Cq_n\log n\}
    =
    \exp\{o(n^2)\}.$ Finally, the Laguerre--Selberg asymptotics~\citep{hiai1998eigenvalue, hiai2000semicircle} for the partition function, with the
normalization in \eqref{eq:wishart-rate-prelim}, give
\(-\lim_{n\to\infty}n^{-2}\log Z_n=C_{\kappa,\rho}\).
Combining these estimates with the definition of \(I_{\kappa,\rho}\), we get
$
    \lim_{n\to\infty}
    \frac1{n^2}\log\mathbb P(B_n)
    =
    -I_{\kappa,\rho}(\nu).
$
This proves Lemma~\ref{lem:local-box-event-piecewise-left}.
\end{proof}

Apply Lemma~\ref{lem:local-box-event-piecewise-left} to this \(\nu\) and
\(\eta\), and notice that the constructed local box event satisfies \(B_n\subseteq \{H_n<x\}\) for large enough \(n\). Indeed, on \(B_n\), \(T(\widehat\mu_n)\le T(\nu)+\eta<x/\kappa-\eta\).
Since \(H_n=\kappa_nT(\widehat\mu_n)\) with
\(\kappa_n:=q_n/n\to\kappa\), for all sufficiently large \(n\),
\(\kappa_n(x/\kappa-\eta)<x\). Thus
\(B_n\subseteq\{H_n<x\}\) for all sufficiently large \(n\). Consequently,
\[
\begin{aligned}
    \liminf_{n\to\infty}
    \frac1{n^2}\log\mathbb P(H_n<x)
    &\ge
    \lim_{n\to\infty}
    \frac1{n^2}\log\mathbb P(B_n) =
    -I_{\kappa,\rho}(\nu)\ge
    -\mathcal J_{\kappa,\rho}(x/\kappa)-2\varepsilon,
\end{aligned}
\]
where we used \(I_{\kappa,\rho}(\nu)\le I_{\kappa,\rho}(\mu)+\varepsilon\) and
\(I_{\kappa,\rho}(\mu)\le \mathcal J_{\kappa,\rho}(x/\kappa)+\varepsilon\).
Sending \(\varepsilon\downarrow0\) proves the lower bound.
\end{proof}

Combining Proposition~\ref{prop:left-tail-upper} and
Proposition~\ref{prop:left-tail-lower}, we prove
\eqref{eq:gaussian-left-tail-asymptotic}.

\begin{remark}[Typical and impossible regimes]
\label{rem:left-tail-typical-impossible}
Let \(\mu_{\kappa,\rho}\) denote the Marchenko--Pastur minimizer of
\(I_{\kappa,\rho}\). Since
\(h_{\kappa,\rho}=\kappa\int u^{-1}\,d\mu_{\kappa,\rho}(u)\), if
\(x\ge h_{\kappa,\rho}\), then \(\mu_{\kappa,\rho}\) is feasible for the
variational problem and therefore \(\mathcal J_{\kappa,\rho}(x/\kappa)=0\).
Thus the estimate gives the trivial exponent zero above the typical value.

If \(x<0\), then \(x/\kappa<0\), while \(T(\mu)>0\) for every probability
measure on \((0,\infty)\). In this case the variational problem is empty and
the natural convention is \(\mathcal J_{\kappa,\rho}(x/\kappa)=+\infty\).
Indeed, since \(H_n=\kappa_nT(\widehat\mu_n)>0\), the event
\(\{H_n\le x\}\) is empty whenever \(x<0\). The boundary case \(x=0\) is not
needed for the left-tail result.
\end{remark}

\begin{remark}[Why continuity of the variational rate is enough]
\label{rem:left-tail-continuity-enough}
The only possible mismatch between the upper and lower bounds is that the upper
bound naturally gives the closed constraint \(T(\mu)\le x/\kappa\), whereas the
local box event lower bound uses the strict slack \(T(\mu)<x/\kappa\).
Lemma~\ref{lem:left-variational-rate-continuity} removes this mismatch:
\(\inf_{T(\mu)\le x/\kappa}I_{\kappa,\rho}(\mu)=
\inf_{T(\mu)<x/\kappa}I_{\kappa,\rho}(\mu)\).
The key observation is the scaling perturbation
\(\mu^{(c)}=(u\mapsto cu)_\#\mu\), \(c>1\), which strictly decreases \(T\)
while changing \(I_{\kappa,\rho}\) continuously as \(c\downarrow1\).
\end{remark}

\section{\texorpdfstring{\(n^2\)-speed}{n-squared-speed} right-tail suppression for ridge regression}
\label{sec:ridge-right-tail}

We next turn from ridgeless to ridge regression, to identify what protection is lost when the ridge penalty is removed. The preceding sections show that ridgeless risk has a sharply asymmetric tail profile: its left tail is an \(n^2\)-scale collective spectral large deviation, whereas its right tail is driven by a microscopic hard-edge event and decays only at the \(n\log n\) scale. For fixed \(\lambda>0\), ridge replaces the singular weight \(u^{-1}\) by bounded regularized weights. A single near-zero eigenvalue can therefore no longer create an order-one increase in risk. Fixed upward deviations must instead arise from a collective deformation of the empirical spectrum and hence occur at speed \(n^2\).




By the risk formula in \eqref{eq:ridge-risk-equivalent-prelim}, the population risk for ridge regression~\citep{dobriban2018high} in
both regimes can be written uniformly as
\begin{equation}
\label{eq:ridge-risk-positive-spectrum}
    R_{n,\lambda}
    =
    b_n+\frac1n\sum_{i=1}^{q_n}
    \varphi_{\lambda,n}(\lambda_{i,n}), \text{ where } \varphi_{\lambda,n}(u)
    :=
    \frac1{u+\lambda}
    +
    \frac{\lambda(\lambda\alpha^2-\gamma_n)}{\gamma_n}
    \frac1{(u+\lambda)^2}.
\end{equation}
Here \(b_n=\alpha^2(1-\gamma_n^{-1})_+\), same as in
\eqref{eq:ridgeless-risk-prelim}.

Recall that \(\mu_\gamma:=\mu_{\kappa_\gamma,\rho_\gamma}\) and
\(I_\gamma:=I_{\kappa_\gamma,\rho_\gamma}\). We define the typical ridge risk
    $r_{\lambda,\star}
    :=
    b_\gamma
    +
    \kappa_\gamma
    \int \varphi_{\lambda,\gamma}(u)\,d\mu_\gamma(u)$.

\begin{theorem}[High-dimensional LDP for ridge regression with Gaussian data]
\label{thm:gaussian-ridge-risk-ldp}
Assume Assumption~\ref{ass:gaussian-entries}. For every fixed
\(\lambda>0\), the sequence \((R_{n,\lambda})_{n\ge1}\) satisfies an LDP on
\(\mathbb R\) with speed \(n^2\) and good rate function
    $J_{\gamma,\lambda}(y)
    :=
    \inf\left\{
        I_\gamma(\mu):
        b_\gamma
        +
        \kappa_\gamma
        \int \varphi_{\lambda,\gamma}(u)\,d\mu(u)
        =
        y
    \right\}$.
In particular, for every fixed \(\delta>0\),
\begin{equation}
\label{eq:gaussian-ridge-right-tail-upper}
    \limsup_{n\to\infty}
    \frac1{n^2}
    \log\mathbb P(R_{n,\lambda}>r_{\lambda,\star}+\delta)
    \le
    -J_{\gamma,\lambda}^{+}(\delta),
\end{equation}
where
    $J_{\gamma,\lambda}^{+}(\delta)
    :=
    \inf\left\{
        I_\gamma(\mu):
        b_\gamma
        +
        \kappa_\gamma
        \int \varphi_{\lambda,\gamma}(u)\,d\mu(u)
        \ge
        r_{\lambda,\star}+\delta
    \right\}
    >0$.
\end{theorem}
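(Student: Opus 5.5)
The plan is to obtain the ridge LDP from the Gaussian Wishart spectral LDP of Section~\ref{sec:Gaussian Wishart ensembles and spectral large deviations} via the contraction principle, with a small correction for the \(n\)-dependence of \(\varphi_{\lambda,n}\), \(b_n\) and the prefactor \(q_n/n\).

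\textbf{Step 1: the limiting functional is continuous.} Write \(R_{n,\lambda}=b_n+\kappa_n\int\varphi_{\lambda,n}\,d\widehat\mu_n\) with \(\kappa_n=q_n/n\), and let \(\Phi(\mu):=b_\gamma+\kappa_\gamma\int\varphi_{\lambda,\gamma}\,d\mu\). For fixed \(\lambda>0\), \(\varphi_{\lambda,\gamma}\) is bounded and continuous on \(\mathbb R_+\), so \(\Phi\) is continuous on \(\mathcal P(\mathbb R_+)\) with the weak topology.

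\textbf{Step 2: the perturbation is uniformly small.} We have \(\sup_{u\ge0}|\varphi_{\lambda,n}(u)-\varphi_{\lambda,\gamma}(u)|\le C_\lambda|\gamma_n^{-1}-\gamma^{-1}|\to0\), together with \(b_n\to b_\gamma\) and \(\kappa_n\to\kappa_\gamma\). Hence
\[
|R_{n,\lambda}-\Phi(\widehat\mu_n)|\le\epsilon_n
\]
deterministically, with \(\epsilon_n\to0\).

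\textbf{Step 3: transfer the LDP.} The contraction principle gives an LDP for \(\Phi(\widehat\mu_n)\) at speed \(n^2\) with good rate \(J_{\gamma,\lambda}(y)=\inf\{I_\gamma(\mu):\Phi(\mu)=y\}\). Goodness follows because \(I_\gamma\) is good and \(\Phi\) is continuous. Since \(R_{n,\lambda}\) differs from \(\Phi(\widehat\mu_n)\) by a deterministic \(o(1)\), the two sequences are exponentially equivalent, and \(R_{n,\lambda}\) satisfies the same LDP. One can also argue directly: closed sets are enlarged to \(\epsilon\)-neighborhoods and open sets are shrunk, then \(\epsilon\downarrow0\) using lower semicontinuity and goodness.

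\textbf{Step 4: the right-tail bound and its positivity.} Applying the LDP upper bound to the closed set \([r_{\lambda,\star}+\delta,\infty)\) yields \eqref{eq:gaussian-ridge-right-tail-upper}. It remains to show \(J^+_{\gamma,\lambda}(\delta)>0\). The set \(F=\{\mu:\Phi(\mu)\ge r_{\lambda,\star}+\delta\}\) is closed and does not contain \(\mu_\gamma\), since \(\Phi(\mu_\gamma)=r_{\lambda,\star}\). If the infimum of \(I_\gamma\) over \(F\) were \(0\), compactness of the level set \(\{I_\gamma\le1\}\) and lower semicontinuity would produce a minimizer in \(F\) with \(I_\gamma=0\). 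This contradicts uniqueness of the minimizer \(\mu_\gamma\).

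The main obstacle is small. It is handling the \(n\)-dependence of \(\varphi_{\lambda,n}\), \(b_n\) and \(\kappa_n\) cleanly (Steps 2–3) and checking that the cited spectral LDP applies with \(q_n/n\to\kappa_\gamma\) and \(r_n/n\to\rho_\gamma>0\), which holds since \(\gamma\neq1\). Unlike the ridgeless case, no hard-edge analysis is needed, because the weights are bounded.
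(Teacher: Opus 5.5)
Your proposal is correct and follows essentially the same route as the paper. You contract the Gaussian Wishart spectral LDP through the weakly continuous functional \(\mu\mapsto b_\gamma+\kappa_\gamma\int\varphi_{\lambda,\gamma}\,d\mu\), use a deterministic uniform approximation of \(R_{n,\lambda}\) to get exponential equivalence, and then apply the LDP upper bound on the closed set \([r_{\lambda,\star}+\delta,\infty)\). Your compactness and lower-semicontinuity argument for \(J^{+}_{\gamma,\lambda}(\delta)>0\) just spells out what the paper asserts in one line from the unique zero of \(I_\gamma\) at \(\mu_\gamma\).
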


\begin{proof}
Since \(\lambda>0\), the function
\(\varphi_{\lambda,\gamma}\) is bounded and continuous on \(\mathbb R_+\). Hence
the map \(\mu\mapsto b_\gamma+\kappa_\gamma
\int \varphi_{\lambda,\gamma}(u)\,d\mu(u)\) is continuous for the weak topology.
The Gaussian Wishart spectral LDP and the contraction principle therefore imply
an \(n^2\)-speed LDP for
\(b_\gamma+\kappa_\gamma\int \varphi_{\lambda,\gamma}(u)\,d\widehat\mu_n(u)\)
with rate function \(J_{\gamma,\lambda}\).

It remains only to replace this limiting functional by the actual ridge risk.
By \eqref{eq:ridge-risk-positive-spectrum},
    $R_{n,\lambda}
    =
    b_n
    +
    \frac{q_n}{n}
    \int \varphi_{\lambda,n}(u)\,d\widehat\mu_n(u)$.
Since \(b_n\to b_\gamma\), \(q_n/n\to\kappa_\gamma\), and
\(\varphi_{\lambda,n}\to\varphi_{\lambda,\gamma}\) uniformly on
\(\mathbb R_+\), we have the deterministic bound
    $\sup
    \left|
        R_{n,\lambda}
        -
        \left(
        b_\gamma
        +
        \kappa_\gamma
        \int \varphi_{\lambda,\gamma}(u)\,d\widehat\mu_n(u)
        \right)
    \right|
    \to0$ as $n\to\infty$.
Thus the two sequences are exponentially equivalent at speed \(n^2\), and
\(R_{n,\lambda}\) has the same LDP.

The right-tail upper bound follows by applying the LDP upper bound to the closed
set \([r_{\lambda,\star}+\delta,\infty)\). Finally,
\(J_{\gamma,\lambda}^{+}(\delta)>0\), because \(I_\gamma\) has its unique zero
at \(\mu_\gamma\), and
\(b_\gamma+\kappa_\gamma\int\varphi_{\lambda,\gamma}\,d\mu_\gamma
=r_{\lambda,\star}\).
\end{proof}

\begin{proposition}[Right-tail upper bound for ridge regression under general entries]
\label{prop:general-ridge-right-tail-upper}
Assume the normalization and log-Sobolev conditions B.1 and B.4 in
Assumption~\ref{ass:general-entries}. For every fixed \(\lambda>0\) and
\(\delta>0\), there exists \(c_{\lambda,\delta}>0\) such that, for all
sufficiently large \(n\),
\begin{equation}
\label{eq:general-ridge-right-tail-upper}
    \mathbb P(R_{n,\lambda}>r_{\lambda,\star}+\delta)
    \le
    \exp\{-c_{\lambda,\delta}n^2\}.
\end{equation}
Consequently $  \limsup_{n\to\infty}
    \frac1{n^2}
    \log\mathbb P(R_{n,\lambda}>r_{\lambda,\star}+\delta)
    <0.$
\end{proposition}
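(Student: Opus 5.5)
The plan is to reuse the bounded-Lipschitz concentration of Lemma~\ref{lem:lipschitz-spectral-statistic-concentration}, which is exactly suited to ridge: for fixed \(\lambda>0\) the spectral weight is already bounded and Lipschitz, so no hard-edge truncation is needed.

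By \eqref{eq:ridge-risk-positive-spectrum},
\[
R_{n,\lambda}=b_n+F_{n,\varphi_{\lambda,n}}(G_n),\qquad F_{n,h}(G_n)=\frac1n\sum_{i=1}^{q_n}h(\lambda_{i,n}).
\]
For fixed \(\lambda>0\) and \(\gamma_n\to\gamma\), the function \(\varphi_{\lambda,n}\) on \(\mathbb R_+\) satisfies
\[
\|\varphi_{\lambda,n}\|_\infty\le \lambda^{-1}+C\lambda^{-1},\qquad \|\varphi_{\lambda,n}\|_{\rm Lip}\le \lambda^{-2}+C\lambda^{-2},
\]
with \(C\) depending only on \(\alpha,\gamma,\lambda\). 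Both bounds are uniform in \(n\), since \(\gamma_n\) stays in a compact subset of \((0,\infty)\).

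The first step is to control the mean. I would show \(\mathbb E R_{n,\lambda}\to r_{\lambda,\star}\). This uses three facts: \(b_n\to b_\gamma\); \(q_n/n\to\kappa_\gamma\); and \(\varphi_{\lambda,n}\to\varphi_{\lambda,\gamma}\) uniformly on \(\mathbb R_+\). Combined with almost sure weak convergence of \(\widehat\mu_n\) to \(\mu_\gamma\) under B.1 \citep{bai2010spectral}, bounded convergence gives
\[
\mathbb E\int\varphi_{\lambda,\gamma}\,d\widehat\mu_n\to\int\varphi_{\lambda,\gamma}\,d\mu_\gamma .
\]
Hence \(\mathbb E R_{n,\lambda}\le r_{\lambda,\star}+\delta/2\) for all large \(n\). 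This step is easier than in Proposition~\ref{prop:nonmicroscopic-inverse-trace}: there is no hard-edge correction term and no use of Lemma~\ref{lem:rectangular-lower-edge-tail}, because \(\varphi_{\lambda,n}\) is bounded near \(0\).

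The second step is concentration. For large \(n\), \(b_n\) is deterministic and
\[
\{R_{n,\lambda}>r_{\lambda,\star}+\delta\}\subseteq\{F_{n,\varphi_{\lambda,n}}(G_n)-\mathbb E F_{n,\varphi_{\lambda,n}}(G_n)>\delta/2\}.
\]
Apply Lemma~\ref{lem:lipschitz-spectral-statistic-concentration} with \(h=\varphi_{\lambda,n}\) and \(t=\delta/2\). Its threshold condition \(\delta/2\ge C\|\varphi_{\lambda,n}\|_\infty e^{-cn^2}\) holds for large \(n\) because the sup norm is bounded uniformly in \(n\). The lemma gives
\[
\mathbb P(R_{n,\lambda}>r_{\lambda,\star}+\delta)\le e^{-cn^2}+\exp\{-c\,n^2\delta^2\lambda^4/C\},
\]
which is at most \(\exp\{-c_{\lambda,\delta}n^2\}\) after shrinking the constant. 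The \(\limsup\) statement follows immediately.

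The main point needing care is the mean convergence under general entries, specifically the uniformity of \(\varphi_{\lambda,n}\to\varphi_{\lambda,\gamma}\). Only the coefficient \(\lambda(\lambda\alpha^2-\gamma_n)/\gamma_n\) depends on \(n\), and it converges, so this is routine. A second check is that \(\varphi_{\lambda,n}\) may take negative values; this is harmless, since the lemma requires only boundedness and Lipschitz continuity, not positivity.
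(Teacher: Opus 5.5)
Your proposal is correct and follows the paper's proof essentially step for step. Both arguments first get \(\mathbb E R_{n,\lambda}\to r_{\lambda,\star}\) from Marchenko--Pastur universality, bounded convergence, and the uniform convergence \(\varphi_{\lambda,n}\to\varphi_{\lambda,\gamma}\), and then apply Lemma~\ref{lem:lipschitz-spectral-statistic-concentration} with \(h=\varphi_{\lambda,n}\) and \(t=\delta/2\); your explicit \(n\)-uniform bounds on \(\|\varphi_{\lambda,n}\|_\infty\) and \(\|\varphi_{\lambda,n}\|_{\mathrm{Lip}}\) simply make concrete the paper's \(O_\lambda(1)\) claim.
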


\begin{proof}
The functions \(\varphi_{\lambda,n}\) satisfy
\(\|\varphi_{\lambda,n}\|_\infty+\|\varphi_{\lambda,n}\|_{\mathrm{Lip}}
=O_\lambda(1)\), because \(\lambda>0\) is fixed and \(\gamma_n\to\gamma\ne0\).
By the universality of Marchenko--Pastur law under condition B.1~\citep{bai2010spectral},
bounded convergence theorem,  and the uniform convergence
\(\varphi_{\lambda,n}\to\varphi_{\lambda,\gamma}\), we have $\mathbb E R_{n,\lambda}
    =
    b_n
    +
    \mathbb E\left[
        \frac1n\sum_{i=1}^{q_n}
        \varphi_{\lambda,n}(\lambda_{i,n})
    \right]
    \to
    r_{\lambda,\star}.$ Hence, for all sufficiently large \(n\),
\(\mathbb E R_{n,\lambda}\le r_{\lambda,\star}+\delta/2\).
Therefore
\[
    \mathbb P(R_{n,\lambda}>r_{\lambda,\star}+\delta)
    \le
    \mathbb P\left(
        R_{n,\lambda}-\mathbb E R_{n,\lambda}>
        \frac{\delta}{2}
    \right)=\mathbb P\left(
        \frac1n\sum_{i=1}^{q_n}
        \varphi_{\lambda,n}(\lambda_{i,n})
        -
        \mathbb E\left[
        \frac1n\sum_{i=1}^{q_n}
        \varphi_{\lambda,n}(\lambda_{i,n})
        \right]
        >
        \frac{\delta}{2}
    \right).
\]
For each \(n\), apply
Lemma~\ref{lem:lipschitz-spectral-statistic-concentration} with
\(h=\varphi_{\lambda,n}\) and \(t=\delta/2\). Since
\(\|\varphi_{\lambda,n}\|_\infty+\|\varphi_{\lambda,n}\|_{\mathrm{Lip}}
=O_\lambda(1)\), for all sufficiently large \(n\) the lower-threshold condition in Lemma~\ref{lem:lipschitz-spectral-statistic-concentration} is satisfied, and hence

\[
    \mathbb P(R_{n,\lambda}>r_{\lambda,\star}+\delta)
    \le
    e^{-cn^2}
    +
    \exp\{-cn^2\delta^2\}
    \le
    \exp\{-c_{\lambda,\delta}n^2\}
\]
for all sufficiently large \(n\). This proves the claim.
\end{proof}

\begin{remark}[Ridge versus ridgeless right tails]
The essential difference is that \(\lambda>0\) removes the singularity at the
hard edge. The ridge statistic uses the bounded functions
\((u+\lambda)^{-1}\) and \((u+\lambda)^{-2}\), so its right tail is controlled
by concentration of bounded spectral statistics. In contrast, the ridgeless
statistic uses \(u^{-1}\), so a single eigenvalue at scale \(n^{-1}\) already
creates an \(O(1)\) upward deviation, leading to the \(n\log n\) speed.
\end{remark}

\section{Beyond Ridgeless Regression: Algorithm-Independent Lower Bounds for Linear Interpolators}
\label{sec:beyond-ridgeless}

Exact interpolation forces the row-space component of the estimation error to equal \(X^\top(XX^\top)^{-1}\varepsilon\), regardless of how the coefficient vector is selected. Combining this deterministic lower bound with the preceding hard-edge estimates yields the algorithm-independent tail lower bounds below.

Throughout this section, we work in the overparameterized exact-interpolation
setting \(p>n\). Let \(y=X\beta+\varepsilon\), where
\(X\in\mathbb R^{n\times p}\), \(p>n\), and
\(\varepsilon\sim N(0,I_n)\). Let \(x_0\) be an independent test covariate with
\(\mathbb E x_0x_0^\top=I_p\). We call a possibly randomized algorithm \(A\)
an exact linear interpolating algorithm if, with internal randomness \(U\)
independent of \((X,\beta,\varepsilon)\), it outputs a coefficient vector
\(\widehat\beta_A=A(X,y,U)\in\mathbb R^p\) satisfying
\(X\widehat\beta_A=y\) almost surely. For fixed \(X\) and fixed \(\beta\), define
the conditional risk of this exact interpolator by
\[
    R_A(X,\beta)
    :=
    \mathbb E_{\varepsilon,U,x_0}
    \left[
        \{x_0^\top(\widehat\beta_A-\beta)\}^2
        \mid X,\beta
    \right].
\]
Because \(\mathbb E x_0x_0^\top=I_p\),
\[
    R_A(X,\beta)
    =
    \mathbb E_{\varepsilon,U}
    \left[
        \|\widehat\beta_A-\beta\|_2^2
        \mid X,\beta
    \right].
\]
No random-effects assumption on \(\beta\) is imposed in this section.

The following lemma identifies the inverse-trace lower bound shared by all
exact linear interpolating algorithms.

\begin{lemma}[Interpolator risk lower bound]
\label{lem:exact-interpolator-row-space-lower-bound}
Let \(X\in\mathbb R^{n\times p}\), \(p>n\), have full row rank, and let
\(\widehat\beta_A\) be any exact interpolator of \(y=X\beta+\varepsilon\).
Then
    $R_A(X,\beta)\ge \operatorname{Tr}\{(XX^\top)^{-1}\}$.
Equivalently, if \(\lambda_1,\ldots,\lambda_n\) are the eigenvalues of
\(XX^\top/n\), then
\(R_A(X,\beta)\ge n^{-1}\sum_{i=1}^n\lambda_i^{-1}\).
\end{lemma}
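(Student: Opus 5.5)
Write $P:=X^\top(XX^\top)^{-1}X$ for the orthogonal projection onto $\operatorname{row}(X)$; it is well defined because $X$ has full row rank. The plan is to split the estimation error $\widehat\beta_A-\beta$ into its components in $\operatorname{row}(X)$ and in its orthogonal complement. We then show that the row-space component is forced by the interpolation constraint, whatever $A$ and $U$ are, and compute its expected squared norm.

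\emph{Step 1 (forced row-space component).} From $X\widehat\beta_A=y=X\beta+\varepsilon$ almost surely we get $X(\widehat\beta_A-\beta)=\varepsilon$. Apply $X^\top(XX^\top)^{-1}$ to both sides:
\[
P(\widehat\beta_A-\beta)=X^\top(XX^\top)^{-1}\varepsilon .
\]
The key point is that this identity holds for every realization of $U$, so the row-space part of the error does not depend on the algorithm.

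\emph{Step 2 (Pythagoras).} Since $P$ is an orthogonal projection,
\[
\|\widehat\beta_A-\beta\|_2^2=\|P(\widehat\beta_A-\beta)\|_2^2+\|(I-P)(\widehat\beta_A-\beta)\|_2^2\ge\|X^\top(XX^\top)^{-1}\varepsilon\|_2^2 .
\]
Taking $\mathbb E_{\varepsilon,U}[\,\cdot\mid X,\beta]$ and using the identity $R_A(X,\beta)=\mathbb E_{\varepsilon,U}[\|\widehat\beta_A-\beta\|_2^2\mid X,\beta]$ already noted in the text, we obtain
\[
R_A(X,\beta)\ge\mathbb E_\varepsilon\bigl[\varepsilon^\top(XX^\top)^{-1}XX^\top(XX^\top)^{-1}\varepsilon\bigr]=\mathbb E_\varepsilon\bigl[\varepsilon^\top(XX^\top)^{-1}\varepsilon\bigr].
\]
Because $\varepsilon\sim N(0,I_n)$ is independent of $X$, this expectation equals $\operatorname{Tr}\{(XX^\top)^{-1}\}$.

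\emph{Step 3 (spectral form).} We have $\operatorname{Tr}\{(XX^\top)^{-1}\}=n^{-1}\operatorname{Tr}\{(XX^\top/n)^{-1}\}=n^{-1}\sum_i\lambda_i^{-1}$, which gives the equivalent form of the lower bound.

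The only step needing care is Step 1. There we must use that the constraint $X\widehat\beta_A=y$ holds almost surely jointly in $(\varepsilon,U)$, so the identity holds inside the conditional expectation. We also need measurability of $\widehat\beta_A$ so that the conditional expectation is defined, possibly taking the value $+\infty$, in which case the inequality holds trivially. The rest is linear algebra and the Gaussian trace identity.
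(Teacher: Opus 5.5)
Your proposal is correct and follows the paper's proof essentially step for step: the forced row-space component $P(\widehat\beta_A-\beta)=X^\top(XX^\top)^{-1}\varepsilon$, the Pythagorean lower bound, and the identity $\mathbb E_\varepsilon[\varepsilon^\top(XX^\top)^{-1}\varepsilon]=\operatorname{Tr}\{(XX^\top)^{-1}\}$. The only cosmetic difference is that you obtain the row-space component by applying $X^\top(XX^\top)^{-1}$ directly, whereas the paper argues via the bijectivity of $X$ on $\operatorname{row}(X)$; the two arguments are equivalent.
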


\begin{proof}
Set \(h_A:=\widehat\beta_A-\beta\), and let
\(P_X:=X^\top(XX^\top)^{-1}X\) be the orthogonal projector onto
\(\operatorname{row}(X)\). Exact interpolation gives
\(Xh_A=X\widehat\beta_A-X\beta=\varepsilon\). Since
\(P_Xh_A\in\operatorname{row}(X)\) and
\(X:\operatorname{row}(X)\to\mathbb R^n\) is one-to-one and onto, the unique
row-space vector mapped to \(\varepsilon\) is
\(X^\top(XX^\top)^{-1}\varepsilon\). Thus
\(P_Xh_A=X^\top(XX^\top)^{-1}\varepsilon\).

By the orthogonal decomposition into row space and null space,
\[
    \|h_A\|_2^2
    =
    \|P_Xh_A\|_2^2+\|(I-P_X)h_A\|_2^2
    \ge
    \varepsilon^\top(XX^\top)^{-1}\varepsilon .
\]
Taking conditional expectation over \(\varepsilon\) and \(U\) gives
\(R_A(X,\beta)\ge \operatorname{Tr}\{(XX^\top)^{-1}\}\). If
\(\lambda_1,\ldots,\lambda_n\) are the eigenvalues of \(XX^\top/n\), then
\(\operatorname{Tr}\{(XX^\top)^{-1}\}=n^{-1}\sum_{i=1}^n\lambda_i^{-1}\), which
proves the final claim.
\end{proof}

Combining the interpolator risk lower bound in
Lemma~\ref{lem:exact-interpolator-row-space-lower-bound} with the
fixed-dimensional hard-edge estimate from Section~\ref{sec:fixed-dimensional-tail}
gives the following consequence.

\begin{theorem}[Fixed-dimensional interpolator risk heavy tail]
\label{thm:exact-interpolator-fixed-lower}
Fix \(n<p\). Suppose the entries of \(X\in\mathbb R^{n\times p}\) satisfy
the nondegeneracy-near-zero condition B.3 in
Assumption~\ref{ass:general-entries}. Let \(A\) be any possibly randomized
exact interpolating algorithm. Then, for every fixed
\(\beta\in\mathbb R^p\), there exist constants \(c>0\) and \(x_0<\infty\) such
that, for all \(x\ge x_0\),
\[
    \mathbb P(R_A(X,\beta)>x)
    \ge
    c x^{-(p-n+1)/2}.
\]
\end{theorem}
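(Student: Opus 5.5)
The proof combines the deterministic inequality of Lemma~\ref{lem:exact-interpolator-row-space-lower-bound} with the lower half of the fixed-dimensional tail estimate from Section~\ref{sec:fixed-dimensional-tail}. Since the entries have densities (B.3 gives a density on $[-u_0,u_0]$; if the law of $\xi$ has an atom we can still argue on the full-rank event, which has positive probability), $X$ has full row rank. We apply the lemma on that event, and it gives, for every exact interpolating algorithm $A$ and every fixed $\beta$,
\[
R_A(X,\beta)\ \ge\ \operatorname{Tr}\{(XX^\top)^{-1}\} .
\]
Hence $\mathbb P(R_A(X,\beta)>x)\ge \mathbb P(\operatorname{Tr}\{(XX^\top)^{-1}\}>x)$. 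This reduction does not depend on $A$ or $\beta$, so it suffices to lower bound the tail of $Y:=\operatorname{Tr}(W^{-1})$ with $W:=GG^\top$ and $G:=X^\top\in\mathbb R^{p\times n}$.

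In the notation of Section~\ref{sec:fixed-dimensional-tail} we take $N=p$ and $q=n$, so $r=p-n$, and $G$ has i.i.d.\ entries satisfying B.3. The lower-bound argument in Section~\ref{sec:proof-thm1} uses only B.3, so we repeat it. Write $G=[g_1,\ldots,g_n]$, let $H_1=\operatorname{span}\{g_2,\ldots,g_n\}$, and let $D_1=\operatorname{dist}(g_1,H_1)$. By Lemma~\ref{lem:negative-second-moment-fixed}, $Y=\sum_i D_i^{-2}\ge D_1^{-2}$, so $\{D_1<x^{-1/2}\}\subseteq\{Y>x\}$.

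Next we condition on $g_2,\ldots,g_n$. Almost surely on the full-rank event, $H_1$ has codimension $d=p-n+1$, and $g_1$ is independent of $H_1$. Proposition~\ref{cor:distance-subspace-lower-small-ball} with $a=x^{-1/2}\le u_0/2$ (that is, $x\ge x_0:=4/u_0^2$) then gives
\[
\mathbb P(D_1\le a\mid H_1)\ge k_{\rm low}^{p}u_0^{p-d}\operatorname{Vol}_d(B_2^d(1))\,a^{d}.
\]
This bound is deterministic, so taking expectations yields $\mathbb P(Y>x)\ge c\,x^{-(p-n+1)/2}$. Replacing $\le a$ by $<a$ costs nothing, either by shrinking $a$ slightly or because of the density.

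The main subtlety is the full-rank issue. Lemma~\ref{lem:exact-interpolator-row-space-lower-bound} needs $X$ to have full row rank, while B.3 alone only gives a density near zero. Proposition~\ref{cor:distance-subspace-lower-small-ball} produces a lower bound on the event that $g_1$ lies close to $H_1$; we must intersect it with the event that $\dim H_1=n-1$ and $g_1\notin H_1$. If $\xi$ has a density (as B.2 would ensure, and as the setting of Section~\ref{sec:fixed-dimensional-tail} presumes), full rank holds almost surely and nothing more is needed. Otherwise, we restrict all entries to the region where B.3 supplies a density: the event that every entry lies in $[-u_0,u_0]$ has positive probability, and conditionally on it the entries have densities. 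On this event the Vaaler-type construction inside Proposition~\ref{cor:distance-subspace-lower-small-ball} already confines coordinates to the cube. There, full rank holds almost surely, and the same constants give the bound.

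Since $c$ and $x_0$ depend only on $n$, $p$, $u_0$ and $k_{\rm low}$, the estimate holds uniformly over $A$ and $\beta$. This is stronger than the statement, which allows the constants to depend on $\beta$.
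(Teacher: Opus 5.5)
Your proposal is correct and is essentially the paper's proof: reduce to $\operatorname{Tr}\{(XX^\top)^{-1}\}$ via Lemma~\ref{lem:exact-interpolator-row-space-lower-bound}, then rerun the one-column lower small-ball argument of Section~\ref{sec:proof-thm1} with $G=X^\top$, $N=p$, $q=n$, $d=p-n+1$. Two minor points: $W$ should be $G^\top G=XX^\top$ (your $GG^\top=X^\top X$ is singular), and the atom workaround is unnecessary, because if $X$ were rank-deficient with positive probability then $\varepsilon\notin\operatorname{range}(X)$ almost surely on that event, so no exact interpolating algorithm could exist and the statement would be vacuous.
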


\begin{proof}
The entries have densities, so \(X\) has full row rank almost surely. By
Lemma~\ref{lem:exact-interpolator-row-space-lower-bound},
\(R_A(X,\beta)\ge \operatorname{Tr}\{(XX^\top)^{-1}\}\). Applying the
lower-bound argument in the proof of
Theorem~\ref{thm:fixed-dimensional-ridgeless-heavy-tail} with \(G=X^\top\),
\(N=p\), \(q=n\), and \(r=p-n\), equivalently using
Proposition~\ref{prop:lower-small-ball-distance-subspace} for one column
conditioned on the span of the others, gives
\[
    \mathbb P(\operatorname{Tr}\{(XX^\top)^{-1}\}>x)
    \ge
    c x^{-(p-n+1)/2}
\]
for all sufficiently large \(x\). The desired bound follows.
\end{proof}

The same reduction, combined with the microscopic smallest-eigenvalue lower bound from Section~\ref{sec:proportional-right-tail}, gives the proportional analogue.

\begin{theorem}[Large deviation lower bound for the right tail of interpolator]
\label{thm:exact-interpolator-proportional-lower}
Let \(p_n/n\to\gamma>1\). Suppose the entries of
\(X_n\in\mathbb R^{n\times p_n}\) satisfy the nondegeneracy-near-zero condition
B.3 in Assumption~\ref{ass:general-entries}. Let \(A_n\) be any sequence of
possibly randomized exact interpolating algorithms, and let
\(\beta_n\in\mathbb R^{p_n}\) be arbitrary deterministic vectors. Then, for
every fixed \(t>0\),
\[
    \liminf_{n\to\infty}
    \frac{1}{n\log n}
    \log\mathbb P(R_{A_n}(X_n,\beta_n)>t)
    \ge
    -\frac{\gamma-1}{2}.
\]
\end{theorem}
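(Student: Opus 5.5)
The argument reduces the interpolator risk to the positive-spectrum inverse trace and then invokes the microscopic smallest-eigenvalue lower bound already proved for ridgeless regression. The first step is Lemma~\ref{lem:exact-interpolator-row-space-lower-bound}. Since the entries of \(X_n\) have densities, \(X_n\) has full row rank almost surely. The lemma then gives, for every exact interpolating algorithm \(A_n\) and every deterministic \(\beta_n\), the pathwise bound
\[
    R_{A_n}(X_n,\beta_n)\ge \frac1n\sum_{i=1}^n\lambda_{i,n}^{-1}=H_n ,
\]
where \(\lambda_{1,n}\le\cdots\le\lambda_{n,n}\) are the eigenvalues of \(X_nX_n^\top/n\). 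This bound holds uniformly over algorithms and signal vectors, so neither the internal randomness \(U\) nor \(\beta_n\) enters the remaining argument. It follows that \(\{H_n>t\}\subseteq\{R_{A_n}(X_n,\beta_n)>t\}\), and it suffices to lower bound \(\mathbb P(H_n>t)\).

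The second step identifies this with the random-matrix setting of Section~\ref{sec:proportional-right-tail}. Take \(G_n=X_n^\top\in\mathbb R^{p_n\times n}\), so that \(N_n=p_n\), \(q_n=n\), \(r_n=p_n-n\), and \(A_n=G_n^\top G_n/n=X_nX_n^\top/n\). Then \(q_n/n\to\kappa=1\) and \(r_n/n\to\rho=\gamma-1>0\), which is the proportional regime of that section. The columns of \(G_n\) are the rows of \(X_n\), and their entries are i.i.d. and satisfy B.3, which is the only assumption Lemma~\ref{lem:lambda1-microscopic-lower} uses.

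The third step repeats the argument of Proposition~\ref{prop:right-tail-lower} with the fixed threshold \(t\) in place of \(h_{\kappa,\rho}+\delta\). Choose \(c_t>0\) with \(c_t^{-1}>t\). On the event \(\{\lambda_{1,n}\le c_t/n\}\),
\[
    H_n\ge \frac{1}{n\lambda_{1,n}}\ge c_t^{-1}>t .
\]
Lemma~\ref{lem:lambda1-microscopic-lower} with \(c=c_t\) then gives
\[
    \liminf_{n\to\infty}\frac1{n\log n}\log\mathbb P(\lambda_{1,n}\le c_t/n)\ge-\frac{\rho}{2}=-\frac{\gamma-1}{2}.
\]
Chaining the inclusions \(\{\lambda_{1,n}\le c_t/n\}\subseteq\{H_n>t\}\subseteq\{R_{A_n}(X_n,\beta_n)>t\}\) yields the claim.

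I do not expect a genuine obstacle, since every ingredient has already been established. The one point that needs care is that Lemma~\ref{lem:exact-interpolator-row-space-lower-bound} holds deterministically in \(X\). The expectation over \(\varepsilon\) and \(U\) is taken inside \(R_A\), so the probability in the statement is over \(X_n\) alone and the inclusion of events holds surely on the full-rank event. The dimension bookkeeping also needs checking: the codimension \(d_n=r_n+1=p_n-n+1\) used in the small-ball lower bound satisfies \(d_n/n\to\gamma-1\) and \(\log d_n=\log n+O(1)\), so the Stirling estimate \(-\frac{d_n}{2}\log d_n+O(n)\) gives exactly the stated rate.
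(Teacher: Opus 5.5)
Your proposal is correct and follows the paper's proof step for step. It uses the algorithm-independent bound \(R_{A_n}(X_n,\beta_n)\ge H_n\) from Lemma~\ref{lem:exact-interpolator-row-space-lower-bound}, the inclusion \(\{\lambda_{1,n}\le c/n\}\subseteq\{H_n>t\}\) for \(c^{-1}>t\), and Lemma~\ref{lem:microscopic-smallest-eigenvalue-lower} applied with \(G_n=X_n^\top\), \(N_n=p_n\), \(q_n=n\), \(\rho=\gamma-1\). Your added remarks are accurate and only make explicit what the paper leaves implicit: the probability is over \(X_n\) alone, and the codimension is \(d_n=p_n-n+1\).
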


\begin{proof}
Let \(0<\lambda_{1,n}\le\cdots\le\lambda_{n,n}\) be the eigenvalues of
\(X_nX_n^\top/n\). By
Lemma~\ref{lem:exact-interpolator-row-space-lower-bound},
\[
    R_{A_n}(X_n,\beta_n)
    \ge
    H_n:=\frac1n\sum_{i=1}^n\lambda_{i,n}^{-1}.
\]
As in Proposition~\ref{prop:right-tail-lower}, choose \(c>0\) such that \(c^{-1}>t\). On the event
\(\lambda_{1,n}\le c/n\), we have
\(H_n\ge (n\lambda_{1,n})^{-1}\ge c^{-1}>t\). Hence
    $\{\lambda_{1,n}\le c/n\}
    \subseteq
    \{R_{A_n}(X_n,\beta_n)>t\}$.
Applying Lemma~\ref{lem:microscopic-smallest-eigenvalue-lower} with
\(G_n=X_n^\top\), \(N_n=p_n\), \(q_n=n\), and \(\rho=\gamma-1\) gives the claim.
\end{proof}

\begin{remark}[Relation to interpolation-set geometry]
\cite{chen2026abundant} study a complementary geometric
question about interpolation. Their object is the set of unit-norm linear
classifiers that interpolate a fixed labeled dataset. They analyze the
generalization error of a classifier sampled uniformly from this interpolation
set and derive quenched large-deviation principles describing the
exponential-scale volume of interpolators with a prescribed performance.

The results in this section concern a different object. We study noisy linear
regression and prove lower bounds that hold for any exact linear interpolating
algorithm. Thus we do not measure the volume of good interpolators inside an
interpolation set; instead, we identify a risk lower bound that every exact
linear interpolator must inherit from fitting noisy labels. These two viewpoints
are complementary: one concerns the geometry of the interpolation set in
classification, while the other concerns algorithm-independent risk lower
bounds for exact interpolation in noisy regression.
\end{remark}

\section{Numerical experiments}
\label{sec:numerics}

We test the fixed-dimensional polynomial tail in Theorem~\ref{thm:fixed-dimensional-risk-tail} at moderate dimensions. The experiments examine the predicted exponent, the single-eigenvalue mechanism, and its persistence across Gaussian, Laplace, and uniform designs.

We isolate the random spectral statistic that appears in the exact ridgeless risk representation. For fixed integers \(N\ge q\), let \(G\in\mathbb R^{N\times q}\) have independent entries and define $ Y=\operatorname{Tr}\{(G^\top G)^{-1}\}.$ Theorem~\ref{thm:fixed-dimensional-risk-tail} predicts the polynomial tail exponent through
\[
    \mathbb P(Y>x)=x^{-\alpha+o(1)}, \text{ for } x\to\infty, 
    \text{ with }
    \alpha=\frac{N-q+1}{2}.
\]
Thus the codimension \(r=N-q\) determines the rate at which rare large risk values disappear. From an operations perspective, this exponent is the quantity that governs the frequency of extreme estimation errors under repeated deployment, stress testing, or simulation-based calibration.

We use two complementary diagnostics. First, for simulated samples \(Y_1,\ldots,Y_m\), we estimate the empirical logarithmic survival exponent $\frac{\log \widehat{\mathbb P}(Y>x)}{\log x}, \text{ where }
    \widehat{\mathbb P}(Y>x)=\frac1m\sum_{i=1}^m \mathbf 1\{Y_i>x\}.$ Under the prediction above, this quantity should approach \(-\alpha\) in the far right tail. Second, we compute the Hill estimator~\citep{hill1975simple} on the largest order statistics. If \(Y_{(1)}\ge \cdots \ge Y_{(m)}\) denote the decreasing order statistics, then $\widehat \xi_k=\frac1{k}\sum_{j=1}^k
    \log \frac{Y_{(j)}}{Y_{(k+1)}} .$ For a tail satisfying \(\mathbb P(Y>x)\sim Cx^{-\alpha}\), the extreme value tail index is \(\xi=1/\alpha\), so the corresponding exponent estimate is \(-1/\widehat\xi_k\). The two diagnostics use different parts of the simulated tail and therefore provide a useful check against artifacts from threshold selection.

Table~\ref{tab:distribution-figure-table} summarizes the exponent diagnostics. For Gaussian entries, the fitted log-log slopes are \(-0.495\), \(-1.014\), \(-2.105\), and \(-3.290\), while the theoretical values are \(-0.5\), \(-1\), \(-2\), and \(-3\). The agreement is strongest for the heavier tails corresponding to smaller \(r\), where the relevant tail region is well represented with the available Monte Carlo budget. The case \(r=5\) is more demanding because the target tail is substantially steeper; resolving its far tail requires many more extreme observations. The Laplace and uniform rows show the same ordering of exponents and the same qualitative alignment with the hard-edge prediction, indicating that the observed effect is not an artifact of the Gaussian Wishart specification.

To verify the structural mechanism behind the right tail, we also compute the fraction of \(Y\) contributed by the smallest eigenvalue,
\[
    \frac{1/\lambda_{\min}(G^\top G)}
    {\operatorname{Tr}\{(G^\top G)^{-1}\}} .
\]
The proof of the fixed-dimensional tail result predicts that extreme values of \(Y\) arise from a single near-dependence event, equivalently from one eigenvalue approaching the hard edge. Figure~\ref{fig:fixed-dim-aux} is consistent with this mechanism. Conditional on increasingly severe upper tail events, the median contribution of the smallest eigenvalue moves close to one across all three entry distributions. Hence the numerical evidence supports not only the exponent predicted by Theorem~\ref{thm:fixed-dimensional-risk-tail}, but also the microscopic spectral mechanism responsible for extreme ridgeless risk.

\begin{table}[!htbp]
\centering
\caption{
Finite-sample diagnostics for the fixed-dimensional right tail of
\(Y=\operatorname{Tr}\{(G^\top G)^{-1}\}\).
The experiment uses \(q=6\), \(r=N-q\in\{0,1,3,5\}\), and \(2\times 10^5\) independent replications for each pair of entry distribution and value of \(r\).
The rows correspond to Gaussian, Laplace, and uniform entries.
The middle column reports tail exponent diagnostics, including the empirical logarithmic survival exponent and the Hill-based estimate \(-1/\widehat \xi_k\).
The right column reports the empirical log-log survival curves.
Horizontal reference lines mark the theoretical exponents \(-(r+1)/2\).
}
\label{tab:distribution-figure-table}

\setlength{\tabcolsep}{3pt}
\renewcommand{\arraystretch}{1.15}

\newlength{\widefig}
\newlength{\narrowfig}
\setlength{\widefig}{0.52\textwidth}
\setlength{\narrowfig}{0.26\textwidth}

\begin{tabular}{@{}>{\raggedright\arraybackslash}p{0.13\textwidth}cc@{}}
\toprule
Entry distribution
& Tail exponent diagnostics
& Empirical log log survival curves
\\
\midrule

Gaussian
&
\includegraphics[width=\widefig]{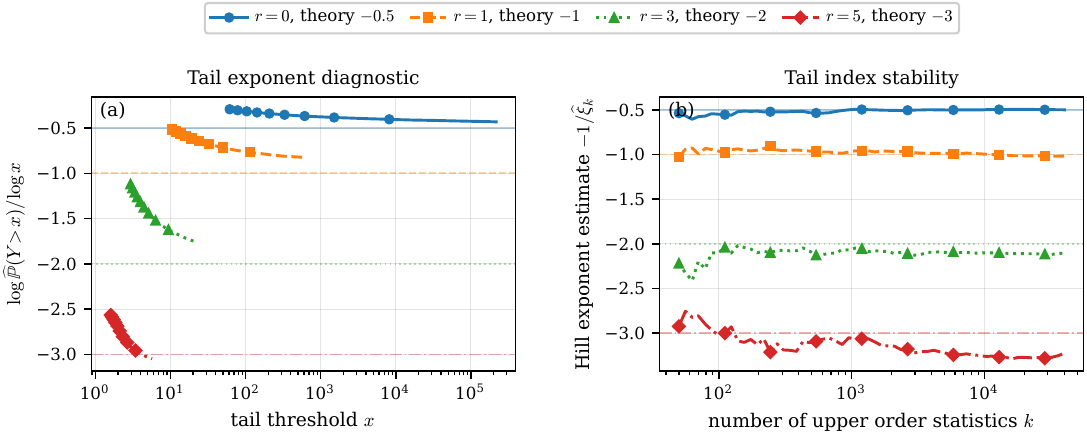}
&
\includegraphics[width=\narrowfig]{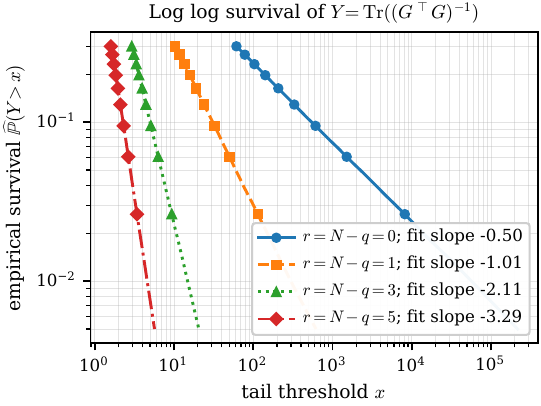}
\\[0.8em]

Laplace
&
\includegraphics[width=\widefig]{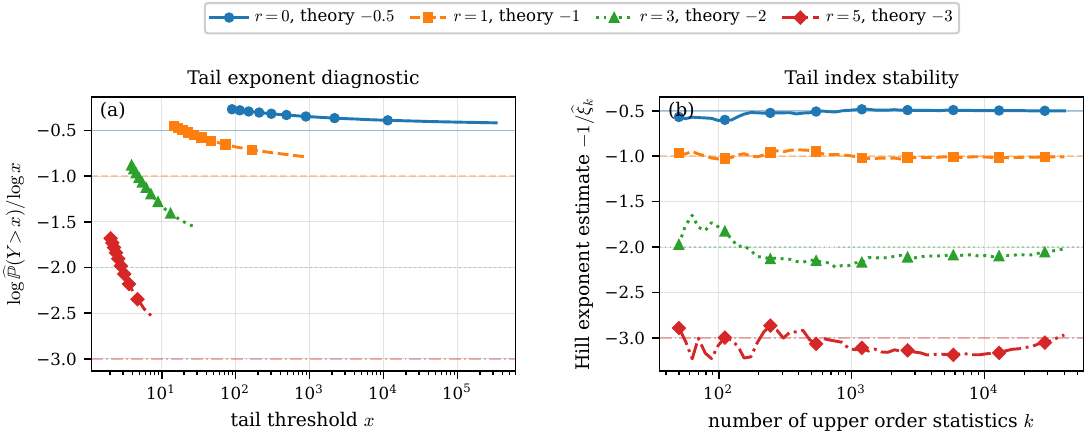}
&
\includegraphics[width=\narrowfig]{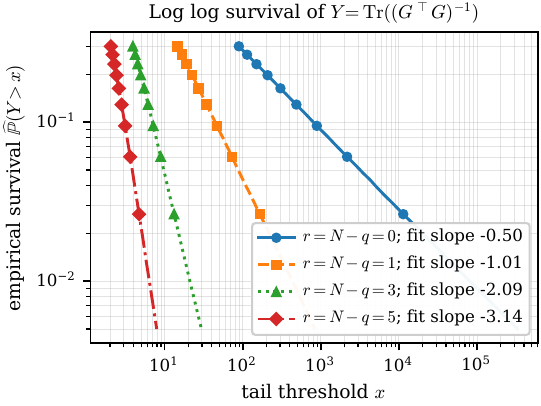}
\\[0.8em]

Uniform
&
\includegraphics[width=\widefig]{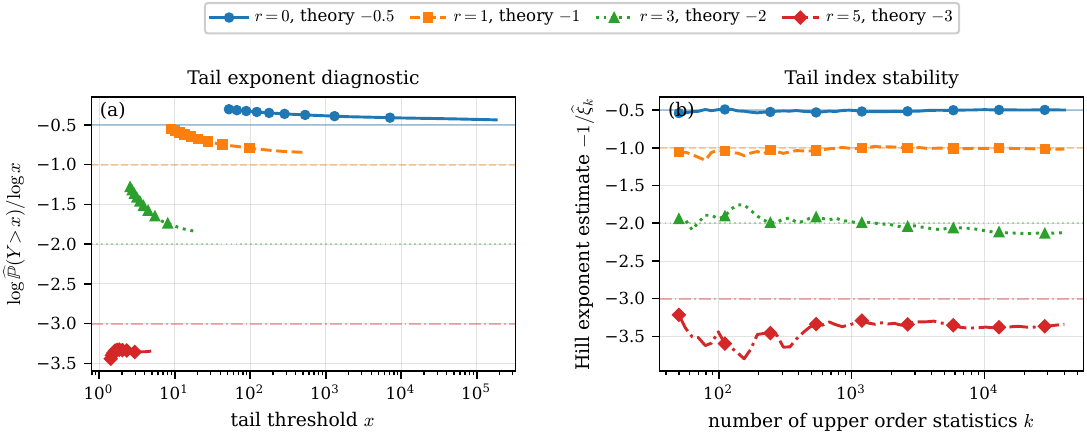}
&
\includegraphics[width=\narrowfig]{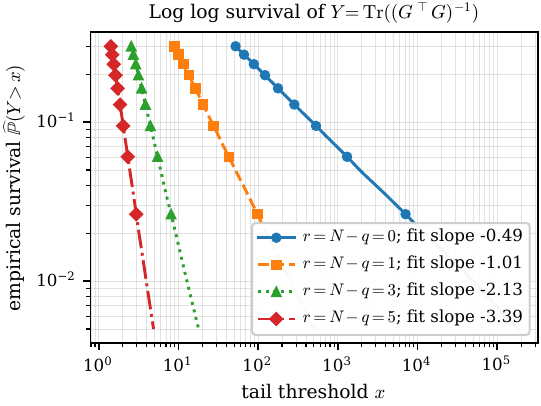}
\\

\bottomrule
\end{tabular}
\end{table}

\begin{figure}[!htbp]
\centering
\begin{minipage}[t]{0.32\textwidth}
\centering
\includegraphics[width=\linewidth]{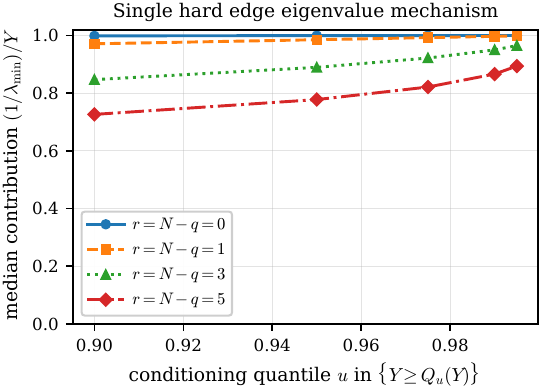}
\smallskip
\centerline{(a) Gaussian entries}
\end{minipage}\begin{minipage}[t]{0.32\textwidth}
\centering
\includegraphics[width=\linewidth]{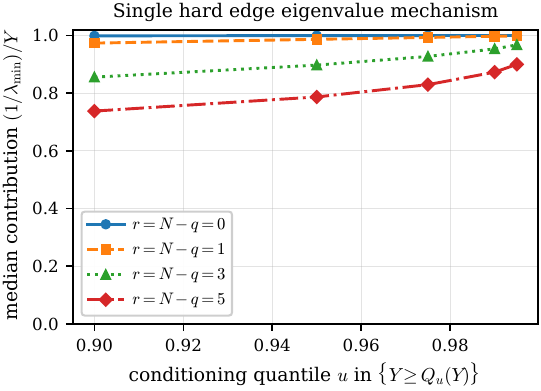}
\smallskip
\centerline{(b) Laplace entries}
\end{minipage}\begin{minipage}[t]{0.32\textwidth}
\centering
\includegraphics[width=\linewidth]{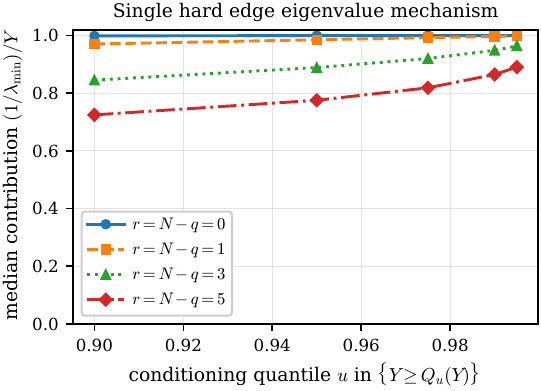}
\smallskip
\centerline{(c) Uniform entries}
\end{minipage}
\caption{
Smallest eigenvalue contribution to the inverse trace, conditional on \(Y\) exceeding an upper empirical quantile.
Each curve reports the median value of
\((1/\lambda_{\min}(G^\top G))/\operatorname{Tr}\{(G^\top G)^{-1}\}\) under the corresponding conditioning event.
Values close to one indicate that extreme risk realizations are dominated by a single hard edge eigenvalue.
}
\label{fig:fixed-dim-aux}
\end{figure}

\section{Proofs of auxiliary estimates}
\label{sec:proofs-auxiliary-estimates}

\subsection{Fixed-dimensional auxiliary estimates}

\begin{proof}[Proof of Proposition~\ref{cor:distance-subspace-upper-small-ball}.]
Take \(E=H^\perp\), so \(\dim(E)=d\) and
\(\operatorname{dist}(Z,H)=\|P_{H^\perp}Z\|_2\). The bounded-density
projection estimate of \citet[Theorem~2.5]{nguyen2018random} gives, for
orthogonal projections onto \(d\)-dimensional subspaces,
\[
    \mathbb P(\|P_E Z-z\|_2\le a)
    \le
    \left(
        \frac{C K_{\rm up}a}{\sqrt d}
    \right)^d ,
\]
for every \(z\in E\) and \(a\ge0\) (see also Theorem~1.1 and Corollary~1.4
of~\cite{rudelson2015small}). Applying this estimate with \(z=0\) and
\(a=t\) gives the stated bound.
\end{proof}

\begin{proof}[Proof of Proposition~\ref{cor:distance-subspace-lower-small-ball}.]
Take \(E=H^\perp\). Then \(\dim(E)=d\) and
\(\operatorname{dist}(X,H)=\|P_E X\|_2\). Let \(p_\xi\) denote the common
density of the entries of \(X\). By the nondegeneracy-near-zero condition in
Assumption~\ref{ass:general-entries}, \(p_\xi(x)\ge k_{\rm low}\) for
\(|x|\le u_0\).

Write \(x=y+z\), where \(y\in E\) and \(z\in E^\perp\). If
\(\|y\|_2\le a\le u_0/2\), then \(\|y\|_\infty\le u_0/2\). If also
\(z\in E^\perp\cap[-u_0/2,u_0/2]^N\), then \(\|z\|_\infty\le u_0/2\), hence
\(|(y+z)_i|\le u_0\) for \(1\le i\le N\), and on this set
\(\prod_{i=1}^N p_\xi((y+z)_i)\ge k_{\rm low}^N\).
Using the orthogonal decomposition \(\mathbb R^N=E\oplus E^\perp\),
\[
\begin{aligned}
    \mathbb P(\|P_E X\|_2\le a)
    &\ge
    k_{\rm low}^N
    \operatorname{Vol}_d(B_2^d(a))
    \operatorname{Vol}_{N-d}
    \bigl(E^\perp\cap[-u_0/2,u_0/2]^N\bigr).
\end{aligned}
\]
By Vaaler's theorem for central sections of the cube
\citep{rogers1958packing,vaaler1979geometric},
\[
    \operatorname{Vol}_{N-d}
    \bigl(E^\perp\cap[-u_0/2,u_0/2]^N\bigr)
    \ge u_0^{N-d}.
\]
Thus
\[
    \mathbb P(\|P_E X\|_2\le a)
    \ge
    k_{\rm low}^N u_0^{N-d}\operatorname{Vol}_d(B_2^d(a)).
\]
For fixed \(N,d\), this is at least \(c_{\rm low}a^d\) for
\(0<a\le u_0/2\), after absorbing the positive constants into
\(c_{\rm low}\).
\end{proof}

\begin{proof}[Proof of Lemma~\ref{lem:negative-second-moment-fixed}.]
It suffices to prove that the \(i\)-th diagonal entry of \(W^{-1}\) is
\(D_i^{-2}\). After permuting columns, take \(i=q\) and write
\(G=[G_{-q},g_q]\). Then
\[
    W=
    \begin{pmatrix}
        G_{-q}^\top G_{-q} & G_{-q}^\top g_q\\
        g_q^\top G_{-q} & g_q^\top g_q
    \end{pmatrix}.
\]
By the block inverse formula, the last diagonal entry of \(W^{-1}\) is the
inverse of the Schur complement
\[
    g_q^\top g_q
    -
    g_q^\top G_{-q}(G_{-q}^\top G_{-q})^{-1}G_{-q}^\top g_q
    =
    \|(I-P_{H_q})g_q\|_2^2
    =
    D_q^2.
\]
Thus \((W^{-1})_{ii}=D_i^{-2}\) for every \(i\). Summing the diagonal entries
gives the identity.
\end{proof}

\subsection{Right-tail auxiliary estimates}

Throughout this subsection, for a matrix \(B\in\mathbb R^{N\times q}\) write
\(A_B:=n^{-1}B^\top B\), let
\(\lambda_1(B)\le\cdots\le\lambda_q(B)\) denote the eigenvalues of \(A_B\),
and set \(F_{n,h}(B):=\omega_n\sum_i h(\lambda_i(B))\).

\begin{lemma}[Local Lipschitz bound for covariance spectral statistics]
\label{lem:local-lipschitz-spectral-statistic}
Let \(G,H\in\mathbb R^{N\times q}\) and let
\(h:\mathbb R_+\to\mathbb R\) be Lipschitz. Then
\[
    |F_{n,h}(G)-F_{n,h}(H)|
    \le
    \omega_n \|h\|_{\mathrm{Lip}}
    \frac{\|G\|_F+\|H\|_F}{n}
    \|G-H\|_F .
\]
In particular, if \(\omega_n\le C_\omega/n\) and
\(\|G\|_F,\|H\|_F\le Rn\), then
\[
    |F_{n,h}(G)-F_{n,h}(H)|
    \le
    \frac{2C_\omega R\|h\|_{\mathrm{Lip}}}{n}\,
    \|G-H\|_F .
\]
\end{lemma}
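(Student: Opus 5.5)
The plan is to reduce the statement to a perturbation bound for the eigenvalues, followed by Cauchy--Schwarz. Write \(A_G=n^{-1}G^\top G\) and \(A_H=n^{-1}H^\top H\), both symmetric \(q\times q\) matrices. Since the \(\lambda_i\) are ordered increasingly, the Lipschitz property of \(h\) gives
\[
    |F_{n,h}(G)-F_{n,h}(H)|
    \le
    \omega_n\|h\|_{\mathrm{Lip}}\sum_{i=1}^q|\lambda_i(G)-\lambda_i(H)|.
\]
It therefore suffices to show that
\(\sum_i|\lambda_i(G)-\lambda_i(H)|\le n^{-1}(\|G\|_F+\|H\|_F)\|G-H\|_F\).

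The key step avoids the \(\sqrt q\) loss that would come from using Hoffman--Wielandt and then Cauchy--Schwarz over \(q\) terms. I will instead use the Lidskii (Ky Fan) inequality for the trace norm: for symmetric matrices the ordered eigenvalue difference vector is majorized by the eigenvalues of the difference, so
\[
    \sum_i|\lambda_i(A_G)-\lambda_i(A_H)|\le\|A_G-A_H\|_{S_1},
\]
where \(\|\cdot\|_{S_1}\) is the nuclear norm. Next I decompose
\[
    G^\top G-H^\top H=G^\top(G-H)+(G-H)^\top H.
\]
Hölder's inequality for Schatten norms, \(\|XY\|_{S_1}\le\|X\|_{S_2}\|Y\|_{S_2}=\|X\|_F\|Y\|_F\), then gives
\[
    \|G^\top G-H^\top H\|_{S_1}\le(\|G\|_F+\|H\|_F)\|G-H\|_F.
\]
Dividing by \(n\) yields the first claimed inequality.

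For the second statement, substitute \(\omega_n\le C_\omega/n\) and \(\|G\|_F+\|H\|_F\le 2Rn\). This gives the bound \(2C_\omega R\|h\|_{\mathrm{Lip}}\|G-H\|_F/n\).

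The only nontrivial input is the nuclear-norm Lidskii inequality. To justify it, I would note that the map sending a symmetric matrix to its ordered spectrum satisfies \(\lambda^\downarrow(A)-\lambda^\downarrow(B)\prec\lambda(A-B)\). This majorization implies the inequality for every symmetric gauge function, and in particular for the \(\ell^1\) norm.

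The main obstacle I expect is being careful about the pairing of ordered eigenvalues on both sides. Lidskii is stated for decreasingly ordered spectra, while the paper orders increasingly. This is harmless: reversing both orderings leaves the pairing unchanged.
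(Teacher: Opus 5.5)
Your proposal is correct and follows the paper's proof essentially step for step. Both arguments use the Lipschitz reduction to $\sum_i|\lambda_i(G)-\lambda_i(H)|$, then the Lidskii--Mirsky--Wielandt bound by the nuclear norm of $A_G-A_H$, then the splitting $G^\top(G-H)+(G-H)^\top H$ together with $\|UV\|_*\le\|U\|_F\|V\|_F$, and finally multiplication by $\omega_n$.
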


\begin{proof}[Proof of Lemma~\ref{lem:local-lipschitz-spectral-statistic}.]
With the notation of the lemma, \(A_G=n^{-1}G^\top G\) and
\(A_H=n^{-1}H^\top H\).
By the Lidskii--Mirsky--Wielandt eigenvalue perturbation inequality
\citep{mirsky1957inequalities,li1999lidskii},
\[
\begin{aligned}
    \left|
        \sum_{i=1}^q h(\lambda_i(G))
        -
        \sum_{i=1}^q h(\lambda_i(H))
    \right|
    &\le
    \|h\|_{\mathrm{Lip}}
    \sum_{i=1}^q |\lambda_i(G)-\lambda_i(H)|  \\
    &\le
    \|h\|_{\mathrm{Lip}}
    \|A_G-A_H\|_* .
\end{aligned}
\]
Moreover,
\(A_G-A_H=n^{-1}G^\top(G-H)+n^{-1}(G-H)^\top H\). Using
\(\|UV\|_*\le \|U\|_F\|V\|_F\), we get
\(\|A_G-A_H\|_*\le n^{-1}(\|G\|_F+\|H\|_F)\|G-H\|_F\).
Multiplying by \(\omega_n\) proves the first claim. If
\(\omega_n\le C_\omega/n\) and \(G,H\in B_R\), then
\(|F_{n,h}(G)-F_{n,h}(H)|\le
2C_\omega R\|h\|_{\mathrm{Lip}}\|G-H\|_F/n\), which gives the local Lipschitz
bound on \(B_R\).
\end{proof}

\begin{lemma}[Rectangular lower-edge tail]
\label{lem:rectangular-lower-edge-tail}
Assume the normalization condition B.1 in Assumption~\ref{ass:general-entries},
and assume in addition that the entries have subgaussian tails.
There exist constants \(c_0>0\) and
\(c>0\), with \(c_0<\ell_{\kappa,\rho}/4\), such that
\[
    \mathbb P(\lambda_{1,n}<2c_0)\le e^{-cn}
\]
for all sufficiently large \(n\).
\end{lemma}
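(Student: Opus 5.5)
The statement to prove is the rectangular lower-edge tail bound: there exist $c_0\in(0,\ell_{\kappa,\rho}/4)$ and $c>0$ with $\mathbb P(\lambda_{1,n}<2c_0)\le e^{-cn}$ for large $n$. The plan is a standard net argument for the smallest singular value of a tall rectangular matrix. Since $\rho>0$, $G_n$ is genuinely tall: $N_n/q_n\to(\kappa+\rho)/\kappa>1$. The event $\lambda_{1,n}<2c_0$ is the event $s_{\min}(G_n)<\sqrt{2c_0 n}$, so it suffices to show $\inf_{x\in S^{q_n-1}}\|G_nx\|_2\ge \sqrt{2c_0n}$ with probability at least $1-e^{-cn}$, for some small constant $c_0>0$. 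Shrinking $c_0$ afterwards guarantees $c_0<\ell_{\kappa,\rho}/4$, so we never need the sharp edge $\ell_{\kappa,\rho}$, only a positive constant.

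\textbf{Step 1 (operator norm).} Subgaussian entries give $\|G_n\|\le K\sqrt n$ with probability at least $1-e^{-n}$, for a constant $K$. This follows from a standard $1/4$-net argument combined with Bernstein's inequality for $\langle G_n x,y\rangle$.

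\textbf{Step 2 (fixed vector).} For fixed $x\in S^{q_n-1}$, the coordinates $(G_nx)_j$ are i.i.d., mean zero, variance one and subgaussian. A one-sided Bernstein/Paley--Zygmund estimate gives $\mathbb P(|(G_nx)_j|\ge 1/2)\ge p_0>0$ uniformly in $x$; the bounded-density condition B.2 would make this trivial, but subgaussianity together with unit variance suffices. A Chernoff bound on the number of large coordinates then yields
\[
\mathbb P\bigl(\|G_nx\|_2^2\le \tau N_n\bigr)\le e^{-c_1N_n}
\]
for a small constant $\tau$. To compete with the net cardinality below we need the sharper form $\mathbb P(\|G_nx\|_2\le \varepsilon\sqrt{N_n})\le (C\varepsilon)^{c N_n}$. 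I would obtain this by tensorization (Rudelson--Vershynin, Lemma 2.2), using the single-coordinate small-ball bound $\mathbb P(|(G_nx)_j|\le\varepsilon)\le C\varepsilon$. That single-coordinate bound holds because $(G_nx)_j$ has density at most $CK_{\rm up}$ by B.2: a unit-norm combination of independent variables with density at most $K_{\rm up}$ has bounded density (Rogozin/Bobkov--Chistyakov).

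\textbf{Step 3 (net and union bound); this is the main obstacle.} The exponents must be balanced. Take an $\varepsilon'$-net $\mathcal N$ of $S^{q_n-1}$ with $|\mathcal N|\le(3/\varepsilon')^{q_n}$, and set $\varepsilon'=\varepsilon/(2K)$ so that on $\{\|G_n\|\le K\sqrt n\}$ any bad $x$ has a net neighbor with $\|G_ny\|\le 2\varepsilon\sqrt n$. The union bound gives
\[
(6K/\varepsilon)^{q_n}(C\varepsilon)^{cN_n}.
\]
Tensorization in fact gives an exponent arbitrarily close to $N_n$ as $\varepsilon\to0$ only in the Gaussian case; for general entries we get $(C\varepsilon)^{N_n}$ directly from B.2, via the density bound on $G_nx\in\mathbb R^{N_n}$ restricted to a ball. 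Using that exponent, the bound is $\exp\{q_n\log(6K/\varepsilon)+N_n\log(C\varepsilon)\}$. This is at most $e^{-cn}$ once $\varepsilon$ is small, because the coefficient of $\log(1/\varepsilon)$ is $N_n-q_n=r_n\asymp\rho n>0$. This is exactly where $\rho>0$ is used. The conclusion is $s_{\min}(G_n)\ge\varepsilon\sqrt n$, so one can take $2c_0=\varepsilon^2$, shrunk further if needed to ensure $c_0<\ell_{\kappa,\rho}/4$.
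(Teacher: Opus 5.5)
Your net argument uses a hypothesis the lemma does not make. The lemma assumes only B.1 and subgaussian tails. The decisive input in your Steps~2--3 is the uniform fixed-vector bound $\mathbb P(\|G_ny\|_2\le 2\varepsilon\sqrt n)\le (C\varepsilon)^{N_n}$, and you derive it from the bounded-density condition B.2. Without a density it is false. For Rademacher entries (centered, unit variance, subgaussian) take $y=(e_1-e_2)/\sqrt2$. The coordinates $(g_{j1}-g_{j2})/\sqrt2$ vanish independently with probability $1/2$, so $\mathbb P(G_ny=0)=2^{-N_n}$, and the small-ball probability does not decay in $\varepsilon$. The only bound that survives is the Paley--Zygmund/Chernoff bound $e^{-c_1N_n}$. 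That bound cannot absorb the net cardinality $(6K/\varepsilon)^{q_n}$ unless $N_n/q_n$ exceeds a large constant. Here, however, $N_n/q_n\to 1+\rho/\kappa$, which can be arbitrarily close to $1$ (for instance when $\gamma$ is near $1$ in the regression specialization). Handling that regime without a density is where a single net is not enough. One has to split the sphere into compressible vectors, where the crude $e^{-c_1N_n}$ bound beats a much smaller net, and incompressible vectors, where a Berry--Esseen/Littlewood--Offord-type anti-concentration bound replaces the density. The paper does not redo this. It quotes Rudelson--Vershynin (2009, Theorem~1.1),
$\mathbb P\bigl(s_{\min}(G_n)\le\varepsilon(\sqrt{N_n}-\sqrt{q_n-1})\bigr)\le(C\varepsilon)^{N_n-q_n+1}+e^{-cN_n}$, which holds for centered unit-variance subgaussian entries. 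It then finishes in a few lines from $\sqrt{N_n}-\sqrt{q_n-1}\ge d_{\kappa,\rho}\sqrt n$ and a choice of $\varepsilon_0$ with $C\varepsilon_0<1$.

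If you add B.2 to the hypotheses, your argument is correct and self-contained. On $\{\|G_n\|\le K\sqrt n\}$ the union bound is $\exp\{q_n\log(6K/\varepsilon)+N_n\log(C\varepsilon)\}$. The coefficient of $\log(1/\varepsilon)$ is $-(N_n-q_n)=-r_n$ with $r_n\asymp\rho n$, which makes the bound $e^{-cn}$ for small $\varepsilon$. This is essentially the $k=1$ case of the paper's proof of Proposition~\ref{prop:microscopic-hard-edge-counting}, run with a fixed large constant in place of $L_n$ and a constant threshold $t$. Your aside that tensorization gives an exponent near $N_n$ only in the Gaussian case is muddled. Under B.2 each coordinate of $G_ny$ has density at most $CK_{\rm up}$, so tensorization, or Proposition~\ref{cor:distance-subspace-upper-small-ball} applied to $\operatorname{vec}(G_n)$, gives $(C\varepsilon)^{N_n}$ directly. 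That version suffices for Theorem~\ref{thm:ridgeless-right-tail}, where B.2 is in force. It is still strictly weaker than the lemma as stated, and it does not cover Proposition~\ref{prop:nonmicroscopic-inverse-trace}, which invokes the lemma under B.1 and B.4 alone.
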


\begin{proof}[Proof of Lemma~\ref{lem:rectangular-lower-edge-tail}.]
We use the standard rectangular least singular value estimate of Theorem 1.1 of~\cite{rudelson2009smallest}:
there exist constants \(C,c>0\), depending only on the subgaussian norm, such
that for every \(\varepsilon>0\),
\[
    \mathbb P\left(
        s_{\min}(G_n)
        \le
        \varepsilon\bigl(\sqrt{N_n}-\sqrt{q_n-1}\bigr)
    \right)
    \le
    (C\varepsilon)^{N_n-q_n+1}
    +
    e^{-cN_n}.
\]
Since \(N_n,q_n\asymp n\) and \(N_n-q_n\asymp n\), there exists
\(d_{\kappa,\rho}>0\) such that, for all large \(n\),
\[
    \sqrt{N_n}-\sqrt{q_n-1}
    =
    \frac{N_n-q_n+1}{\sqrt{N_n}+\sqrt{q_n-1}}
    \ge
    d_{\kappa,\rho}\sqrt n .
\]
Choose \(\varepsilon_0>0\) small enough that \(C\varepsilon_0<1\). Then choose
\(c_0>0\) so small that
\(c_0<\ell_{\kappa,\rho}/4\) and
\(\sqrt{2c_0}\le \varepsilon_0 d_{\kappa,\rho}\). Since
\(\lambda_{1,n}=s_{\min}(G_n)^2/n\),
the event \(\{\lambda_{1,n}<2c_0\}\) implies
\[
    s_{\min}(G_n)
    <
    \sqrt{2c_0 n}
    \le
    \varepsilon_0
    \bigl(\sqrt{N_n}-\sqrt{q_n-1}\bigr).
\]
Therefore
\[
    \mathbb P(\lambda_{1,n}<2c_0)
    \le
    (C\varepsilon_0)^{N_n-q_n+1}
    +
    e^{-cN_n}
    \le
    e^{-c'n}
\]
after adjusting constants.
\end{proof}

\begin{proof}[Proof of Lemma~\ref{lem:weighted-hard-edge-pigeonhole}.]
If \(K_\eta=0\), then \(S_\eta=0\), so the event \(\{S_\eta>u\}\) is empty.
Suppose \(K_\eta\ge1\). If, for every \(1\le k\le K_\eta\),
\[
    \lambda_k\ge \frac{M_\theta k^\theta}{nu},
\]
then
\[
\begin{aligned}
    S_\eta
    &=
    \frac1n\sum_{k=1}^{K_\eta}\frac1{\lambda_k}
    \le
    \frac1n\sum_{k=1}^{K_\eta}
    \frac{nu}{M_\theta k^\theta}
    =
    u\,
    \frac{\sum_{k=1}^{K_\eta}k^{-\theta}}{M_\theta}
    \le
    u.
\end{aligned}
\]
Therefore, on \(\{S_\eta>u\}\), there exists \(1\le k\le K_\eta\) such that
\(\lambda_k<M_\theta k^\theta/(nu)\). Since \(k\le K_\eta\), we also have
\(\lambda_k<\eta\). This proves the inclusion.
\end{proof}

\begin{lemma}[Grassmannian covering]
\label{lem:grassmannian-covering}
Let \(\mathrm{Gr}(k,q)\) be the Grassmannian of \(k\)-dimensional subspaces of
\(\mathbb R^q\) with metric \(d(E,F):=\|P_E-P_F\|_{\rm op}\). For every
\(0<\varepsilon<1\), there exists an \(\varepsilon\)-net
\(\mathcal N_\varepsilon\subseteq \mathrm{Gr}(k,q)\) such that
\[
    |\mathcal N_\varepsilon|
    \le
    \left(\frac{C}{\varepsilon}\right)^{k(q-k)}.
\]
\end{lemma}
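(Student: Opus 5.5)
The plan is the standard volumetric argument, transported to the Grassmannian through the operator-norm metric on orthogonal projections. Identify each \(E\in\mathrm{Gr}(k,q)\) with its projector \(P_E\), so that \(d\) is just the metric induced by \(\|\cdot\|_{\rm op}\) on the set \(\{P_E\}\). Since a covering of a subset by balls of radius \(\varepsilon/2\) centered anywhere yields an \(\varepsilon\)-net with centers inside the subset, it suffices to cover the projector set efficiently.

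We use the local chart around a fixed \(E_0\in\mathrm{Gr}(k,q)\). Every \(E\) with \(\|P_E-P_{E_0}\|_{\rm op}<1\) is the graph of a unique linear map \(T:E_0\to E_0^\perp\), that is,
\[
    E=\{x+Tx:\ x\in E_0\},
\]
and \(T\) ranges over the \(k(q-k)\)-dimensional space \(\mathrm{Hom}(E_0,E_0^\perp)\). The map \(T\mapsto P_{\mathrm{graph}(T)}\) is Lipschitz with an absolute constant on \(\{\|T\|_{\rm op}\le 1\}\). This follows from the explicit formula
\[
    P=\begin{pmatrix} I \\ T\end{pmatrix}(I+T^\top T)^{-1}\begin{pmatrix} I & T^\top\end{pmatrix},
\]
whose derivative in \(T\) is bounded in operator norm uniformly for \(\|T\|_{\rm op}\le 1\). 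Covering the operator-norm unit ball of \(\mathrm{Hom}(E_0,E_0^\perp)\cong\mathbb R^{k(q-k)}\) by standard volumetric comparison then uses at most \((3C'/\varepsilon)^{k(q-k)}\) operator-norm balls of radius \(\varepsilon/C'\). This covers a neighborhood \(\{E: d(E,E_0)\le c_0\}\) for an absolute \(c_0\), because \(\|T\|_{\rm op}=\tan\theta_{\max}\) is controlled by \(d(E,E_0)=\sin\theta_{\max}\).

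The main obstacle is the globalization step: the number of charts needed to cover \(\mathrm{Gr}(k,q)\) by \(c_0\)-neighborhoods must itself be at most \(C^{k(q-k)}\). A naive count through bases (Stiefel nets) would give \(C^{kq}\), which is too large when \(k\) is close to \(q\). I would handle this by a maximal \(c_0\)-separated set argument with respect to the invariant (Haar) probability measure \(\sigma\) on \(\mathrm{Gr}(k,q)\). Let \(B(E,r)\) denote the \(d\)-ball of radius \(r\) around \(E\). Through the graph chart, \(\sigma(B(E,r))\ge (cr)^{k(q-k)}\) for \(r\le c_0\), since the chart pushes forward Haar measure to a density comparable, up to \(C^{k(q-k)}\) factors, to Lebesgue measure on small operator-norm balls. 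Disjointness of \(c_0/2\)-balls then bounds the cardinality of the separated set by \((C/c_0)^{k(q-k)}\).

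Alternatively, and more cleanly, apply this measure argument directly at scale \(\varepsilon\): a maximal \(\varepsilon\)-separated set \(\mathcal N_\varepsilon\) is automatically an \(\varepsilon\)-net. The balls \(B(E,\varepsilon/2)\), \(E\in\mathcal N_\varepsilon\), are disjoint, so
\[
    |\mathcal N_\varepsilon|\le \frac{1}{\min_E\sigma(B(E,\varepsilon/2))}\le \left(\frac{C}{\varepsilon}\right)^{k(q-k)}.
\]
The whole proof therefore reduces to the small-ball lower bound \(\sigma(B(E,r))\ge (cr)^{k(q-k)}\). This is Szarek's estimate on Grassmannians, which I would cite (Szarek, 1982, ``Nets of Grassmann manifold and orthogonal group''), or otherwise verify via the chart Jacobian \(\det(I+T^\top T)^{-q/2}\ge 2^{-qk/2}\). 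One caution on that route: the Jacobian gives a factor \(C^{qk}\) rather than \(C^{k(q-k)}\), which is harmless when \(k\le q/2\). For \(k>q/2\), pass to orthogonal complements using \(P_{E^\perp}=I-P_E\); this is an isometry of \(\mathrm{Gr}(k,q)\) onto \(\mathrm{Gr}(q-k,q)\), so the case \(k>q/2\) reduces to the case \(q-k<q/2\).
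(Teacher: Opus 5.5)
Your proposal is correct and rests on the same classical input as the paper, which proves the lemma only by citing the Szarek--Pajor metric-entropy bound. Your maximal-separated-set reduction to the Haar small-ball estimate \(\sigma(B(E,r))\ge(cr)^{k(q-k)}\), with Szarek's paper cited for that estimate, is the standard argument behind that citation. One caution on the self-contained chart-Jacobian route: \(\det(I+T^\top T)^{-q/2}\ge 2^{-qk/2}\) bounds only the unnormalized density, so you would still need the normalizing constant \(c_{k,q}=\bigl(\int_{\mathbb R^{(q-k)\times k}}\det(I+T^\top T)^{-q/2}\,dT\bigr)^{-1}\) to satisfy \(c_{k,q}\operatorname{Vol}(B_{\rm op}(1))\ge c^{k(q-k)}\) (equivalently, the operator-norm unit ball of the chart carries Haar mass at least \(c^{k(q-k)}\)), and that is precisely the nontrivial part of Szarek's estimate.
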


\begin{proof}[Proof of Lemma~\ref{lem:grassmannian-covering}.]
This is the standard metric entropy bound for the Grassmannian in the
projection metric~\citep{pajor1998metric}.
\end{proof}

\begin{lemma}[Alignment of nearby subspaces]
\label{lem:subspace-alignment}
Let \(E,F\in\mathrm{Gr}(k,q)\), and suppose
\(\|P_E-P_F\|_{\rm op}\le\varepsilon\). For every orthonormal basis
\(U_F\in\mathbb R^{q\times k}\) of \(F\), there is an orthonormal basis
\(U_E\in\mathbb R^{q\times k}\) of \(E\) such that
\[
    \|U_E-U_F\|_F\le C\sqrt{k}\,\varepsilon .
\]
\end{lemma}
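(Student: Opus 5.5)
The plan is to use principal angles, i.e.\ the singular value decomposition of \(U_F^\top U\) where \(U\in\mathbb R^{q\times k}\) is an arbitrary orthonormal basis of \(E\).

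First, I would record the standard identities for the \(k\times k\) matrix \(M:=U_F^\top U\). Its singular values are \(\cos\theta_1,\ldots,\cos\theta_k\) with \(\theta_i\in[0,\pi/2]\), the principal angles between \(F\) and \(E\). Moreover, \(\|P_E-P_F\|_{\rm op}=\sin\theta_{\max}\). To check the latter, note that \(\|(I-P_F)U\|_{\rm op}^2=\|I-M^\top M\|_{\rm op}=\max_i\sin^2\theta_i\), and that \(\|P_E-P_F\|_{\rm op}\) equals the larger of \(\|(I-P_F)P_E\|_{\rm op}\) and \(\|(I-P_E)P_F\|_{\rm op}\). Since \(\dim E=\dim F\), these two quantities coincide. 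Consequently every \(\sin\theta_i\le\varepsilon\).

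Second, I would construct the aligned basis. Write the SVD \(M=V\Sigma W^\top\), with \(V,W\in O(k)\) and \(\Sigma=\operatorname{diag}(\cos\theta_i)\), and set \(U_E:=UWV^\top\). This matrix is an orthonormal basis of \(E\) because \(WV^\top\in O(k)\). It is the orthogonal Procrustes choice, and it satisfies
\[
U_F^\top U_E=V\Sigma V^\top,
\]
which is symmetric positive semidefinite.

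Third, I would compute the error directly:
\[
\|U_E-U_F\|_F^2=2k-2\operatorname{Tr}(U_F^\top U_E)=2\sum_{i=1}^k(1-\cos\theta_i).
\]
Using \(1-\cos\theta=\sin^2\theta/(1+\cos\theta)\le\sin^2\theta\) for \(\theta\in[0,\pi/2]\), this gives
\[
\|U_E-U_F\|_F^2\le 2\sum_{i=1}^k\sin^2\theta_i\le 2k\varepsilon^2 .
\]
Hence the lemma holds with \(C=\sqrt2\).

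The only step needing care is the first: identifying \(\|P_E-P_F\|_{\rm op}\) with \(\sin\theta_{\max}\). If I wanted to avoid the equal-dimension symmetry argument, I would instead use only the one-sided bound \(\|(I-P_F)U\|_{\rm op}=\|(I-P_F)P_E\|_{\rm op}=\|(P_E-P_F)P_E\|_{\rm op}\le\|P_E-P_F\|_{\rm op}\le\varepsilon\). That bound already gives \(\sin\theta_i\le\varepsilon\) for every \(i\), which is all the computation above requires.
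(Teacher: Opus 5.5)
Your proposal is correct and follows essentially the same route as the paper. Both arguments use principal angles, an aligned pair of bases, and the bound \(1-\cos\theta_i\le\sin^2\theta_i\le\varepsilon^2\), which gives \(C=\sqrt2\). Your Procrustes choice \(U_E=UWV^\top\) is exactly the paper's principal-vector basis transported to the given \(U_F\), and your one-sided derivation of \(\sin\theta_i\le\varepsilon\) is a self-contained substitute for the identity \(\|P_E-P_F\|_{\rm op}=\max_i\sin\theta_i\) that the paper quotes.
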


\begin{proof}[Proof of Lemma~\ref{lem:subspace-alignment}.]
The assertion is invariant under simultaneous right multiplication of
\(U_E\) and \(U_F\) by an orthogonal \(k\times k\) matrix, since this does not
change the corresponding subspaces and preserves the Frobenius norm. Hence, we may assume without loss of
generality that \(U_F\) is a principal basis of \(F\).
Let \(\theta_1,\ldots,\theta_k\in[0,\pi/2]\) be the principal angles between
\(E\) and \(F\). Since
\(\|P_E-P_F\|_{\rm op}=\max_{1\le i\le k}\sin\theta_i\), we have
\(\sin\theta_i\le\varepsilon\) for every \(i\). Choose corresponding
principal vector bases \((e_i)_{i=1}^k\) and \((f_i)_{i=1}^k\) of \(E\) and
\(F\). Taking \(U_E=(e_1,\ldots,e_k)\) and \(U_F=(f_1,\ldots,f_k)\), we get
\[
\begin{aligned}
    \|U_E-U_F\|_F^2
    &=
    \sum_{i=1}^k \|e_i-f_i\|_2^2 =
    2\sum_{i=1}^k (1-\cos\theta_i)\le
    2\sum_{i=1}^k \sin^2\theta_i
    \le
    2k\varepsilon^2.
\end{aligned}
\]
This proves the claim, after changing the universal constant.
\end{proof}

\begin{proof}[Proof of Proposition~\ref{prop:microscopic-hard-edge-counting}.]
Whenever this proof requires \(n\) to be large, the lower bound on \(n\) is
chosen uniformly over \(1\le k\le q_n\) and \(0<t\le1\).
For this proof, write \(q:=q_n\), \(N:=N_n\), \(r:=r_n=N-q\), and
\(A_n:=n^{-1}G_n^\top G_n\). Let
\(0<s_1(G_n)\le \cdots \le s_q(G_n)\) be the singular values of \(G_n\). Since
the eigenvalues of \(A_n\) are \(s_i(G_n)^2/n\), the event
\(\{\lambda_{k,n}\le t\}\) is the same as \(\{s_k(G_n)^2\le nt\}\).
By the min--max characterization of singular values,
\[
    s_k(G)
    =
    \inf_{\substack{E\subset\mathbb R^q\\ \dim(E)=k}}
    \|G|_E\|_{\mathrm{op}}.
\]
Hence, on the event \(\{\lambda_k\le t\}\), there exists a
\(k\)-dimensional subspace \(E\subset\mathbb R^q\) such that
\(\|G|_E\|_{\mathrm{op}}\le s=\sqrt{nt}\).
If \(U_E\in\mathbb R^{q\times k}\) is an orthonormal basis of \(E\), then
\(\|GU_E\|_{\mathrm{op}}\le\sqrt{nt}\), and therefore
\(\|GU_E\|_F\le\sqrt{k}\,\|GU_E\|_{\mathrm{op}}\le\sqrt{knt}\).

We first restrict to a high-probability operator-norm event.
By the operator-norm bound for
rectangular subgaussian matrices~\cite[Theorem~4.4.3]{vershynin2020high}, there exists a constant
\(C_0<\infty\), depending only on the subgaussian norm of the entries,
such that for every \(u>0\),
\[
    \mathbb P\left(
        \|G\|_{\mathrm{op}}>
        C_0(\sqrt N+\sqrt q+u)
    \right)
    \le
    2e^{-u^2}.
\]
By the aspect-ratio assumption, there exists \(C_{\rm asp}<\infty\) such that,
for all sufficiently large \(n\),
\[
    \sqrt N+\sqrt q\le C_{\rm asp}\sqrt n,
    \qquad
    N\le C_{\rm asp}n,
    \qquad
    q\le C_{\rm asp}n.
\]
Since \(L_n\to\infty\), for all sufficiently large \(n\),
\(L_n\ge2C_0C_{\rm asp}\). Set \(u_n:=L_n\sqrt n/(2C_0)\).
Then
\[
    C_0(\sqrt N+\sqrt q+u_n)
    \le
    C_0C_{\rm asp}\sqrt n+\frac12L_n\sqrt n
    \le
    L_n\sqrt n .
\]
Thus the good event \(\mathcal E_n(L_n):=\{\|G_n\|_{\rm op}\le L_n\sqrt n\}\)
satisfies
\[
    \mathbb P(\mathcal E_n(L_n)^c)
    \le
    2\exp\left\{-\frac{nL_n^2}{4C_0^2}\right\}
    \le
    2\exp\{-c nL_n^2\}.
\]

Let \(C_{\rm align}\) be the universal constant in
Lemma~\ref{lem:subspace-alignment}, and set
\(\varepsilon_n:=\sqrt t/(4C_{\rm align}L_n)\).
Since \(0<t\le1\) and \(L_n\to\infty\), for all sufficiently large \(n\),
\(\varepsilon_n<1\), uniformly in \(t\). Let
\(\mathcal N_{\varepsilon_n}\) be an \(\varepsilon_n\)-net of
\(\mathrm{Gr}(k,q)\) in the projection metric.

Suppose that \(\lambda_{k,n}\le t\) and \(\mathcal E_n(L_n)\) both occur. Choose
\(E\in\mathrm{Gr}(k,q)\) and \(U_E\) as above. Then choose
\(F\in\mathcal N_{\varepsilon_n}\) such that
\(\|P_E-P_F\|_{\rm op}\le \varepsilon_n\).
Fix an orthonormal basis \(U_F\) of \(F\). By
Lemma~\ref{lem:subspace-alignment}, after changing the basis \(U_E\) inside
\(E\), we may assume
\(\|U_E-U_F\|_F\le C_{\rm align}\sqrt{k}\,\varepsilon_n\).
Therefore, on \(\mathcal E_n(L_n)\),
\[
\begin{aligned}
    \|G_nU_F\|_F
    \le
    \|G_nU_E\|_F
    +
    \|G_n(U_F-U_E)\|_F  &\le
    \sqrt{knt}
    +
    \|G_n\|_{\rm op}\|U_F-U_E\|_F \\
    &\le
    \sqrt{knt}
    +
    L_n\sqrt n\,
    C_{\rm align}\sqrt{k}\,\varepsilon_n =
    \sqrt{knt}
    +
    \frac14\sqrt{knt}
    \le
    2\sqrt{knt}.
\end{aligned}
\]
Thus
\[
    \{\lambda_{k,n}\le t\}\cap\mathcal E_n(L_n)
    \subseteq
    \bigcup_{F\in\mathcal N_{\varepsilon_n}}
    \left\{
        \|G_nU_F\|_F\le 2\sqrt{knt}
    \right\}.
\]
Consequently,
\[
    \mathbb P(\lambda_{k,n}\le t,\mathcal E_n(L_n))
    \le
    |\mathcal N_{\varepsilon_n}|
    \sup_{F\in\mathrm{Gr}(k,q)}
    \mathbb P\left(\|G_nU_F\|_F\le 2\sqrt{knt}\right).
\]

We now estimate the fixed-subspace probability. Fix
\(F\in\mathrm{Gr}(k,q)\) and an orthonormal basis
\(U_F\in\mathbb R^{q\times k}\). Vectorize \(G_n\) as
$
    X:=\operatorname{vec}(G_n)\in\mathbb R^{Nq}.
$
Set
$
    L_F:=U_F^\top\otimes I_N
$, then
$
    \operatorname{vec}(G_nU_F)
    =
    (U_F^\top\otimes I_N)\operatorname{vec}(G_n).
$

Since the columns of \(U_F\) are orthonormal,
\[
    L_F L_F^\top
    =
    (U_F^\top\otimes I_N)(U_F\otimes I_N)
    =
    (U_F^\top U_F)\otimes I_N
    =
    I_k\otimes I_N
    =
    I_{Nk}.
\]
Thus the rows of \(L_F\) are orthonormal. Hence
$
    P_F:=L_F^\top L_F
$
is the orthogonal projection in \(\mathbb R^{Nq}\) onto the row space of
\(L_F\), which has dimension \(Nk\).

Moreover, for every
\(x\in\mathbb R^{Nq}\),
$
    \|L_Fx\|_2^2
    =
    x^\top L_F^\top L_Fx
    =
    \|P_Fx\|_2^2
$,
thus the event
$
    \|\operatorname{vec}(G_nU_F)\|_2\le 2\sqrt{knt}
$
is equivalent to
$
    \|P_F\operatorname{vec}(G_n)\|_2\le 2\sqrt{knt}.
$
Since the coordinates of \(\operatorname{vec}(G_n)\)
are independent and have densities bounded by \(K_{\rm up}\),
Proposition~\ref{cor:distance-subspace-upper-small-ball}, applied in
\(\mathbb R^{Nq}\) to
\(H=\operatorname{range}(P_F)^\perp\), whose codimension is \(Nk\), with
radius \(2\sqrt{knt}\), gives
\[
\begin{aligned}
    \mathbb P\left(\|G_nU_F\|_F\le 2\sqrt{knt}\right)
    &=
    \mathbb P\left(
        \|\operatorname{vec}(G_nU_F)\|_2\le 2\sqrt{knt}
    \right) \le
    \left(
        \frac{C K_{\rm up}\sqrt{knt}}{\sqrt{Nk}}
    \right)^{Nk}  =
    \left(
        C K_{\rm up}\sqrt{\frac nN}\sqrt t
    \right)^{Nk}.
\end{aligned}
\]
Since \(N/n\) is bounded below by a positive constant, this becomes
\(\mathbb P(\|G_nU_F\|_F\le 2\sqrt{knt})\le(C\sqrt t)^{Nk}\),
after enlarging \(C\).

Next, by Lemma~\ref{lem:grassmannian-covering},
\[
    |\mathcal N_{\varepsilon_n}|
    \le
    \left(\frac{C}{\varepsilon_n}\right)^{k(q-k)}
    =
    \left(\frac{C L_n}{\sqrt t}\right)^{k(q-k)}.
\]
Combining the net bound and the fixed-subspace small-ball bound gives
\[
\begin{aligned}
    \mathbb P(\lambda_{k,n}\le t,\mathcal E_n(L_n))
    &\le
    \left(\frac{C L_n}{\sqrt t}\right)^{k(q-k)}
    (C\sqrt t)^{Nk} =
    C^{Nk+k(q-k)}
    L_n^{k(q-k)}
    t^{\frac12(Nk-k(q-k))}.
\end{aligned}
\]
The exponent of \(t\) simplifies as
\[
    Nk-k(q-k)
    =
    k(N-q+k)
    =
    k(r+k).
\]
Therefore
\[
    \mathbb P(\lambda_{k,n}\le t,\mathcal E_n(L_n))
    \le
    C^{Nk+k(q-k)}
    L_n^{k(q-k)}
    t^{k(r+k)/2}.
\]

It remains to absorb the prefactor. This step uses only the aspect-ratio
bounds, not any fixed value of \(k\). Since \(N,q=O(n)\),
\(Nk+k(q-k)\le Cnk\), so \(C^{Nk+k(q-k)}\le\exp\{Cnk\}\). Moreover, since
\(q\le Cn\), \(k(q-k)\log L_n\le Cnk\log L_n\). Hence
\(C^{Nk+k(q-k)}L_n^{k(q-k)}\le\exp\{Cnk(1+\log L_n)\}\).
We have proved
\[
    \mathbb P(\lambda_{k,n}\le t,\mathcal E_n(L_n))
    \le
    \exp\{Cnk(1+\log L_n)\}t^{k(r+k)/2}.
\]
Finally,
\[
\begin{aligned}
    \mathbb P(\lambda_{k,n}\le t)
    &\le
    \mathbb P(\lambda_{k,n}\le t,\mathcal E_n(L_n))
    +
    \mathbb P(\mathcal E_n(L_n)^c) \\
    &\le
    \exp\{Cnk(1+\log L_n)\}t^{k(r+k)/2}
    +
    2\exp\{-c nL_n^2\}.
\end{aligned}
\]

To prove the final statement, fix \(M>0\). Choose \(D_M>0\) sufficiently large
and set \(L_n=D_M\sqrt{\log n}\).
Then
\[
    1+\log L_n
    =
    1+\log D_M+\frac12\log\log n
    \le
    C_M\log\log n
\]
for all sufficiently large \(n\), while
\[
    2\exp\{-c nL_n^2\}
    =
    2\exp\{-cD_M^2 n\log n\}
    \le
    \exp\{-M n\log n\}
\]
provided \(D_M\) is chosen so that \(cD_M^2>M+1\).
This proves the proposition.
\end{proof}

\subsection{Left-tail auxiliary estimates}

\begin{lemma}[Continuity of the left-tail variational rate]
\label{lem:left-variational-rate-continuity}
The function \(s\mapsto \mathcal J_{\kappa,\rho}(s)\) is non-increasing and continuous
on \((0,\infty)\). Moreover,
\begin{equation}
\label{eq:left-strict-sublevel-rate}
    \mathcal J_{\kappa,\rho}(s)
    =
    \inf\{I_{\kappa,\rho}(\mu):T(\mu)<s\}.
\end{equation}
\end{lemma}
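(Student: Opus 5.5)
Monotonicity is immediate: for \(s<s'\) we have \(\{T\le s\}\subseteq\{T\le s'\}\), so \(\mathcal J_{\kappa,\rho}(s)\ge\mathcal J_{\kappa,\rho}(s')\). The rest rests on the scaling perturbation from Remark~\ref{rem:left-tail-continuity-enough}. For \(c>0\) and \(\mu\in\mathcal P(\mathbb R_+)\), set \(\mu^{(c)}:=(u\mapsto cu)_\#\mu\). Then \(T(\mu^{(c)})=c^{-1}T(\mu)\). Writing \(m_1(\mu)=\int u\,d\mu\), \(\ell(\mu)=\int\log u\,d\mu\) and \(\Sigma(\mu)=\iint\log|u-v|\,d\mu\,d\mu\), the rate function~\eqref{eq:wishart-rate-prelim} transforms explicitly:
\[
    I_{\kappa,\rho}(\mu^{(c)})
    =
    I_{\kappa,\rho}(\mu)
    +\frac{\kappa}{2}(c-1)m_1(\mu)
    -\frac{\kappa\rho}{2}\log c
    -\frac{\kappa^2}{2}\log c .
\]
So whenever \(I_{\kappa,\rho}(\mu)<\infty\), which forces \(m_1(\mu)<\infty\), the map \(c\mapsto I_{\kappa,\rho}(\mu^{(c)})\) is finite and continuous. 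In particular it tends to \(I_{\kappa,\rho}(\mu)\) as \(c\to1\). I will first check that finiteness of \(I_{\kappa,\rho}\) really controls \(m_1\), \(\ell\) and \(\Sigma\) separately. This is standard: the confining term \(u-\rho\log u\) dominates the logarithmic interaction at infinity, and \(\rho>0\) penalizes mass near \(0\).

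\emph{Strict sublevel identity \eqref{eq:left-strict-sublevel-rate}.} The inequality "\(\le\)" is trivial. For "\(\ge\)", take \(\mu\) with \(T(\mu)\le s\) and \(I_{\kappa,\rho}(\mu)<\infty\); if no such \(\mu\) exists, both sides equal \(+\infty\). For every \(c>1\), \(\mu^{(c)}\) satisfies \(T(\mu^{(c)})=T(\mu)/c<s\), because \(T(\mu)\le s<\infty\). Since \(I_{\kappa,\rho}(\mu^{(c)})\to I_{\kappa,\rho}(\mu)\) as \(c\downarrow1\), the strict-sublevel infimum is at most \(I_{\kappa,\rho}(\mu)\). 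Taking the infimum over such \(\mu\) gives the identity.

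\emph{Continuity.} I first show \(\mathcal J_{\kappa,\rho}<\infty\) on \((0,\infty)\). For any \(s>0\), take \(\mu=\mu_{\kappa,\rho}^{(c)}\) with \(c\) large enough that \(T(\mu)=h_{\kappa,\rho}/(\kappa c)\le s\); its rate is finite by the scaling formula.

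\emph{Right continuity} at \(s\) follows from goodness of the rate function. The sets \(\{T\le s+\varepsilon\}\) are closed, since \(T\) is a supremum of the continuous functionals \(T_M\) and hence lower semicontinuous. They decrease to \(\{T\le s\}\). A good rate function has compact sublevel sets, so the standard argument used in Proposition~\ref{prop:left-tail-upper} gives \(\mathcal J_{\kappa,\rho}(s+\varepsilon)\uparrow\mathcal J_{\kappa,\rho}(s)\).

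\emph{Left continuity} at \(s\): fix \(\varepsilon>0\) and choose \(\mu\) with \(T(\mu)\le s\) and \(I_{\kappa,\rho}(\mu)\le\mathcal J_{\kappa,\rho}(s)+\varepsilon\). For \(s'<s\), put \(c=s/s'\). Then \(T(\mu^{(c)})\le s'\), and therefore
\[
    \mathcal J_{\kappa,\rho}(s')\le I_{\kappa,\rho}(\mu^{(c)})\to I_{\kappa,\rho}(\mu)
    \quad\text{as } s'\uparrow s .
\]
Together with monotonicity, this yields \(\lim_{s'\uparrow s}\mathcal J_{\kappa,\rho}(s')=\mathcal J_{\kappa,\rho}(s)\).

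The main obstacle is justifying the scaling identity rigorously: splitting \(I_{\kappa,\rho}\) into the separate integrals \(m_1\), \(\ell\), \(\Sigma\) requires each to be finite. This will be handled by the integrability remark above. It is cleanest to prove it with the standard bound \(\log|u-v|\le\log(1+u)+\log(1+v)\).
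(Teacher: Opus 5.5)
Your proposal is correct and follows essentially the same route as the paper: monotonicity from nested constraint sets, the scaling perturbation \(\mu^{(c)}\) for the strict-sublevel identity, and goodness of \(I_{\kappa,\rho}\) plus lower semicontinuity of \(T\) for right-continuity. The differences are minor. You write out the scaling formula for \(I_{\kappa,\rho}(\mu^{(c)})\) and check finiteness of \(\mathcal J_{\kappa,\rho}\) explicitly, and you get left-continuity by scaling directly. The paper instead gets left-continuity from the increasing union \(\bigcup_m\{T\le s_m\}=\{T<s\}\) together with the strict-sublevel identity.
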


\begin{proof}[Proof of Lemma~\ref{lem:left-variational-rate-continuity}.]
Monotonicity is immediate because the constraint set \(\{T\le s\}\) increases
with \(s\).

We first prove right-continuity. Let \(s_m\downarrow s\). Since
\(\mathcal J_{\kappa,\rho}(s_m)\le \mathcal J_{\kappa,\rho}(s)\), it suffices to show
\[
    \mathcal J_{\kappa,\rho}(s)
    \le
    \liminf_{m\to\infty}\mathcal J_{\kappa,\rho}(s_m).
\]
If the right-hand side is \(+\infty\), there is nothing to prove. Otherwise,
choose \(\mu_m\) such that \(T(\mu_m)\le s_m\) and
\(I_{\kappa,\rho}(\mu_m)\le\mathcal J_{\kappa,\rho}(s_m)+1/m\).
Since \(I_{\kappa,\rho}\) is good, after passing to a subsequence we may assume
\(\mu_m\Rightarrow\mu\). Since \(T\) is the supremum of the bounded continuous
truncations of \(u^{-1}\), it is lower semicontinuous, and therefore
\(T(\mu)\le\liminf_{m\to\infty}T(\mu_m)\le s\),
and by lower semicontinuity of \(I_{\kappa,\rho}\),
\(I_{\kappa,\rho}(\mu)\le
\liminf_{m\to\infty}I_{\kappa,\rho}(\mu_m)\).
Therefore
\[
    \mathcal J_{\kappa,\rho}(s)
    \le
    I_{\kappa,\rho}(\mu)
    \le
    \liminf_{m\to\infty}\mathcal J_{\kappa,\rho}(s_m),
\]
which proves right-continuity.

Next we prove the strict-sublevel representation
\eqref{eq:left-strict-sublevel-rate}. The inequality ``\(\le\)'' is trivial.
For the reverse inequality, fix \(\mu\) with \(T(\mu)\le s\) and
\(I_{\kappa,\rho}(\mu)<+\infty\). For \(c>1\), let
\(\mu^{(c)}:=(u\mapsto cu)_\#\mu\). Then
\(T(\mu^{(c)})=c^{-1}T(\mu)<s\). Also,
\(\int u\,d\mu^{(c)}(u)=c\int u\,d\mu(u)\) and
\(\int \log u\,d\mu^{(c)}(u)=\log c+\int\log u\,d\mu(u)\), while
\[
    \iint\log|u-v|\,d\mu^{(c)}(u)d\mu^{(c)}(v)
    =
    \log c+
    \iint\log|u-v|\,d\mu(u)d\mu(v).
\]
Hence \(I_{\kappa,\rho}(\mu^{(c)})\to I_{\kappa,\rho}(\mu)\) as \(c\downarrow1\). Thus any
admissible measure for \(\{T\le s\}\) can be approximated in rate by measures
satisfying \(T<s\), proving \eqref{eq:left-strict-sublevel-rate}.

Finally, let \(s_m\uparrow s\). Then
\(\bigcup_{m\ge1}\{\mu:T(\mu)\le s_m\}=\{\mu:T(\mu)<s\}\).
Since the sets on the left are increasing,
\[
    \lim_{m\to\infty}\mathcal J_{\kappa,\rho}(s_m)
    =
    \inf\{I_{\kappa,\rho}(\mu):T(\mu)<s\}
    =
    \mathcal J_{\kappa,\rho}(s),
\]
where the last equality uses \eqref{eq:left-strict-sublevel-rate}. This proves
left-continuity and completes the proof.
\end{proof}

The proof of Proposition~\ref{prop:left-tail-lower} uses the following
approximation input to reduce the variational problem to compactly supported
piecewise-uniform measures.

\begin{lemma}[Piecewise-uniform approximation]
\label{lem:piecewise-uniform-approximation-left}
Let \(s>0\), and let \(\mu\in\mathcal P(\mathbb R_+)\) satisfy
\(T(\mu)<s\) and \(I_{\kappa,\rho}(\mu)<+\infty\). Then, for every
\(\varepsilon>0\), there exists a compactly supported piecewise-uniform
\(\nu\in\mathcal P(\mathbb R_+)\) such that
\[
    T(\nu)<s,
    \qquad
    I_{\kappa,\rho}(\nu)\le I_{\kappa,\rho}(\mu)+\varepsilon .
\]
\end{lemma}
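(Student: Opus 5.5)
The plan is a two-stage approximation: first truncate \(\mu\) to a compact interval \([m,M]\subset(0,\infty)\), then replace the truncated measure by its histogram on a fine grid. Each stage is checked against the three pieces of \(I_{\kappa,\rho}\) in \eqref{eq:wishart-rate-prelim}: the linear term \(\int(u-\rho\log u)\,d\mu\), the log-energy \(\Sigma(\mu):=\iint\log|u-v|\,d\mu(u)d\mu(v)\), and the functional \(T\). Since \(T(\mu)<s\) is strict, it suffices that \(T\) of the approximant converge to \(T(\mu)\) and that \(I_{\kappa,\rho}\) of the approximant satisfy \(\limsup\le I_{\kappa,\rho}(\mu)\).

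\emph{Step 1 (integrability).} First I would show that \(I_{\kappa,\rho}(\mu)<\infty\) forces \(\int u\,d\mu<\infty\), \(\int|\log u|\,d\mu<\infty\), and \(\iint|\log|u-v||\,d\mu\,d\mu<\infty\).
The logarithmic energy is bounded above using \(\log|u-v|\le\log(1+u)+\log(1+v)\). Since \(\rho>0\), the term \(-\rho\log u\) penalizes mass near \(0\), and \(u\) dominates at infinity. Together these give integrability of \(u\) and \(|\log u|\). The negative part of \(\log|u-v|\) is then integrable because \(I_{\kappa,\rho}(\mu)\) is finite.

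\emph{Step 2 (truncation).} Let \(\mu_{m,M}:=\mu(\cdot\mid[m,M])\). As \(m\downarrow0\) and \(M\uparrow\infty\), the normalizing mass tends to \(1\). Dominated convergence, using Step 1 and \(T(\mu)<\infty\), then gives \(\int(u-\rho\log u)\,d\mu_{m,M}\), \(\Sigma(\mu_{m,M})\) and \(T(\mu_{m,M})\) converging to the corresponding quantities for \(\mu\). Hence we may fix \(m,M\) with \(T(\mu_{m,M})<s\) and \(I_{\kappa,\rho}(\mu_{m,M})\le I_{\kappa,\rho}(\mu)+\varepsilon/2\). Rename \(\mu_{m,M}\) as \(\mu\).

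\emph{Step 3 (histogram).} For \(h>0\), partition \([m,M]\) into cells \(J\) of length \(\le h\). Let \(\nu_h\) put mass \(\mu(J)\) uniformly on each \(J\). The result is piecewise uniform and supported in \([m,M]\).
\begin{itemize}
\item The functions \(u\), \(\log u\) and \(u^{-1}\) are uniformly continuous on \([m,M]\). Coupling a \(\mu\)-distributed point with a \(\nu_h\)-distributed point in the same cell therefore shows that the linear terms and \(T(\nu_h)\) differ from those of \(\mu\) by \(o(1)\) as \(h\to0\). In particular \(T(\nu_h)<s\) for small \(h\).
\item It remains to show \(\liminf_{h\to0}\Sigma(\nu_h)\ge\Sigma(\mu)\). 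This is the only inequality in the right direction that is needed, since \(I_{\kappa,\rho}\) contains \(-\Sigma\).
\end{itemize}

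\emph{Step 4 (energy, the main obstacle).} Split the pairs of cells \((J,J')\) into far pairs, with \(\operatorname{dist}(J,J')\ge h^{1/2}\), and near pairs.
\begin{itemize}
\item For far pairs, \(\log|u-v|\) varies by at most \(O(h/h^{1/2})=O(h^{1/2})\) over \(J\times J'\). So the far-pair contributions of \(\nu_h\otimes\nu_h\) and \(\mu\otimes\mu\) differ by \(O(h^{1/2})\).
\item For near pairs, all points satisfy \(|u-v|\le 3h^{1/2}\le1\). The contribution of \(\mu\otimes\mu\) is \(\le0\) and tends to \(0\) by Step 1, since it is the integral of \(|\log|u-v||\) over a shrinking neighbourhood of the diagonal.
\item For \(\nu_h\), an elementary computation with uniform densities gives \(\int_J\int_{J'}\log|u-v|\,\frac{du}{|J|}\frac{dv}{|J'|}\ge\log h-C\) for any two cells.
\item Consequently the near-pair contribution of \(\nu_h\) is at least \((\log h-C)\,\mu\otimes\mu\{|u-v|\le 3h^{1/2}\}\). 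This quantity is bounded in absolute value by \(C\iint_{|u-v|\le3h^{1/2}}(1+|\log|u-v||)\,d\mu\,d\mu\), using \(|\log h|\le 2|\log|u-v||+C\) on that region. By Step 1 it tends to \(0\).
\end{itemize}
Hence \(\Sigma(\nu_h)\ge\Sigma(\mu)-o(1)\), and choosing \(h\) small gives \(I_{\kappa,\rho}(\nu_h)\le I_{\kappa,\rho}(\mu)+\varepsilon\) together with \(T(\nu_h)<s\).

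The near-diagonal lower bound in Step 4 is the delicate point. If the cellwise bound \(\log h-C\) proves awkward for adjacent cells of unequal length, I would first refine the partition to equal-length cells, and fall back on the explicit integral \(\int_0^1\int_0^1\log|x-y+a|\,dx\,dy\ge -3/2\) for \(a\ge0\) after rescaling.
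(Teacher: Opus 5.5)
Your proposal is correct, and its second stage differs from the paper's. Steps 1--2 match the paper's truncation to \([m,M]\).

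\textbf{Where the paper goes next.} The paper inserts a mollification stage before histogramming. It convolves the truncated measure with \(\varphi_h\), so the support moves into \([m/2,2M]\). It proves \(\mathcal L(\bar\mu_h)\to\mathcal L(\bar\mu)\) by dominated convergence, using the pointwise bound \(\mathbb E_{U,V}[-\log|z+h(U-V)|]^+\le C+[-\log|z|]^+\). Only then does it take cell averages of the now bounded density. At that point, convergence of the logarithmic energy follows from the uniform boundedness of the densities \(f_L\) together with the truncated kernel \(K_A\).

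\textbf{What you do instead.} You histogram the truncated measure directly; it may be singular, but it is atomless with finite energy. Your intermediate scale \(h^{1/2}\) replaces the paper's bounded density.
\begin{itemize}
\item On far pairs, the kernel has oscillation \(O(h^{1/2})\), so the two energies differ by \(O(h^{1/2})\).
\item On the near region \(\{|u-v|\le 3h^{1/2}\}\), the crude cellwise bound \(\log h-C\) is affordable because \(|\log h|\le 2|\log|u-v||+C\) there.
\item The loss is therefore controlled by the near-diagonal part of \(\log|u-v|\in L^1(\mu\otimes\mu)\), which vanishes as \(h\to0\).
\end{itemize}

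\textbf{Comparison.} Your route saves a whole approximation stage and keeps the support in \([m,M]\). It also yields an explicit quantitative estimate, \(\Sigma(\nu_h)\ge\Sigma(\mu)-2h^{1/2}-C\iint_{|u-v|\le 3h^{1/2}}(1+|\log|u-v||)\,d\mu\,d\mu\). The paper's route is more modular: each stage is a routine continuity argument, at the price of an extra dominated-convergence step for the convolution.

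\textbf{One point to fix.} Do not leave the equal-length partition as a fallback; it is needed. The cellwise bound \(\int_J\int_{J'}\log|u-v|\,\frac{du}{|J|}\frac{dv}{|J'|}\ge\log h-C\) requires cell lengths bounded below by a multiple of \(h\), not only above. A cell of length \(\tau\ll h\) gives \(\log\tau-3/2\) when \(J=J'\). So take equal-length cells of \([m,M]\) from the start, with lengths in \([h/2,h]\). With that choice, the symmetric-decreasing comparison (or your explicit integral \(\ge -3/2\)) gives the bound for every pair of cells.
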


\begin{proof}[Proof of Lemma~\ref{lem:piecewise-uniform-approximation-left}.]
Here, piecewise uniform means that
\(d\nu(u)=\sum_{\ell=1}^L c_\ell\mathbf 1_{J_\ell}(u)\,du\) for disjoint
intervals \(J_1,\ldots,J_L\subset[m,M]\subset(0,\infty)\) and constants
\(c_1,\ldots,c_L\ge0\). Write
\(V_{\kappa,\rho}(u):=u-\rho\log u\) and
\(\mathcal L(\eta):=\iint \log|u-v|\,d\eta(u)d\eta(v)\).
Then
\[
    I_{\kappa,\rho}(\eta)
    =
    \frac{\kappa}{2}\int V_{\kappa,\rho}\,d\eta
    -
    \frac{\kappa^2}{2}\mathcal L(\eta)
    -C_{\kappa,\rho} .
\]
Since \(I_{\kappa,\rho}(\mu)<+\infty\), both \(\int V_{\kappa,\rho}\,d\mu\) and
\(\mathcal L(\mu)\) are finite in the sense required by the rate function. In
particular, \(\mu\) has no atoms, because any atom would make
\(\mathcal L(\mu)=-\infty\).

\paragraph{Truncation away from zero and infinity.} For
\(0<m<M<\infty\), set \(K_{m,M}:=[m,M]\) and
\(\theta_{m,M}:=\mu(K_{m,M})\); whenever \(\theta_{m,M}>0\), set
\(\mu_{m,M}:=\mu|_{K_{m,M}}/\theta_{m,M}\). As \(m\downarrow0\) and
\(M\uparrow\infty\), \(\theta_{m,M}\to1\) and
\(\mu_{m,M}\Rightarrow\mu\). Moreover, since \(T(\mu)<\infty\),
\(T(\mu_{m,M})\to T(\mu)\), and
\(\int V_{\kappa,\rho}\,d\mu_{m,M}\to
\int V_{\kappa,\rho}\,d\mu\).
It remains to justify convergence of the logarithmic energy. Since
    $\log^+|u-v|
    \le
    \log(1+u)+\log(1+v)
    \le
    u+v$,
the positive part of the logarithmic kernel is integrable under
\(\mu\otimes\mu\). The negative part is integrable because
\(\mathcal L(\mu)>-\infty\). Hence \(\log|u-v|\in L^1(\mu\otimes\mu)\). By
absolute continuity of the integral,
\[
    \iint_{K_{m,M}^2}\log|u-v|\,d\mu(u)d\mu(v)
    \to
    \mathcal L(\mu).
\]
After normalization by \(\theta_{m,M}^{-2}\), this gives
\(\mathcal L(\mu_{m,M})\to\mathcal L(\mu)\), and consequently
\(I_{\kappa,\rho}(\mu_{m,M})\to I_{\kappa,\rho}(\mu)\). Because \(T(\mu)<s\),
we can choose \(m,M\) so that \(T(\mu_{m,M})<s\) and
\(I_{\kappa,\rho}(\mu_{m,M})\le I_{\kappa,\rho}(\mu)+\varepsilon/3\).
Fix such \(m,M\), and write \(\bar\mu:=\mu_{m,M}\).

\paragraph{Smoothing on a compact interval.} Let \(\varphi\) be a smooth
probability density supported on \([-1,1]\), and put
\(\varphi_h(u):=h^{-1}\varphi(u/h)\), where \(0<h<m/4\).
Define \(\bar\mu_h:=\bar\mu*\varphi_h\). Then \(\bar\mu_h\) has a smooth
density and \(\operatorname{supp}(\bar\mu_h)\subset[m-h,M+h]\subset[m/2,2M]\).
Since \(u\mapsto u^{-1}\) and \(V_{\kappa,\rho}\) are bounded and uniformly continuous
on \([m/2,2M]\), \(T(\bar\mu_h)\to T(\bar\mu)\) and
\(\int V_{\kappa,\rho}\,d\bar\mu_h\to\int V_{\kappa,\rho}\,d\bar\mu\). We
claim that \(\mathcal L(\bar\mu_h)\to\mathcal L(\bar\mu)\).
Let \(X,Y\) be independent with law \(\bar\mu\), and let \(U,V\) be independent
with density \(\varphi\), independent of \(X,Y\). Then
\(\mathcal L(\bar\mu_h)=\mathbb E\log|X-Y+h(U-V)|\).
For fixed \(X\ne Y\), the integrand converges to \(\log|X-Y|\). Moreover,
since \(\bar\mu\) has finite logarithmic energy and no atoms, the singular part
is uniformly integrable. More explicitly, for \(h\le m/4\),
\[
    \mathbb E_{U,V}\bigl[-\log|z+h(U-V)|\bigr]^+
    \le
    C+[-\log|z|]^+,
    \qquad z\in\mathbb R,
\]
where \(C\) depends only on \(\varphi\). The right-hand side is integrable with
respect to the law of \(X-Y\), by the finite logarithmic energy of \(\bar\mu\).
The positive part of the logarithm is uniformly bounded on \([m/2,2M]^2\). Thus
dominated convergence gives \(\mathcal L(\bar\mu_h)\to\mathcal L(\bar\mu)\).
Hence \(I_{\kappa,\rho}(\bar\mu_h)\to I_{\kappa,\rho}(\bar\mu)\). Choosing
\(h>0\) sufficiently small, \(T(\bar\mu_h)<s\) and
\(I_{\kappa,\rho}(\bar\mu_h)\le I_{\kappa,\rho}(\bar\mu)+\varepsilon/3\).

\paragraph{Approximation by a piecewise-constant density.} Let \(f_h\) be
the smooth density of \(\bar\mu_h\), supported on \([m/2,2M]\). Partition
\([m/2,2M]\) into finitely many intervals \(J_1,\ldots,J_L\) with mesh size
tending to zero. Let \(f_L\) be the cell-average approximation
\[
    f_L(u)
    :=
    \sum_{\ell=1}^L
    \left(
        \frac1{|J_\ell|}\int_{J_\ell}f_h(v)\,dv
    \right)\mathbf 1_{J_\ell}(u),
\]
and define \(d\nu_L(u):=f_L(u)\,du\). Then \(\nu_L\) is piecewise uniform,
\(\nu_L\Rightarrow\bar\mu_h\), and \(\|f_L-f_h\|_{L^1}\to0\).
Since \(u^{-1}\) and \(V_{\kappa,\rho}\) are bounded continuous on \([m/2,2M]\),
\(T(\nu_L)\to T(\bar\mu_h)\) and
\(\int V_{\kappa,\rho}\,d\nu_L\to\int V_{\kappa,\rho}\,d\bar\mu_h\).
For the logarithmic energy, note that the densities \(f_L\) are uniformly
bounded for all sufficiently large \(L\), because \(f_h\) is bounded. Fix
\(A>0\) and define the truncated continuous kernel
\(K_A(u,v):=\max\{\log|u-v|,-A\}\).
Then
\[
    \iint K_A(u,v)\,d\nu_L(u)d\nu_L(v)
    \to
    \iint K_A(u,v)\,d\bar\mu_h(u)d\bar\mu_h(v).
\]
The error between \(K_A\) and \(\log|u-v|\) is supported on
\(|u-v|\le e^{-A}\). Since all densities are uniformly bounded,
\[
    \sup_L
    \iint_{|u-v|\le e^{-A}}
    |\log|u-v||\,d\nu_L(u)d\nu_L(v)
    \le
    C\iint_{|u-v|\le e^{-A}}|\log|u-v||\,du\,dv
    \to0
\]
as \(A\to\infty\). The same bound holds for \(\bar\mu_h\). Therefore
\(\mathcal L(\nu_L)\to\mathcal L(\bar\mu_h)\). Thus
\(I_{\kappa,\rho}(\nu_L)\to I_{\kappa,\rho}(\bar\mu_h)\).
Taking \(L\) sufficiently large, we obtain a piecewise-uniform measure
\(\nu:=\nu_L\) such that
\[
    T(\nu)<s,
    \qquad
    I_{\kappa,\rho}(\nu)
    \le
    I_{\kappa,\rho}(\bar\mu_h)+\frac{\varepsilon}{3}
    \le
    I_{\kappa,\rho}(\mu)+\varepsilon.
\]
This proves the lemma.
\end{proof}

\bibliographystyle{plainnat}
\bibliography{ref}

@article{hastie2022surprises,
  title={Surprises in high-dimensional ridgeless least squares interpolation},
  author={Hastie, Trevor and Montanari, Andrea and Rosset, Saharon and Tibshirani, Ryan J},
  journal={The Annals of Statistics},
  volume={50},
  number={2},
  pages={949--986},
  year={2022}
}

@article{vaaler1979geometric,
  title={A geometric inequality with applications to linear forms},
  author={Vaaler, Jeffrey D},
  journal={Pacific Journal of Mathematics},
  volume={83},
  number={2},
  pages={543--553},
  year={1979},
  publisher={Mathematical Sciences Publishers}
}

@article{rogers1958packing,
  title={The packing of equal spheres},
  author={Rogers, C. A.},
  journal={Proceedings of the London Mathematical Society},
  volume={s3-8},
  number={4},
  pages={609--620},
  year={1958},
  publisher={Oxford University Press}
}

@inproceedings{varadhan2010large,
  title={Large deviations},
  author={Varadhan, S. R. S.},
  booktitle={Proceedings of the International Congress of Mathematicians},
  editor={Bhatia, Rajendra},
  volume={I: Plenary lectures and ceremonies},
  pages={622--639},
  year={2010},
  publisher={Hindustan Book Agency},
  address={New Delhi}
}

@book{dembo2009large,
  title={Large Deviations Techniques and Applications},
  author={Dembo, Amir and Zeitouni, Ofer},
  series={Stochastic Modelling and Applied Probability},
  volume={38},
  edition={2nd},
  year={2010},
  publisher={Springer Berlin Heidelberg}
}

@article{artzner1999coherent,
  title={Coherent measures of risk},
  author={Artzner, Philippe and Delbaen, Freddy and Eber, Jean-Marc and Heath, David},
  journal={Mathematical finance},
  volume={9},
  number={3},
  pages={203--228},
  year={1999},
  publisher={Wiley Online Library}
}

@article{duffie1997overview,
  title={An overview of value at risk},
  author={Duffie, Darrell and Pan, Jun},
  journal={The Journal of Derivatives},
  volume={4},
  number={3},
  pages={7--49},
  year={1997}
}

@article{zhang2021understanding,
  title={Understanding deep learning (still) requires rethinking generalization},
  author={Zhang, Chiyuan and Bengio, Samy and Hardt, Moritz and Recht, Benjamin and Vinyals, Oriol},
  journal={Communications of the ACM},
  volume={64},
  number={3},
  pages={107--115},
  year={2021},
  publisher={ACM New York, NY, USA}
}

@article{mallinar2022benign,
  title={Benign, tempered, or catastrophic: Toward a refined taxonomy of overfitting},
  author={Mallinar, Neil and Simon, James and Abedsoltan, Amirhesam and Pandit, Parthe and Belkin, Misha and Nakkiran, Preetum},
  journal={Advances in neural information processing systems},
  volume={35},
  pages={1182--1195},
  year={2022}
}

@inproceedings{cheng2022memorize,
  title={Memorize to generalize: on the necessity of interpolation in high dimensional linear regression},
  author={Cheng, Chen and Duchi, John and Kuditipudi, Rohith},
  booktitle={Conference on Learning Theory},
  pages={5528--5560},
  year={2022},
  organization={PMLR}
}

@article{mei2022generalization,
  title={The generalization error of random features regression: Precise asymptotics and the double descent curve},
  author={Mei, Song and Montanari, Andrea},
  journal={Communications on Pure and Applied Mathematics},
  volume={75},
  number={4},
  pages={667--766},
  year={2022},
  publisher={Wiley Online Library}
}

@article{bartlett2020benign,
  title={Benign overfitting in linear regression},
  author={Bartlett, Peter L and Long, Philip M and Lugosi, G{\'a}bor and Tsigler, Alexander},
  journal={Proceedings of the National Academy of Sciences},
  volume={117},
  number={48},
  pages={30063--30070},
  year={2020},
  publisher={National Academy of Sciences}
}

@article{cao2022benign,
  title={Benign overfitting in two-layer convolutional neural networks},
  author={Cao, Yuan and Chen, Zixiang and Belkin, Misha and Gu, Quanquan},
  journal={Advances in neural information processing systems},
  volume={35},
  pages={25237--25250},
  year={2022}
}

@article{belkin2021fit,
  title={Fit without fear: remarkable mathematical phenomena of deep learning through the prism of interpolation},
  author={Belkin, Mikhail},
  journal={Acta Numerica},
  volume={30},
  pages={203--248},
  year={2021},
  publisher={Cambridge University Press}
}

@article{belkin2019reconciling,
  title={Reconciling modern machine-learning practice and the classical bias--variance trade-off},
  author={Belkin, Mikhail and Hsu, Daniel and Ma, Siyuan and Mandal, Soumik},
  journal={Proceedings of the National Academy of Sciences},
  volume={116},
  number={32},
  pages={15849--15854},
  year={2019},
  publisher={National Academy of Sciences}
}

@article{belkin2018overfitting,
  title={Overfitting or perfect fitting? risk bounds for classification and regression rules that interpolate},
  author={Belkin, Mikhail and Hsu, Daniel J and Mitra, Partha P},
  journal={Advances in neural information processing systems},
  volume={31},
  pages={2306--2317},
  year={2018}
}

@article{rudelson2009smallest,
  title={The smallest singular value of a random rectangular matrix},
  author={Rudelson, Mark and Vershynin, Roman},
  journal={Communications on Pure and Applied Mathematics: A Journal Issued by the Courant Institute of Mathematical Sciences},
  volume={62},
  number={12},
  pages={1707--1739},
  year={2009},
  publisher={Wiley Online Library}
}

@book{vershynin2020high,
  title={High-Dimensional Probability: An Introduction with Applications in Data Science},
  author={Vershynin, Roman},
  series={Cambridge Series in Statistical and Probabilistic Mathematics},
  volume={58},
  edition={2nd},
  year={2026},
  publisher={Cambridge University Press}
}

@article{nguyen2018random,
  title={Random matrices: Overcrowding estimates for the spectrum},
  author={Nguyen, Hoi H},
  journal={Journal of functional analysis},
  volume={275},
  number={8},
  pages={2197--2224},
  year={2018},
  publisher={Elsevier}
}

@article{dobriban2018high,
  title={High-dimensional asymptotics of prediction: Ridge regression and classification},
  author={Dobriban, Edgar and Wager, Stefan},
  journal={The Annals of Statistics},
  volume={46},
  number={1},
  pages={247--279},
  year={2018},
  publisher={JSTOR}
}

@book{hiai2000semicircle,
  title={The semicircle law, free random variables and entropy},
  author={Hiai, Fumio and Petz, D{\'e}nes},
  series={Mathematical Surveys and Monographs},
  volume={77},
  year={2000},
  publisher={American Mathematical Society}
}

@article{rudelson2015small,
  title={Small ball probabilities for linear images of high-dimensional distributions},
  author={Rudelson, Mark and Vershynin, Roman},
  journal={International Mathematics Research Notices},
  volume={2015},
  number={19},
  pages={9594--9617},
  year={2015},
  publisher={Oxford University Press}
}

@article{marvcenko1967distribution,
  title={Distribution of eigenvalues for some sets of random matrices},
  author={Mar{\v{c}}enko, Vladimir A and Pastur, Leonid Andreevich},
  journal={Mathematics of the USSR-Sbornik},
  volume={1},
  number={4},
  pages={457--483},
  year={1967}
}

@book{bai2010spectral,
  title={Spectral analysis of large dimensional random matrices},
  author={Bai, Zhidong and Silverstein, Jack W},
  series={Springer Series in Statistics},
  edition={2nd},
  year={2010},
  publisher={Springer}
}

@article{pillai2014universality,
  title={Universality of covariance matrices},
  author={Pillai, Natesh S and Yin, Jun},
  journal={The Annals of Applied Probability},
  volume={24},
  number={3},
  pages={935--1001},
  year={2014},
  publisher={JSTOR}
}

@article{james1964distributions,
  title={Distributions of matrix variates and latent roots derived from normal samples},
  author={James, Alan T},
  journal={The Annals of Mathematical Statistics},
  volume={35},
  number={2},
  pages={475--501},
  year={1964},
  publisher={Institute of Mathematical Statistics}
}

@article{hiai1998eigenvalue,
  title={Eigenvalue density of the Wishart matrix and large deviations},
  author={Hiai, Fumio and Petz, D{\'e}nes},
  journal={Infinite Dimensional Analysis, Quantum Probability and Related Topics},
  volume={1},
  number={4},
  pages={633--646},
  year={1998},
  publisher={World Scientific}
}

@book{forrester2010log,
  title={Log-Gases and Random Matrices},
  author={Forrester, Peter J},
  series={London Mathematical Society Monographs},
  volume={34},
  year={2010},
  publisher={Princeton University Press}
}

@article{ledoux2006concentration,
  title={Concentration of measure and logarithmic Sobolev inequalities},
  author={Ledoux, Michel},
  journal={S{\'e}minaire de probabilit{\'e}s},
  volume={33},
  pages={120--216},
  year={1999},
  publisher={Springer}
}

@incollection{pajor1998metric,
  title={Metric entropy of the Grassmann manifold},
  author={Pajor, Alain},
  booktitle={Convex Geometric Analysis},
  editor={Ball, Keith M and Milman, Vitali D},
  series={Mathematical Sciences Research Institute Publications},
  volume={34},
  pages={181--188},
  year={1998},
  publisher={Cambridge University Press}
}

@article{han2026distribution,
  title={The distribution of ridgeless least squares interpolators},
  author={Han, Qiyang and Xu, Xiaocong},
  journal={Journal of Machine Learning Research},
  volume={27},
  number={23},
  pages={1--94},
  year={2026}
}

@article{smith2006optimizer,
  title={The optimizer’s curse: Skepticism and postdecision surprise in decision analysis},
  author={Smith, James E and Winkler, Robert L},
  journal={Management Science},
  volume={52},
  number={3},
  pages={311--322},
  year={2006},
  publisher={INFORMS}
}

@article{bertsimas2020predictive,
  title={From predictive to prescriptive analytics},
  author={Bertsimas, Dimitris and Kallus, Nathan},
  journal={Management Science},
  volume={66},
  number={3},
  pages={1025--1044},
  year={2020},
  publisher={INFORMS}
}

@article{elmachtoub2022smart,
  title={Smart “predict, then optimize”},
  author={Elmachtoub, Adam N and Grigas, Paul},
  journal={Management Science},
  volume={68},
  number={1},
  pages={9--26},
  year={2022},
  publisher={INFORMS}
}

@inproceedings{donti2017task,
  title={Task-based end-to-end model learning in stochastic optimization},
  author={Donti, Priya L and Amos, Brandon and Kolter, J. Zico},
  booktitle={Advances in Neural Information Processing Systems},
  volume={30},
  pages={5484--5494},
  year={2017}
}

@article{edelman1988eigenvalues,
  title={Eigenvalues and condition numbers of random matrices},
  author={Edelman, Alan},
  journal={SIAM journal on matrix analysis and applications},
  volume={9},
  number={4},
  pages={543--560},
  year={1988},
  publisher={SIAM}
}

@article{hill1975simple,
  title={A simple general approach to inference about the tail of a distribution},
  author={Hill, Bruce M},
  journal={The Annals of Statistics},
  volume={3},
  number={5},
  pages={1163--1174},
  year={1975},
  publisher={JSTOR}
}

@article{tao2010random,
  title={Random matrices: The distribution of the smallest singular values},
  author={Tao, Terence and Vu, Van},
  journal={Geometric and Functional Analysis},
  volume={20},
  number={1},
  pages={260--297},
  year={2010},
  publisher={Springer}
}

@article{katzav2010large,
  title={Large deviations of the smallest eigenvalue of the Wishart-Laguerre ensemble},
  author={Katzav, Eytan and P{\'e}rez Castillo, Isaac},
  journal={Physical Review E},
  volume={82},
  number={4},
  pages={040104},
  year={2010},
  publisher={APS}
}

@article{matsumoto2012general,
  title={General moments of the inverse real Wishart distribution and orthogonal Weingarten functions},
  author={Matsumoto, Sho},
  journal={Journal of Theoretical Probability},
  volume={25},
  number={3},
  pages={798--822},
  year={2012},
  publisher={Springer}
}

@article{kumari2017moments,
  title={Moments of inverses of $(m,n,\beta)$-{Laguerre} matrices},
  author={Kumari, Sushma},
  journal={arXiv preprint arXiv:1704.06878},
  year={2017}
}

@article{mirsky1957inequalities,
author = {Mirsky, Leon},
title = {Inequalities for normal and {Hermitian} matrices},
journal = {Duke Mathematical Journal},
volume = {24},
number = {4},
pages = {591--599},
year = {1957}
}

@article{li1999lidskii,
author = {Li, Chi-Kwong and Mathias, Roy},
title = {The {Lidskii--Mirsky--Wielandt} theorem: additive and multiplicative versions},
journal = {Numerische Mathematik},
volume = {81},
pages = {377--413},
year = {1999}
}

@article{tao2010random2,
  author  = {Tao, Terence and Vu, Van and Krishnapur, Manjunath},
  title   = {Random matrices: Universality of {ESDs} and the circular law},
  journal = {The Annals of Probability},
  volume  = {38},
  number  = {5},
  pages   = {2023--2065},
  year    = {2010}
}

@article{chen2026abundant,
  title={How abundant are good interpolators?},
  author={Chen, August Y. and {El Alaoui}, Ahmed},
  journal={arXiv preprint arXiv:2606.06469},
  year={2026}
}

\end{document}